\DeclareMathAlphabet{\pazocal}{OMS}{zplm}{m}{n}
\def\BState{\State\hskip-\ALG@thistlm}
\newcommand{\lJump}{[\![}
\newcommand{\rJump}{]\!]}
\begin{document}

\title{A stable discontinuous Galerkin  method for the perfectly matched layer for elastodynamics in first order form
}

\titlerunning{A stable DG  method for the PML for elastodynamics in first order form}        

\author{Kenneth Duru         \and Leonhard Rannabauer \and Alice-Agnes Gabriel  \and  Gunilla Kreiss  \and  Michael Bader
}


\institute{Kenneth Duru \at
              Mathematical Sciences Institute, Australian National University, Canberra, Australia \\
              Tel.: +61 2 6125 4552\\
              \email{kenneth.duru@anu.edu.au}           
           \and
           Leonhard Rannabauer \at
              Technical University of Munich, Germany
              \and
              Alice-Agnes Gabriel \at
              Department of Geophysics, Ludwig-Maximilian University, Munich, Germany
               \and 
        Gunilla Kreiss \at
              Division of Scientific Computing, Department of Information Technology, Uppsala University, Sweden
               \and
           Michael Bader \at
              Technical University of Munich, Germany
}

\date{Received: date / Accepted: date}

\maketitle

\begin{abstract}
We present a stable  discontinuous Galerkin (DG)  method  with a perfectly matched layer (PML) for three and two space dimensional  linear elastodynamics, in velocity-stress formulation, subject to well-posed linear boundary conditions. 

 First, we consider the elastodynamics equation, in a  cuboidal domain, and derive an unsplit PML truncating the domain using complex coordinate stretching.
Leveraging the hyperbolic structure of the underlying system,  we construct continuous energy  estimates, in the time domain for the elastic wave equation, and in the Laplace space for a sequence of PML model problems, with variations in one, two and three space dimensions, respectively. They correspond to PMLs normal to  boundary faces, along edges and in corners.


Second, we develop a DG numerical method for the linear elastodynamics equation  using physically motivated numerical flux and penalty parameters, which are compatible with all well-posed, internal and external, boundary conditions. When the PML damping vanishes, by construction, our choice of penalty parameters yield an upwind scheme and a discrete energy estimate analogous to the continuous energy estimate. 

Third,  to ensure numerical stability of the discretization when PML damping is present, it is necessary to extend the  numerical DG fluxes, and the numerical  inter-element and boundary  procedures, to the PML auxiliary differential equations. This is crucial  for deriving discrete energy estimates analogous to the continuous energy estimates.   
Numerical solutions are evolved in time using the high order arbitrary  derivative (ADER)  time stepping  scheme of the same order of accuracy with the spatial discretization.  By combining the DG spatial approximation with the high order ADER  time stepping  scheme and the accuracy of the PML we obtain an arbitrarily  accurate wave propagation solver in the time domain.  

Numerical experiments are presented in two and three space dimensions corroborating the theoretical results.  
%
\keywords{
elastic waves \and
 first order systems \and 
 perfectly matched layer \and
 Laplace transforms \and
 boundary and interface conditions \and
 stability \and 
 high order accuracy \and 
 discontinuous Galerkin method 
 }
 \subclass{35F55 \and 35F46 \and 65M60 \and 65M70 \and 65M12 \and 65M15}
\end{abstract}

\section{Introduction}
\label{intro}
Computational strategies based on the discontinuous Galerkin method (DG method)  are  desirable for large scale numerical simulation of wave phenomena occurring in many applications \cite{Duru_exhype_2_2019,HesthavenWarburton2002,Carsten_EtAL2018,PeltiesdelaPuenteAmpueroBrietzkeKaser2012,DumbserKaser2006}.
However, one of the main  features of propagating waves is that they can propagate long distances relative to their characteristic dimension, the wavelength.
As an example, seismic waves generated by  events that occurred in one continent can be recorded,  far away, in a distant continent, at some later time.
For numerical simulations, it is precisely this essential feature of waves, the radiation of waves to far field, that leads to the greatest difficulties, \cite{Engquist77,Berenger1994,Givoli2004,Hagstrom2003,Duruthesis2012}.
In truncated computational domains, this manifests itself as spurious reflections of outgoing waves at artificial boundaries, which will travel back into the simulation domain and destroy the accuracy of numerical simulations everywhere. Therefore, in order to ensure the accuracy of DG methods, artificial boundaries introduced to limit the computational domain must be closed with reliable and accurate boundary conditions.

The effort to design efficient domain truncation schemes for propagating waves began about forty years ago \cite{Engquist77}  and has evolved over time to become an entire area of research \cite{Givoli2004,Duruthesis2012}. One of the major outcomes of this effort is the perfectly matched layer (PML), invented by \cite{Berenger1994}  in 1994 for Maxwell's equations in electromagnetics, and has been extended to many other  equations whose solutions are composed of waves.
The PML transforms the underlying hyperbolic partial differential equation (PDE) such that all spatially oscillating solutions decay exponentially in space. This allows the PML to effectively absorb all outgoing waves without reflections, independent of angle of incidence and frequency. This desirable property makes the PML so efficient and attractive to be used in  numerical simulation of absorption of waves. However, the PML transformation  has some important mathematical and numerical consequences which has limited its use in many practical computations. 

One, although   the PML solutions decay exponentially in space, for general systems, there is no guarantee that all solutions will decay in time. This is undesirable, since any growth in the PML can propagate into the simulation domain and pollute the numerical solution. For the past twenty years,  the mathematical analysis, of well-posedness and stability, of the PML has  been an area of active research \cite{Duruthesis2012,Be_etAl,SkAdCr,AppeloKreiss2006,DuruGabKreiss2019,HalpernPetit-BergezRauch2011,DiazJoly2006}. 
Most of these analyses are based on the use of  classical Fourier methods for the PML initial value problem (IVP).  An important stability result, {\em the geometric stability condition},  was introduced in \cite{Be_etAl} to characterize the temporal stability of  PML IVPs.
 If this condition is not satisfied, then there are modes of high spatial frequencies with temporally growing amplitudes. 
 
Two, further complicating the matter, the PML is derived in the continuous setting in the absence of any boundary conditions. However, in any practical setting the PML is a layer of finite thickness surrounding a truncated domain, and can in addition interact with boundary conditions. Thus, a more complete stability analysis of the PML in truncated domains must include the underlying boundary conditions present. 
 For IVPs that satisfy the geometric stability condition, initially, it was that the introduction of boundary conditions can ruin the stability or well-posedness of the PML  \cite{SkAdCr}. 
 We note, however, that the PML stability analysis has been extended to PML initial boundary value problems (IBVPs), using normal modes analysis \cite{Duruthesis2012,DuKrSIAM,KDuru2016,DuruKozdonKreiss2016}. The central result of these analyses is: as long as the underlying undamped IBVP (without the PML) is stable, the PML IBVP is stable if the IVP satisfies the geometric stability condition.

Three, for symmetric or symmetrizable hyperbolic PDEs, the PML is generally asymmetric. It becomes extremely difficult to derive energy estimates (in the time domain) that can be used to design stable numerical methods for the PML in truncated domains. Therefore, even when the geometric stability condition is satisfied, numerical experiments have also shown that the PML can support growth. 
For the DG method and PML for elastodynamics, the growth can be catastrophic \cite{XieKomatitschMartinMatzen}, destroying the accuracy of numerical simulations.
Some of our recent results \cite{KDuru2016,DuruKozdonKreiss2016} have revealed the impact of numerical boundary procedures on the stability of discrete PMLs, using high order summation-by-parts (SBP) finite difference method. For DG methods, the main difficulty is that classical $L_2$-energy norms traditionally used to develop stable DG numerical fluxes and approximations are not directly applicable when the PML is active.

For time-dependent problems, the PML is often derived in the transform (Laplace or Fourier in time) space, and then transformed back to the time domain. In the Laplace space, for the acoustic wave equation and Maxwell's equation, it is possible to derive energy estimates for the variable coefficients PML \cite{DuruGabKreiss2019,KDuru2016} in the continuous setting.  By mimicking this energy estimate it is also possible to design an energy stable DG method for the PML for acoustic wave equation \cite{DuruGabKreiss2019}. However, for more general  systems of hyperbolic PDEs  such as the linear elastodynamics equation, where
more than one wave type and wave speed are simultaneously present, the  theory can not be easily extended in  a straightforward manner. Therefore, we will require further assumptions and simplifications for such problems.

In this study we will take a more pragmatic approach, similar to \cite{DuruKozdonKreiss2016}. 
We aim to prove energy estimates for  model equations that are strong enough to allow us to design accurate and robust  DG  methods for the general 3D PML problem, for linear elastodynamics. 
We will derive energy estimates in the Laplace space for the PML  assuming  variations only in one space dimension,  two space dimension for the PML edge problem and three space dimensions  for a PML corner problem. By mimicking  these energy estimates we will design stable DG numerical flux and  method for the PML for 3D linear elastodynamics equation. If the discrete approximation of the PML is not provably stable for these model problems, there is no hope that the approximation will be stable for the general PML problem.

To begin, we develop a DG numerical method for the linear elastodynamics equation  using physically motivated numerical flux and penalty parameters, which are compatible with all well-posed, internal and external, boundary conditions \cite{DuruGabrielIgel2017,Duru_exhype_2_2019}. When the PML damping vanishes, by construction, our choice of penalty parameters yield an upwind scheme and a discrete energy estimate analogous to the continuous energy estimate. 
As in \cite{DuruGabKreiss2019}, to ensure numerical stability of the discretization when PML  is present, it is necessary to extend the  numerical DG fluxes, and the numerical  inter-element and boundary  procedures, to the PML auxiliary differential equations. This is crucial  for deriving discrete energy estimates analogous to the continuous energy estimates.   

We discretize in time using the high order arbitrary  derivative (ADER)  time stepping  scheme \cite{Duru_exhype_2_2019,DumbserKaser2006,DumbserPeshkovRomenski} of the same order of accuracy with the spatial discretization.  By combining the DG spatial approximation with the high order ADER  time stepping  scheme and the accuracy of the PML we obtain an arbitrarily  accurate wave propagation solver in the time domain.


The remainder of the paper will proceed as follows. In section 2 we introduce the equations of 3D linear elastodynamics, well-posed boundary conditions with interface conditions, and derive energy estimates.  In section 3 we derive the PML, and derive  energy estimates for the PML, in the Laplace space, in section 4.  In section 5, we present numerical approximations, and prove numerical stability in section 6. In section 7,  we present numerical experiments, in 2D and 3D, verifying the analysis, and demonstrating the effectiveness of the PML in seismological applications. We draw conclusions in section 8.
\section{Equations}
In this section, we present the equations of  linear elastodynamics in 3D, in velocity-stress formulation. 
We will introduce well-posed boundary conditions,  and  the interface conditions that will be used to patch DG elements together.
We will end the section by deriving energy estimates for the corresponding IBVP.
\subsection{Linear elastic wave equation}
Introduce the time variable   $t\ge  0$, and the spatial Cartesian coordinates $\left(x,y,z\right) \in \Omega \subset \mathbb{R}^3$.
Consider the  3D elastic wave equation, in a heterogeneous source-free medium, 
 in  first order form,
{
\begin{equation}\label{eq:linear_wave}
\begin{split}
\mathbf{P}^{-1} \frac{\partial{\mathbf{Q}}}{\partial t} = \sum_{\xi = x, y, z}\mathbf{A}_{\xi}\frac{\partial{\mathbf{Q}}}{\partial \xi}, 
  \end{split}
  \end{equation}
}
subject to the initial condition
\begin{align}\label{eq:initial_condition}
 \mathbf{Q}(x,y,z,0) = \mathbf{Q}_0(x,y,z) \in \mathrm{L}^2\left(\Omega\right).
\end{align}
The coefficient matrices are symmetric, $\mathbf{A}_{\xi} = \mathbf{A}_{\xi}^T$ and $\mathbf{P}= \mathbf{P}^T$,  with $ \mathbf{Q}^T\mathbf{P}\mathbf{Q} > 0$. In general the matrix $\mathbf{P}$ depends on the spatial coordinates $x, y,z$, and encodes the material parameters of the underlying medium. The constant coefficients and the non-dimensional  matrices $\mathbf{A}_{\xi}$ encapsulate the underlying linear conservation law and the corresponding linear constitutive relation. 

To describe wave propagation in elastic solids, we introduce the unknown wave fields 
\begin{align}\label{eq:velocity_stress}
\mathbf{Q}\left(x,y,z,t\right) = \begin{bmatrix}
\mathbf{v}(x,y, z,t) \\
\boldsymbol{\sigma}(x,y,z,t)
 \end{bmatrix},
\end{align}
with the particle velocity vector, $\mathbf{v} = \left[ v_x, v_y, v_z \right]^T$, and the stress vector, \\
$\boldsymbol{\sigma} = \left[ \sigma_{xx}, \sigma_{yy}, \sigma_{zz}, \sigma_{xy},  \sigma_{xz},  \sigma_{yz}\right]^T$.  
The symmetric constant coefficient matrices $\mathbf{A}_{\xi} $ describing the conservation of momentum and the constitutive relation, defined by Hooke's law, are given by
{

\begin{align}\label{eq:elastic_coeff}
\mathbf{A}_{\xi} = 
\begin{pmatrix}
\mathbf{0}_3 & \mathbf{a}_{\xi}\\
\mathbf{a}_{\xi}^T & \mathbf{0}_6
\end{pmatrix},
\quad
\mathbf{a}_x = 
\begin{pmatrix}
1& 0& 0& 0&0& 0\\
0& 0& 0& 1&0& 0\\
0& 0& 0& 0&1& 0
\end{pmatrix},
\quad
\mathbf{a}_y = 
\begin{pmatrix}
0& 0& 0& 1&0& 0\\
1& 0& 0& 0&0& 0\\
0& 0& 0& 0&0& 1
\end{pmatrix},
\quad
\mathbf{a}_z = 
\begin{pmatrix}
0& 0& 0& 0&1& 0\\
0& 0& 0& 0&0& 1\\
0& 0& 1& 0&0& 0\\
\end{pmatrix},
\end{align}
}
where $\mathbf{0}_3$ and $\mathbf{0}_6$ are  the $3$-by-$3$ and $6$-by-$6$ zero  matrices.

The symmetric positive definite material parameter matrix $\mathbf{P}$  is defined by
{
%
\begin{align}\label{eq:material_coeff}
\mathbf{P} = 
\begin{pmatrix}
\rho^{-1} \mathbf{1}  & \mathbf{0}\\
 \mathbf{0}^T  & \mathbf{C}
\end{pmatrix}
 ,
  \quad
  \mathbf{1} =   \begin{pmatrix}
  1 & 0 & 0 \\
  0 & 1 & 0 \\
  0 & 0 & 1 
  \end{pmatrix}
,
  \quad
  \mathbf{0} =   \begin{pmatrix}
  0 & 0 & 0& 0 & 0 & 0 \\
  0 & 0 & 0& 0 & 0 & 0 \\
  0 & 0 & 0& 0 & 0 & 0 
  \end{pmatrix}
  ,
\quad
\mathbf{C} =   \begin{pmatrix}
  c_{11} & c_{12} & c_{13} & c_{14} & c_{15} & c_{16}\\
  c_{12} & c_{22} & c_{23}& c_{24} & c_{25} & c_{26}\\
  c_{13} & c_{23} & c_{33}& c_{34} & c_{35} & c_{36}\\
  c_{14} & c_{24} & c_{34}& c_{44} & c_{45} & c_{46}\\
  c_{15} & c_{25} & c_{35}& c_{45} & c_{55} & c_{56}\\
  c_{16} & c_{26} & c_{36}& c_{46} & c_{56} & c_{66}
  \end{pmatrix},
\end{align}
}
where  $\rho(x,y,z) > 0$ is the  density of the medium, and  $\mathbf{C} = \mathbf{C}^{T} > 0$ is the symmetric positive definite  matrix of elastic constants.
Thus in \eqref{eq:linear_wave}, the first three equations are the conservation of momentum and the last six equations are the time derivatives of the constitutive relation, defined by Hooke's law, relating the  stress field to strains where the constant of proportionality is the stiffness matrix of elastic coefficients $\mathbf{C}$.

In a general anisotropic medium the stiffness matrix $\mathbf{C}$ is described by 21 independent elastic coefficients. 
However,  in the isotropic case, the medium is described by two independent elastic coefficients, the Lam\'e parameters $\mu > 0$, $\lambda > -\mu $.  We have
\begin{align}\label{eq:isotropic_stiffness_tensor}
\mathbf{C} =   \begin{pmatrix}
  2\mu + \lambda &  \lambda &  \lambda & 0 & 0 & 0\\
   \lambda & 2\mu + \lambda &  \lambda & 0  & 0 & 0\\
   \lambda &  \lambda & 2\mu + \lambda & 0 &  0 & 0\\
  0 & 0 & 0& \mu  & 0 & 0 \\
  0 & 0 & 0 & 0 & \mu & 0\\
  0 & 0 & 0 & 0 & 0 & \mu 
  \end{pmatrix}.
\end{align}
Here, $\lambda, \mu$ are the Lam\'e parameters, with $\mu > 0$ and $-\mu \le \lambda < \infty$. 
The elastic wave equation supports two families of wave, the P-wave and the S-wave, with wave speeds defined by
{
\begin{align}\label{eq:wave speed}
 c_p = \sqrt{\frac{2\mu + \lambda}{\rho}},\text{ and } \quad c_s = \sqrt{\frac{\mu }{\rho}}.
\end{align}
}
To analyse the effect of boundaries and boundary conditions we need to consider the eigenstructure 
of $\widetilde{\mathbf{A}}_\xi = {\mathbf{P}}{\mathbf{A}}_\xi$. For each $\xi=x,y,z$ there are 9 linearly independent eigenvectors and 6 
nontrivial eigenvalues, $\pm c_p,\pm c_s$, where the latter are double.
For later use we denote these eigenvalues by  $\pm c_\eta, \,\eta = x, y, z$, with
{
\begin{align}\label{eq:wave speed}
c_{\eta} = c_p~\text{if}~\eta = \xi,~\text{and} \quad c_{\eta} = c_s~\text{if}~\eta \ne \xi .
\end{align}
}

For real functions, we introduce the weighted $L^2$ inner product
\begin{align}\label{eq:weighted_scalar_product}
\left(\mathbf{Q}, \mathbf{F}\right)_P = \int_{\Omega}{\frac{1}{2}[\mathbf{Q}^T\mathbf{P}^{-1}\mathbf{F}] dxdydz},
\end{align}
and the corresponding norm
\begin{align}\label{eq:physical_energy}
\|\mathbf{Q}\left(\cdot, \cdot, \cdot, t\right)\|_P^2 = \left(\mathbf{Q}, \mathbf{Q}\right)_P = \int_{\Omega}\left(\sum_{\eta = x, y, z} \frac{\rho}{2} v_\eta^2  + \frac{1}{2}\boldsymbol{\sigma}^T{S}\boldsymbol{\sigma}\right)dxdydz. 
\end{align}
The weighted $L^2$-norm $\|\mathbf{Q}\left(\cdot, \cdot, \cdot, t\right)\|_P^2$ is the mechanical energy, which is the sum of the kinetic energy and the strain energy.

To begin with we consider the Cauchy problem with $\Omega=R^3$, and decay condition $|\mathbf{Q}| \to 0$ at $|(x,y,z)| \to \infty$.
%
%
To show that the  problem   is stable we multiply   \eqref{eq:linear_wave}  with $\boldsymbol{\phi}^T(x,y,z) $ 
from the left, where $\boldsymbol{\phi}(x,y,z) \in L^2\left(\Omega\right)$ is an arbitrary test function, and integrate over the whole spatial domain, $\Omega$, obtaining
{
\begin{align}\label{eq:product_1}
\int_{\Omega}\boldsymbol{\phi}^T\mathbf{P}^{-1}\frac{\partial \mathbf{Q}}{\partial t}dxdydz &= \int_{\Omega}\boldsymbol{\phi}^T\left(\sum_{\xi = x, y, z}\mathbf{A}_{\xi}\frac{\partial{\mathbf{Q}}}{\partial \xi}\right)dxdydz.
\end{align}
}
In the right hand side of \eqref{eq:product_1}, integrating-by-parts, and using the fact that the 
coefficient matrices are constant and symmetric, $\mathbf{A}_{\xi}  = \mathbf{A}_{\xi}^T$, gives
{
\begin{align}\label{eq:product_2}
\int_{\Omega}\boldsymbol{\phi}^T\mathbf{P}^{-1}\frac{\partial \mathbf{Q}}{\partial t}dxdydz  = \frac{1}{2}\int_{\Omega}\sum_{\xi = x,y,z} \left[\boldsymbol{\phi}^T\mathbf{A}_{\xi}\frac{\partial{\mathbf{Q}}}{\partial \xi} -\mathbf{Q}^T\mathbf{A}_{\xi}\frac{\partial{\boldsymbol{\phi}}}{\partial \xi}\right] dxdydz +  \frac{1}{2}\oint_{\partial \Omega}\boldsymbol{\phi}^T \left(\sum_{\xi = x, y, z}n_{\xi}\mathbf{A}_{\xi}\right)\mathbf{Q}  dS.
\end{align}
}
By the decay conditions there are no boundary terms.
Replacing $\boldsymbol{\phi}$ with $\mathbf{Q}$ in \eqref{eq:product_2}, in the right hand side  the volume terms vanish, having
{
\begin{align}\label{eq:product_3}
\int_{\Omega}\mathbf{Q}^T\mathbf{P}^{-1}\frac{\partial{\mathbf{Q}}}{\partial t}dxdydz =  0.
\end{align}
}
The energy equation follows
{
\begin{align}\label{eq:cauchy_estimate}
\frac{d}{dt}\|\mathbf{Q}\left(\cdot, \cdot, \cdot, t\right)\|_P^2 = 0, 
\end{align}
}
and thus the energy is conserved, $\|\mathbf{Q}\left(\cdot, \cdot, \cdot, t\right)\|_P^2 = \|\mathbf{Q}\left(\cdot, \cdot, \cdot, 0\right)\|_P^2$, for all $t\ge 0$. 
This analysis indicates that the Cauchy problem for \eqref{eq:linear_wave} is  well-posed and asymptotically stable. 
\begin{remark}
Depending on the coefficient matrices $\mathbf{P}, \mathbf{A}_{\xi}$ the  system \eqref{eq:linear_wave} can also describe acoustic waves, electromagnetic waves or elastic waves propagating in a heterogeneous medium. 
\end{remark}
\subsection{Boundary conditions}
Consider the 3D cuboidal domain
{
\begin{equation}\label{eq:physical_domain}
\Omega = \{(x,y,z): -1 \le x \le 1, \quad -1 \le y\le 1, \quad -1 \le z\le 1\}.
\end{equation}
}
 Stable and well-posed boundary conditions are needed to close the rectangular surfaces  of the boundaries of the cuboidal domain.
Boundary conditions are enforced by modifying the amplitude of the incoming characteristics \cite{GustafssonKreissOliger1995,DuruGabrielIgel2017,Duru_exhype_2_2019}. 
To analyse the effect of boundaries and boundary conditions   we need for each $\xi=x,y,z$ the traction vector at boundaries with
 $\xi = -1$, or $\xi =  1$. The traction there is
\begin{align}\label{eq:velocity_tractions}
%
\mathbf{T} =  \boldsymbol{a}_{\xi}\boldsymbol{\sigma}.
\end{align}
Note that 
\begin{align}
\mathbf{A}_{\xi}\mathbf{Q}  = \begin{pmatrix}
\boldsymbol{a}_{\xi}\boldsymbol{\sigma} \\
\boldsymbol{a}_{\xi}^T \mathbf{v} \\
\end{pmatrix},
~\
\text{and}
~\
\mathbf{Q} ^T\mathbf{A}_{\xi}\mathbf{Q}  =   \mathbf{v}^T\mathbf{T} + \mathbf{T}^T\mathbf{v} = 2\mathbf{v}^T\mathbf{T}.
\end{align}

In this case the energy method yields \eqref{eq:product_3} with a boundary term in the right hand side.
If we introduce the reference boundary surface
 \begin{align}
 \widetilde{\Gamma} = [-1, 1]\times[-1, 1].
 \end{align}
The boundary term is given by
\begin{align}\label{eq:product_BT}
  \mathrm{BT}\left(\mathbf{v}, \mathbf{T}\right) &\equiv \frac{1}{2}\oint_{\partial \Omega} \left[\mathbf{Q}^T \left(\sum_{\xi = x, y, z}n_{\xi}\mathbf{A}_{\xi}\right)\mathbf{Q}\right] dS \nonumber \\
   &= \sum_{\xi = x, y, z}\int_{\widetilde{\Gamma}}  \frac{1}{2}\left[ \mathbf{Q}^T\mathbf{A}_{\xi}\mathbf{Q}  \Big|_{\xi =1}- \mathbf{Q}^T\mathbf{A}_{\xi}\mathbf{Q}  \Big|_{\xi =-1}  \right]  \frac{dxdydz}{d\xi}\nonumber \\
 &= \sum_{\xi = x, y, z}\int_{\widetilde{\Gamma}}\left[ \mathbf{v}^T\mathbf{T}  \Big|_{\xi =1}- \mathbf{v}^T\mathbf{T}  \Big|_{\xi = -1}  \right]  \frac{dxdydz}{d\xi}.
\end{align}
Well-posed boundary conditions are needed to close the edges of the rectangular domain such that the boundary term   $\mathrm{BT}\left(\mathbf{v}, \mathbf{T}\right)$ defined in \eqref{eq:product_BT} is negative semi-definite.

For each $\xi=x,y,z$ there are 3 ingoing characteristic variables and 3 outgoing characteristic variables at each boundary $\xi=\pm 1$, corresponding to the nontrivial eigenvalues 
of $\tilde A_\xi$. These characteristic variables are
 \begin{align}\label{eq:characteristics}
{q}_\eta  = \frac{1}{2}\left({Z}_\eta {v}_\eta  + {T}_\eta \right), \quad {p}_\eta  = \frac{1}{2}\left({Z}_\eta {v}_\eta  - {T}_\eta \right), \quad Z_\eta  > 0, \quad \eta = x, y, z,
\end{align}
where we have introduced the impedance, $Z_{\eta} = \rho c_{\eta}$, and $c_\eta$ are the wave speeds defined in \eqref{eq:wave speed}.
At the boundary $\xi = 1$ $(\xi = -1)$,  ${q}_\eta $ (${p}_\eta $)  are the characteristic variables  going into the domain  and  ${p_\eta }$ (${q_\eta }$) the  
characteristic variables going out of the domain.
 For each $\xi = x, y, z$, we consider linear boundary conditions, 
{
\begin{equation}\label{eq:BC_General2}
\begin{split}
&{q}_\eta  - {\gamma_\eta }{p}_\eta  = 0 \iff \frac{Z_{\eta}}{2}\left({1-\gamma_\eta }\right){v}_\eta  -\frac{1+\gamma_\eta }{2} {T}_\eta  = 0,  \quad \xi = -1, \\
&{p}_\eta  - {\gamma_\eta }{q}_\eta  = 0 \iff \frac{Z_{\eta}}{2} \left({1-\gamma_\eta }\right){v}_\eta  + \frac{1+\gamma_\eta }{2}{T}_\eta  = 0,  \quad  \xi = 1,
 \end{split}
\end{equation}
}
where the reflection coefficients $\gamma_\eta $ are real numbers with $ 0 \le |\gamma_\eta |\le 1$. 
 Note that these boundary conditions do not allow the different families of characteristic variables to mix.
The boundary conditions \eqref{eq:BC_General2} specify the ingoing characteristics on the boundary in terms of the outgoing characteristics. 
The boundary condition \eqref{eq:BC_General2}, can describe several physical situations.
 We have a free-surface boundary condition  if $\gamma_\eta  = 1$, an absorbing boundary condition  if $\gamma_\eta  = 0$ and a clamped boundary condition  if $\gamma_\eta  = -1$.
We note that
  \begin{align}\label{eq:simplify_3}
& \text{at} \quad \xi = -1, \quad v_\eta T_\eta  =  \frac{Z_\eta \left(1-\gamma_\eta \right)}{\left(1+\gamma_\eta \right)}v_{\eta}^2 = \frac{\left(1+\gamma_\eta \right)}{Z_\eta \left(1-\gamma_\eta \right)}T_{\eta}^2  >0, \quad  |\gamma_\eta | < 1, \quad  v_\eta T_\eta  = 0, \quad  |\gamma_\eta | = 1, \nonumber
 \\
 & \text{at} \quad \xi = 1, \quad  v_\eta T_\eta  = - \frac{Z_\eta \left(1-\gamma_\eta \right)}{\left(1+\gamma_\eta \right)}v_{\eta}^2 = - \frac{\left(1+\gamma_\eta \right)}{Z_\eta \left(1-\gamma_\eta \right)}T_{\eta}^2  < 0, \quad  |\gamma_\eta | < 1, \quad  v_\eta T_\eta  = 0, \quad  |\gamma_\eta | = 1.
  \end{align}
If all reflection coefficients at all boundaries satisfy $|\gamma_\eta |\le 1$ then the boundary term defined in \eqref{eq:product_BT} is negative semi-definite, $\mathrm{BT}\le 0$,
and  we have
\begin{align}\label{eq:energy_conservation}
\frac{d}{dt}\|\mathbf{Q}\left(\cdot, \cdot, \cdot, t\right)\|_P^2 = \mathrm{BT} \le 0,
\end{align}
where the energy $\|\mathbf{Q}\left(\cdot, \cdot, \cdot, t\right)\|_P^2$ is defined by  \eqref{eq:physical_energy}.
\subsection{Interface conditions}
We introduce physical interface conditions  we will use to couple local  DG elements to the global domain.
These physical interface conditions will connect two adjacent elements.
Consider the Cauchy prolem with a planar interface at $x = 0$, and  denote the corresponding fields and material parameters in the positive/negative sides of the interface with the superscripts $+/-$.
We define the jumps in scalar or vector valued fields by $\lJump{{a} \rJump} =  a^{+} - a^{-}$.
The interface conditions that will connect two adjacent elements are force balance, and vanishing opening and slip velocities
\begin{equation}\label{eq:elastic_elastic_interface}
 {\mathbf{T}}^{+}  = {\mathbf{T}}^{-} = {\mathbf{T}}, \quad \lJump {\mathbf{v}} \rJump = 0.
\end{equation}


As before, using the energy method we have
 \begin{align}\label{eq:energy_estimate_fault_30}
\frac{d}{dt}\left(\|\mathbf{Q}^{-}\left(\cdot, \cdot, \cdot, t\right)\|_P^2+ \|\mathbf{Q}^{+}\left(\cdot, \cdot, \cdot, t\right)\|_P^2\right)   =   \mathrm{IT}\left({\mathbf{T}}, \lJump {\mathbf{v}} \rJump \right).
 \end{align}
Here the interface term is defined by
\begin{align}\label{eq:interface_term}
  \mathrm{IT}\left({\mathbf{T}}, \lJump {\mathbf{v}} \rJump\right) = -\int_{\widetilde{\Gamma}} \lJump {\mathbf{v}} \rJump ^T {\mathbf{T}} {dydz}.
\end{align}
Note that by \eqref{eq:elastic_elastic_interface} the interface term vanishes identically and the sum of the energy is conserved.

Recently in \cite{DuruGabrielIgel2017,Duru_exhype_2_2019}, we developed new physics based numerical flux suitable for patching DG elements together.   A key step taken in the construction of the physics based numerical flux is to reformulate the boundary condition  \eqref{eq:BC_General2} and  interface condition \eqref{eq:elastic_elastic_interface} by introducing transformed (hat-) variables, $\left(\widehat{\mathbf{T}}, \widehat{\mathbf{v}}\right)$,  so that we can simultaneously construct (numerical) boundary/interface data for the particle velocity vector $\mathbf{v}$ and the traction vector $\mathbf{T}$.   The hat-variables encode the solution of the IBVP on the boundary/interface. To be more specific, the hat-variables are solutions of the Riemann problem constrained against physical boundary/interface conditions  \eqref{eq:BC_General2} and \eqref{eq:elastic_elastic_interface}. Since the hat-variables are constructed to satisfy the boundary/interface conditions,  \eqref{eq:BC_General2}  and  \eqref{eq:elastic_elastic_interface} exactly, we must have
{
\begin{align}\label{eq:identity_bc}
\mathrm{BT}(\widehat{\mathbf{v}}, \widehat{\mathbf{T}})  \le 0,
\end{align}
\begin{align}\label{eq:identity_interface}
 \mathrm{IT}\left(\widehat{\mathbf{T}}, \lJump \widehat{\mathbf{v}}  \rJump \right)  = 0.
\end{align}
}
 The indentities \eqref{eq:identity_bc}-\eqref{eq:identity_interface}, will be used in proving numerical stability. We refer the reader to \cite{DuruGabrielIgel2017,Duru_exhype_2_2019} for more elaborate discussions.

\section{PML for first order wave equations}
Here, we will use the well known complex coordinate stretching technique \cite{Chew1994}, to construct a modal PML,  \cite{KDuru2016,DuruKozdonKreiss2016,AppeloKreiss2006} for the  system \eqref{eq:linear_wave}. As above, we consider the cuboidal domain \eqref{eq:physical_domain}.
To begin with, let the Laplace transform, in time, of $\mathbf{Q}\left(x,y, z, t\right)$ be defined by
{
\small
\begin{equation}
\widetilde{\mathbf{Q}}(x,y,z,s)  = \int_0^{\infty}e^{-st}{\mathbf{Q}}\left(x,y,z,t\right)\text{dt},  \quad s = a + ib, \quad \Re{s} = a > 0.
\end{equation}
}
We consider  a setup where the PML is included in all spatial coordinates.  Take the Laplace transform,  in time,  of    equation  \eqref{eq:linear_wave}. 
The PML can be constructed  in each coordinate,  $\xi = x, y, z $, using 
$\partial/\partial{\xi} \to 1/S_{\xi} \partial/\partial{\xi} $. 
Here
{
\small
\begin{align}\label{eq:PML_metric}
S_{\xi} = 1 +\frac{d_{\xi}\left(\xi\right)}{s + \alpha_{\xi}\left(\xi\right)},
\end{align}
}
 are the complex PML metrics, with $s$ denoting the Laplace dual time variable,  $d_{\xi} \ge 0$  are  the PML damping functions and  $\alpha_{\xi} \ge 0$ are the complex frequency shift (CFS) \cite{Kuzuoglu96}.
 
 Now, take the Laplace transform of \eqref{eq:linear_wave} in time, and introduce the complex change of variable $\partial/\partial{\xi} \to 1/S_{\xi} \partial/\partial{\xi} $. We have the PML in the Laplace space
 {
\begin{equation}\label{eq:linear_wave_PML_Laplace}
\begin{split}
\mathbf{P}^{-1} s \widetilde{\mathbf{Q}}(x,y,z,s) = \sum_{\xi = x, y, z}\mathbf{A}_{\xi}\frac{1}{S_{\xi} }\frac{\partial{\widetilde{\mathbf{Q}}(x,y,z,s)}}{\partial \xi}, 
  \end{split}
  \end{equation}
}
In order to localize the PML in time, we introduce the auxiliary variable
\begin{equation}\label{eq:auxiliary_PML_Laplace}
\begin{split}
 \widetilde{\mathbf{w}}_{\xi}(x,y,z,s) = \frac{1}{(s + \alpha_\xi)S_{\xi} }\mathbf{A}_{\xi}\frac{\partial{\widetilde{\mathbf{Q}}(x,y,z,s)}}{\partial \xi}, 
  \end{split}
  \end{equation}
  and invert the Laplace transform,
 having the time-dependent PML 
 {
\begin{equation}\label{eq:elastic_pml_1A}
\begin{split}
\mathbf{P}^{-1}\frac{\partial{\mathbf{Q}}}{\partial t} = \sum_{\xi = x, y, z}\left[\mathbf{A}_{\xi}\frac{\partial{\mathbf{Q}}}{\partial \xi}  - d_{\xi}\left(\xi\right)\mathbf{w}_{\xi}\right],
  \end{split}
  \end{equation}
\begin{equation}\label{eq:elastic_pml_2A}
\begin{split}
\frac{\partial{\mathbf{w}_{\xi}}}{\partial t} 
 = \mathbf{A}_{\xi}\frac{\partial{\mathbf{Q}}}{\partial \xi} -   \left(\alpha_{\xi}(\xi) + d_{\xi}\left(\xi\right)\right)\mathbf{w}_{\xi}.
  \end{split}
  \end{equation}
}
Since, we have used the well known PML metric \eqref{eq:PML_metric}, the PML \eqref{eq:elastic_pml_1A}-\eqref{eq:elastic_pml_2A} can be shown to be  analogous to other  PML models such as  \cite{KDuru2016,DuruKozdonKreiss2016,AppeloKreiss2006,Be_etAl}.     We will initialize the PML with zero initial data and  terminate the PML  \eqref{eq:elastic_pml_1A}-\eqref{eq:elastic_pml_2A}  with the boundary conditions  \eqref{eq:BC_General2}.
Note  that the PML absorption functions  $d_{\xi} \ge 0$ and auxiliary  functions $\mathbf{w}_{\xi}$ vanish almost everywhere except in the layers defining the PML.
For example, inside  the  PML strip truncating the  computational domain in the $x$-direction  only   the auxiliary variable $\mathbf{w}_{x}$ and the PML damping function $d_x\left(x\right) \ge 0$   are nonzero, that is  $\mathbf{w}_{\xi} =0$ and $d_{\xi}\left(\xi\right) =0$ for $\xi \ne x$.  There are  domain edges where one damping function vanishes and two damping functions are simultaneously active, for example in the $xy-$edge we have $d_z(z) = 0$ and  $d_{\xi}\left(\xi\right) \ge 0$ for $\xi = x, y$. There also   corner regions where all damping functions are simultaneously nonzero, $d_{\xi}\left(\xi\right) >0$.

 Without damping, $d_{\xi}\left(\xi\right)\equiv 0$,  we recover the  elastic wave equation  \eqref{eq:linear_wave},  which satisfies the energy estimate  \eqref{eq:energy_conservation}.
However, the energy estimate is not  applicable to the PML, \eqref{eq:elastic_pml_1A}-\eqref{eq:elastic_pml_2A} when $d_{\xi}\left(\xi\right) \ge 0$, for any $\eta = x, y, z$.  

In the coming section we will discuss the temporal  stability of the PML and derive energy estimates in the Laplace space.

\section{Energy estimates for the PML in the Laplace space}
The stability analysis of the PML has attracted considerable attention in the literature.  The usual analysis, see \cite{Duruthesis2012,Be_etAl,SkAdCr,AppeloKreiss2006,DuruGabKreiss2019},  focuses on the constant coefficient problem, where, all variable coefficients are frozen and the problem  is  subdivided into simpler  sub-problems, such as: a) the PML strip problem, where only one PML damping is nonzero,  eg. the $x$-dependent PML strip with  $d_x(x) > 0$ and $d_{\xi}\left(\xi\right) =0$ for $\xi \ne x$; b) the PML edge problem where one damping function vanishes and two damping functions are simultaneously active, eg. in the $xy-$PML-edge we have $d_z(z) = 0$ and  $d_{\xi}\left(\xi\right) \ge 0$ for $\xi = x, y$; c) a PML corner region with $d_{\xi}\left(\xi\right) >0$, for all $\xi = x, y, z$. 
The stability  analysis of each Cauchy PML layer can be performed  using classical Fourier methods. The analysis has been extended to accommodate boundary conditions, \cite{Duruthesis2012} using normal mode analysis for the IBVP.  The results of this theory are documented in \cite{Duruthesis2012,DuKrSIAM,KDuru2016,DuruKozdonKreiss2016}. The central result is: as long as the underlying undamped IBVP (without the PML) is stable, the PML IBVP is stable if there are no modes with phase and group velocities pointing in opposite direction.  Since, we have used the well known PML metric \eqref{eq:PML_metric}, this result is valid for the  PML \eqref{eq:elastic_pml_1A}-\eqref{eq:elastic_pml_2A}. We will not repeat this analysis here, for more elaborate discussion we refer the reader to above references.

Using the modal analysis outlined above, the continuous PML IBVP,  \eqref{eq:elastic_pml_1A}-\eqref{eq:elastic_pml_2A} with \eqref{eq:BC_General2},  can be proven stable. However,  extending this modal analysis to the discrete setting is possible, but non-trivial even in the most simplified cases. Therefore, even when the continuous PML IBVP can proven stable, numerical instabilities persist when PML are used in computations, particularly when boundary  and interface  conditions are included. A numerical method which is provably stable in the absence of the PML can become unstable when the PML is included \cite{ChongXiaMillerTso2011,Tago_etal,KDuru2016,DuruKozdonKreiss2016}. When a continuous PML is stable, the  primary difficulty now is how to ensure numerical stability when  the PML is discretized.  A main challenge is that the PML is generally asymmetric.  Therefore, classical energy norms that are traditionally used in developing stable DG, finite difference or finite element methods for hyperbolic PDEs  are not  sufficient when the PML is present.  

In this study we will take a more pragmatic approach. We will derive energy estimates in the Laplace space for some simplified  model problems, for a)   PML strip problem,  b)  PML edge problem, and c)  PML corner problem.  These energy estimates can be used to derive stable numerical methods for the PML IBVP,  \eqref{eq:elastic_pml_1A}-\eqref{eq:elastic_pml_2A} with \eqref{eq:BC_General2} and \eqref{eq:elastic_elastic_interface}. If the discrete approximation of the PML is not provably stable for these model problems, there is no hope that the approximation will be stable for the general PML problem.


As in \cite{DuruGabKreiss2019} we will reformulate the IBVP, \eqref{eq:elastic_pml_1A}-\eqref{eq:elastic_pml_2A} with \eqref{eq:BC_General2} and \eqref{eq:elastic_elastic_interface},  by introducing new variables by subtracting the product of the initial function and $e^{-t}$ from each of the unknown functions. The new unknown functions satisfy the same system as the original unknowns, \eqref{eq:elastic_pml_1A}-\eqref{eq:elastic_pml_2A}, but with zero initial data and nontrivial source functions in the equations. Denote the source function by $\mathbf{f}(x,y,z, t) = \left(\mathbf{F}_{Q}\left(x,y,z, t\right),  \mathbf{F}_{w_\xi}\left(x,y,z, t\right)\right)^T$,  and note that all these components decay exponentially in time, and depend on the initial data and derivatives thereof. 


Laplace transformation in time  of the PML equations \eqref{eq:elastic_pml_1A}-\eqref{eq:elastic_pml_2A},  yields

 {
\begin{equation}\label{eq:elastic_pml_1A_Laplace}
\begin{split}
\mathbf{P}^{-1} s\widetilde{\mathbf{Q}} = \sum_{\xi = x, y, z}\left[\mathbf{A}_{\xi}\frac{\partial{\widetilde{\mathbf{Q}}}}{\partial \xi}  - d_{\xi}\left(\xi\right)\widetilde{\mathbf{w}}_{\xi}\right] + \widetilde{\mathbf{F}}_{Q},
  \end{split}
  \end{equation}
\begin{equation}\label{eq:elastic_pml_2A_Laplace}
\begin{split}
s\widetilde{\mathbf{w}}_{\xi}
 = \mathbf{A}_{\xi}\frac{\partial{\widetilde{\mathbf{Q}}}}{\partial \xi} -   \left(\alpha_{\xi}\left(\xi\right) + d_{\xi}\left(\xi\right)\right)\widetilde{\mathbf{w}}_{\xi} +  \widetilde{\mathbf{F}}_{w_\xi},
  \end{split}
  \end{equation}
}
 Transforming the boundary conditions \eqref{eq:BC_General2} and the interface conditions  \eqref{eq:elastic_elastic_interface}  gives
{
\begin{equation}\label{eq:BC_General2_Laplace}
\begin{split}
& \frac{Z_{\eta}}{2}\left({1-\gamma_\eta }\right)\widetilde{v}_\eta  -\frac{1+\gamma_\eta }{2} \widetilde{T}_\eta  = 0,  \quad \xi = -1, \\
& \frac{Z_{\eta}}{2} \left({1-\gamma_\eta }\right)\widetilde{v}_\eta  + \frac{1+\gamma_\eta }{2}\widetilde{T}_\eta  = 0,  \quad  \xi = 1,
 \end{split}
\end{equation}
}
\begin{equation}\label{eq:elastic_elastic_interface_Laplace}
  \widetilde{\mathbf{T} }^{+}  =  \widetilde{\mathbf{T} }^{-} =  \widetilde{\mathbf{T} }, \quad \lJump{ \widetilde{\mathbf{v}}  \rJump} = 0.
\end{equation}
Here   $\widetilde{\mathbf{T} }$ is the traction vector at  the interface at hand, with components $\widetilde{T}_\eta$. Note that \eqref{eq:BC_General2_Laplace} is defined for $\eta=x,y,z$.
The boundary conditions \eqref{eq:BC_General2_Laplace} satisfy
 {
  \begin{align}\label{eq:simplify_3_Laplace}
   \widetilde{\mathbf{T} }^\dagger \widetilde{\mathbf{v} } =  \widetilde{\mathbf{v} }^\dagger \widetilde{\mathbf{T} },
   \quad 
   \widetilde{\mathbf{v} }^\dagger \widetilde{\mathbf{T} }\Big|_{\xi = -1}  \ge 0, 
   \quad
 \widetilde{\mathbf{v} }^\dagger \widetilde{\mathbf{T} }\Big|_{\xi = 1}   \le 0,
   \quad
   \forall |\gamma_\eta | \le 1,
  \end{align}
  }
  and the interface condition \eqref{eq:elastic_elastic_interface_Laplace} gives
   {
  \begin{align}\label{eq:simplify_4_Laplace}
   \widetilde{\mathbf{T} }^\dagger \lJump{\widetilde{\mathbf{v}} \rJump} =   \lJump{ \widetilde{\mathbf{v}}  \rJump}^\dagger  \widetilde{\mathbf{T} }  = 0.
  \end{align}
  }
Here, $\widetilde{\mathbf{T}}^{\dagger}$ denotes the complex conjugate transpose of $\widetilde{\mathbf{T}}$. 
From \eqref{eq:simplify_3_Laplace}, note the negative semi-definiteness of the boundary terms
\begin{align}\label{eq:BT_Negative}
 \frac{1}{2}\left[\widetilde{\mathbf{Q}}^{\dagger}\mathbf{A}_{\xi} \widetilde{\mathbf{Q}}\right]\Big|_{\xi =-1}^{\xi = 1} =     \widetilde{\mathbf{v} }^\dagger \widetilde{\mathbf{T} }\Big|_{\xi = 1}-  \widetilde{\mathbf{v} }^\dagger \widetilde{\mathbf{T} }\Big|_{\xi = -1} \le 0, \quad \forall |\gamma_{\eta}| \le 1.
\end{align}

Next, we use \eqref{eq:elastic_pml_2A_Laplace} and eliminate the auxiliary variables $\widetilde{\mathbf{w}}_{\xi}$ in \eqref{eq:elastic_pml_1A_Laplace}.  We obtain
\begin{equation}\label{eq:elastic_pml_3D_Laplace}
\begin{split}
\mathbf{P}^{-1} \left(sS_x\right)\widetilde{\mathbf{Q}} =   \sum_{\xi = x, y, z}\frac{S_x}{S_{\xi}} \mathbf{A}_{\xi}\frac{\partial{\widetilde{\mathbf{Q}} }}{\partial \xi}  + \mathbf{P}^{-1}\widetilde{\mathbf{F}},
  \end{split}
  \end{equation}
with  the boundary conditions \eqref{eq:BC_General2_Laplace} and interface condition \eqref{eq:elastic_elastic_interface_Laplace},  where
\begin{align}
\widetilde{\mathbf{F}} =     \mathbf{P}\left(S_x \widetilde{\mathbf{F}}_{Q}  - \sum_{\xi = x,y, z} \frac{d_{\xi}S_x }{s + \alpha_{\xi} + d_{\xi}} \widetilde{\mathbf{F}}_{w_\xi}\right).
\end{align}
%
For Laplace transformed systems, it is necessary to extend the weighted scalar product \eqref{eq:weighted_scalar_product} and the norm \eqref{eq:physical_energy} to complex functions.
 For complex functions, we define the weighted scalar product
 \begin{align}\label{eq:elastic_scalarproduct_laplacePML}
\left(\widetilde{\mathbf{Q}}, \widetilde{\mathbf{F}}\right)_{{P}} =  \int_{\Omega} \frac{1}{2}\left[\widetilde{\mathbf{Q}}^{\dagger}\mathbf{P}^{-1} \widetilde{\mathbf{F}}\right]   dxdydz,
 \end{align}
 and the corresponding  norm
\begin{align}\label{eq:elastic_energy_laplacePML}
\|\widetilde{\mathbf{Q}} (\cdot ,\cdot ,\cdot , s)\|_{P}^2 = \left(\widetilde{\mathbf{Q}}, \widetilde{\mathbf{Q}}\right)_{{P}}.
 \end{align}
Again, $\widetilde{\mathbf{Q}}^{\dagger}$ denotes the complex conjugate transpose of $\widetilde{\mathbf{Q}}$. 

Let   $s = a + i b$ and consider
 the complex number
\begin{align}\label{eq:scaled_variables_2}
 S_{x} =  \frac{\alpha + s + d }{\alpha + s} = \frac{\alpha + a + d + ib}{ \alpha + a + ib }.
\end{align} 
Note that  for $\alpha \ge 0$, $d \ge 0$ we have
{
\begin{align}\label{eq:scaled_variables_3}
\Re{\left(sS_x\right)} =  a + \left( \frac{a\left(a+\alpha\right) + b^2}{{|s + \alpha|^2}}\right)d \ge a.
\end{align}
}
Thus for any  $a  > 0$, we have
\begin{align}
0 < \frac{1}{\Re{\left(sS_x\right)}} = \frac{1}{a + \left( \frac{a\left(a+\alpha\right) + b^2}{{|s + \alpha|^2}}\right)d } \le \frac{1}{a}.
\end{align}

\subsection{The PML strip problem}
We will now focus on the $x$-dependent  PML strip problem, that is  \eqref{eq:elastic_pml_1A}-\eqref{eq:elastic_pml_2A}  with  $d_x \ge  0$,  and $\partial /\partial \xi =  0$, $d_\xi = 0$ for $\xi = y, z$.
This simplification results in the 1D PML problem
\begin{equation}\label{eq:weak_Laplace_1D}
\begin{split}
 \left(sS_x\right) \mathbf{P}^{-1} \widetilde{\mathbf{Q}} = {A}_x\frac{\partial{\widetilde{\mathbf{Q}} }}{\partial x} +  \mathbf{P}^{-1}\widetilde{\mathbf{F}}.
  \end{split}
  \end{equation}
  Equation \eqref{eq:weak_Laplace_1D} lives in a 3D domain define in \eqref{eq:physical_domain}, but have been simplified by restricting the initial data and the forcing $\widetilde{\mathbf{F}}$  to functions that vary only in 1D, the $x$-axis.
  We have
\begin{theorem}\label{theo:PML_1D_Strip_Laplace}
Consider the 1D   PML equation \eqref{eq:weak_Laplace_1D} in the Laplace space, with  piecewise constant $d_x(x) = d\ge 0$ and  $\alpha_x  = \alpha\ge 0$,  subject to the boundary conditions \eqref{eq:BC_General2_Laplace},  with $ |\gamma_\eta| \le 1$  at $x = \pm 1$, and the interface condition \eqref{eq:elastic_elastic_interface_Laplace} at discontinuities of $d_x(x)$. For all $s$ such that  $\Re(s) \ge a > 0$ we have
{
\begin{equation}\label{eq:energy_estimate_pml_laplace_strip}
\begin{split}
& \|\sqrt{\Re(s S_x)}\widetilde{\mathbf{Q}} (\cdot ,\cdot ,\cdot , s)\|_{P}^2 \le \|\widetilde{\mathbf{Q}} (\cdot ,\cdot ,\cdot , s)\|_{P} \|\widetilde{\mathbf{F}} (\cdot ,\cdot ,\cdot , s)\|_{P}^2 + \widetilde{\mathrm{BT}}, \quad  \Re(s S_x) \ge a > 0, \\
& \widetilde{\mathbf{F}} =     \mathbf{P}\left(S_x \widetilde{\mathbf{F}}_{Q}  -  \frac{d }{s + \alpha} \widetilde{\mathbf{F}}_{w_x}\right), \quad \widetilde{\mathrm{BT}} = \int_{\widetilde{\Gamma}}\frac{1}{2}\left[\widetilde{\mathbf{Q}}^\dagger{A}_x\widetilde{\mathbf{Q}}\right]\Big|_{x = -1}^{x =1}dydz  \le 0, \quad \forall |\gamma_\eta| \le 1.
\end{split}
\end{equation}
}
\end{theorem}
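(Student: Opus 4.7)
The plan is a Laplace-space energy argument directly modeled on the time-domain identity \eqref{eq:cauchy_estimate}. I would take the weighted inner product of \eqref{eq:weak_Laplace_1D} with $\widetilde{\mathbf{Q}}$, producing
\begin{equation*}
\int_\Omega \tfrac{1}{2}\,(sS_x)\,\widetilde{\mathbf{Q}}^{\dagger}\mathbf{P}^{-1}\widetilde{\mathbf{Q}}\,dxdydz
= \int_\Omega \tfrac{1}{2}\,\widetilde{\mathbf{Q}}^{\dagger} A_x \,\frac{\partial \widetilde{\mathbf{Q}}}{\partial x}\,dxdydz + (\widetilde{\mathbf{Q}},\widetilde{\mathbf{F}})_{P}.
\end{equation*}
Because $d_x$ is only piecewise constant, I would split the $x$-integration at the finitely many jumps, integrate by parts in each subinterval, add the complex conjugate of the resulting identity, and divide by two. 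The symmetry $A_x=A_x^{T}$ collapses the volume terms and leaves only boundary contributions.

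Each internal contribution at a jump $x_j$ takes the form $\int_{\widetilde{\Gamma}}\lJump \widetilde{\mathbf{v}}\rJump^{\dagger} \widetilde{\mathbf{T}}\,dydz$ and vanishes by the interface identity \eqref{eq:simplify_4_Laplace}, which is a direct consequence of the Laplace-transformed interface conditions \eqref{eq:elastic_elastic_interface_Laplace}. The remaining external contribution at $x=\pm 1$ is precisely the boundary term $\widetilde{\mathrm{BT}}$ appearing in \eqref{eq:energy_estimate_pml_laplace_strip}, and by \eqref{eq:BT_Negative} it is negative semi-definite for every set of reflection coefficients with $|\gamma_\eta|\le 1$.

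Taking real parts, the left-hand side becomes $\|\sqrt{\Re(sS_x)}\,\widetilde{\mathbf{Q}}\|_{P}^{2}$ in the norm \eqref{eq:elastic_energy_laplacePML}; the decisive input here is the lower bound $\Re(sS_x)\ge a>0$ obtained in \eqref{eq:scaled_variables_3}, which guarantees that $\sqrt{\Re(sS_x)}$ is real-valued and that the weighted quantity is a genuine squared norm. Cauchy--Schwarz applied to the forcing term in the $(\cdot,\cdot)_{P}$ scalar product then yields $\Re(\widetilde{\mathbf{Q}},\widetilde{\mathbf{F}})_{P}\le \|\widetilde{\mathbf{Q}}\|_{P}\,\|\widetilde{\mathbf{F}}\|_{P}$, and rearranging delivers \eqref{eq:energy_estimate_pml_laplace_strip}.

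The main obstacle is not the computation itself but the bookkeeping around the piecewise constant damping $d_x$ together with the Laplace-space coercivity: one must verify that every interior jump of $d_x$ produces an interface contribution of exactly the kind annihilated by \eqref{eq:simplify_4_Laplace}, and that the PML-enhanced positivity \eqref{eq:scaled_variables_3}, which is what the complex stretching actually buys us over the undamped IBVP, genuinely carries through into the weighted norm on the left-hand side. Once those two structural points are in place, the remainder is the standard symmetric-hyperbolic energy estimate transcribed from the time domain into the Laplace domain.
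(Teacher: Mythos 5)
Your proposal is correct and follows essentially the same route as the paper's proof: multiply \eqref{eq:weak_Laplace_1D} by $\widetilde{\mathbf{Q}}^{\dagger}$, integrate by parts after splitting the domain at the discontinuities of $d_x$ (where the interface terms vanish by \eqref{eq:elastic_elastic_interface_Laplace}), add the complex conjugate so the symmetric volume terms cancel, invoke \eqref{eq:BT_Negative} for the boundary terms, and finish with Cauchy--Schwarz. Your explicit appeal to \eqref{eq:simplify_4_Laplace} at interior jumps and to the coercivity bound \eqref{eq:scaled_variables_3} only spells out steps the paper leaves implicit, so no gap remains.
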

\begin{proof}
 It suffices to consider constant PML damping $d_{x} \ge 0$ coefficients in the entire domain. If $d_{x} = d \ge 0$ is piecewise constant we split the domain, and the integral, along the discontinuities,  with the jump condition \eqref{eq:elastic_elastic_interface_Laplace} the interface terms vanish.
  Multiply  \eqref{eq:weak_Laplace_1D} with $\widetilde{\mathbf{Q}}^{\dagger}$ from the left and integrate over the whole spatial domain. Integration-by-parts gives
  {
  \small
  \begin{equation}\label{eq:weak_Laplace_1D_1}
\begin{split}
\int_{\Omega}\left( \left(sS_x\right)\widetilde{\mathbf{Q}}^{\dagger} \mathbf{P}^{-1} \widetilde{\mathbf{Q}} \right)dxdy dz &= \frac{1}{2}\int_{\Omega}\left(\left[\widetilde{\mathbf{Q}}^\dagger\left(\mathbf{A}_{x}\frac{\partial{\widetilde{\mathbf{Q}}}}{\partial x}\right)-\left(\mathbf{A}_{x}\frac{\partial{\widetilde{\mathbf{Q}}}}{\partial x}\right)^\dagger \mathbf{Q}\right] \right)dxdydz +  \int_{\widetilde{\Gamma}}\left(\frac{1}{2}\left[\widetilde{\mathbf{Q}}^{\dagger}{A}_x \widetilde{\mathbf{Q}}\right] \Big|_{-1}^{1}\right)dydz \\
&+ \int_{\Omega}\left(\widetilde{\mathbf{Q}}^{\dagger} \mathbf{P}^{-1}\widetilde{\mathbf{F}} \right)dxdydz.
  \end{split}
  \end{equation}
  }
  Adding the product \eqref{eq:weak_Laplace_1D_1} to its  complex conjugate,  the spatial derivative vanishes, yielding
  {
  \begin{equation}\label{eq:weak_Laplace_1D_2}
\begin{split}
\|\sqrt{\Re(s S_x)}\widetilde{\mathbf{Q}} (\cdot ,\cdot ,\cdot , s)\|_{P}^2  = \frac{1}{2}\left[\left(\widetilde{\mathbf{Q}},\widetilde{\mathbf{F}} \right)_{{P}}  + \left(\widetilde{\mathbf{F}},\widetilde{\mathbf{Q}} \right)_{{P}}\right] + \frac{1}{2}\int_{\widetilde{\Gamma}}\left(\left[\widetilde{\mathbf{Q}}^{\dagger}{A}_x \widetilde{\mathbf{Q}}\right] \Big|_{-1}^{1}\right)dydz.
  \end{split}
  \end{equation}
  }
   From \eqref{eq:BT_Negative}, note that the boundary terms are never positive, $\widetilde{\mathrm{BT}} \le 0$.
  Using Cauchy-Schwarz inequality and the fact \eqref{eq:BT_Negative}  in the right hand side of \eqref{eq:weak_Laplace_1D_2} completes the proof.
 \end{proof}

\begin{remark}\label{remark:pml_strip}
It is important to note that   similar energy  estimates \eqref{eq:energy_estimate_pml_laplace} are also valid for the  PML strip problems in the $y$-axis or $z$-axis, that is  when $ d_y = d>0, d_x = d_z=0$ or  $ d_z = d>0, d_x = d_y=0$.
\end{remark}

\subsection{The PML  edge problem}
We consider specifically the  $xy$--edge PML  problem, that is \eqref{eq:elastic_pml_1A}-\eqref{eq:elastic_pml_2A} with $d_x  = d_y = d>0$ and $d_z = 0$, $\partial /\partial z =0$.  The analysis can be extended to $xz$--edge and $yz$--edge PML problems. 
If  $d_x  = d_y = d>0$ and $d_z = 0$, then we have $S_y = S_x$ and $S_z = 1$.
This simplification results in  the 2D $xy$-edge PML problem, 
\begin{equation}\label{eq:weak_Laplace_2D}
\begin{split}
 \left(sS_x\right) \mathbf{P}^{-1} \widetilde{\mathbf{Q}}  =  \sum_{\xi = x, y}\mathbf{A}_{\xi}\frac{\partial{\widetilde{\mathbf{Q}} }}{\partial \xi}  + \mathbf{P}^{-1}\widetilde{\mathbf{F}}.
  \end{split}
  \end{equation}
Note also that equation \eqref{eq:weak_Laplace_2D} lives in a 3D domain define in \eqref{eq:physical_domain}, but have been simplified by restricting the initial data and the forcing $\widetilde{\mathbf{F}}$  to functions that vary only in 2D, the $xy$-plane.
\begin{theorem}\label{theo:PML_2D_Edge_Laplace}
Consider the 2D   PML equation \eqref{eq:weak_Laplace_2D} in the Laplace space, with  piecewise constant $d_x(x) = d_y(y) = d\ge 0$ and  $\alpha_x = \alpha_y = \alpha\ge 0$,  subject to the boundary conditions \eqref{eq:BC_General2_Laplace},  with $ |\gamma_\eta| \le 1$  at $x = \pm 1$ and at $y = \pm 1$, and the interface condition \eqref{eq:elastic_elastic_interface_Laplace} at discontinuities of $d_x(x)$, $d_y(y)$. For all $s$ such that  $\Re(s) \ge a> 0$ we have
{
\begin{equation}\label{eq:energy_estimate_pml_laplace_edge}
\begin{split}
&\|\sqrt{\Re(s S_x)}\widetilde{\mathbf{Q}} (\cdot ,\cdot ,\cdot , s)\|_{P}^2 \le \|\widetilde{\mathbf{Q}} (\cdot ,\cdot ,\cdot , s)\|_{P} \|\widetilde{\mathbf{F}} (\cdot ,\cdot ,\cdot , s)\|_{P}^2 + \widetilde{\mathrm{BT}},  \quad \Re(s S_x) \ge a > 0,\\
&  \widetilde{\mathbf{F}} =     \mathbf{P}\left(S_x \widetilde{\mathbf{F}}_{Q}  -  \sum_{\xi = x, y}\frac{d }{s + \alpha} \widetilde{\mathbf{F}}_{w_\xi}\right) , 
\quad
\widetilde{\mathrm{BT}} = \sum_{\xi = x, y}\int_{\widetilde{\Gamma}} \frac{1}{2}\left[\widetilde{\mathbf{Q}}^\dagger\mathbf{A}_{\xi} \widetilde{\mathbf{Q}}\right]\Big|_{-1}^{1}\frac{dxdydz}{d\xi}  \le 0.
\end{split}
\end{equation}
}
\end{theorem}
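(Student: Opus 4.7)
The plan is to follow the same energy argument as in Theorem~\ref{theo:PML_1D_Strip_Laplace}, taking advantage of the crucial simplification that, because $d_x = d_y = d$ and $\alpha_x = \alpha_y = \alpha$, we have $S_x = S_y$ identically, so the factors $S_x/S_\xi$ appearing in \eqref{eq:elastic_pml_3D_Laplace} for $\xi = x,y$ collapse to $1$, leaving exactly the symmetric form \eqref{eq:weak_Laplace_2D}. The presence of the common multiplier $sS_x$ on the left-hand side is what will ultimately generate the weight $\Re(sS_x)$ in the norm on the left of \eqref{eq:energy_estimate_pml_laplace_edge}, and it is the symmetric structure of the spatial part $\sum_{\xi=x,y}\mathbf{A}_\xi \partial_\xi$ that will make the interior volume terms disappear.

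First I would reduce to the case of constant $d$ and $\alpha$: wherever $d_x, d_y$ are only piecewise constant, I split $\Omega$ along the discontinuity surfaces, apply the argument piece by piece, and invoke the interface identity \eqref{eq:simplify_4_Laplace} from \eqref{eq:elastic_elastic_interface_Laplace} to show that the interior interface contributions cancel exactly, exactly as in the strip case. Then I would take the $L^2_P$ inner product of \eqref{eq:weak_Laplace_2D} against $\widetilde{\mathbf{Q}}$ on the left, integrate by parts in each of the two directions $\xi = x,y$ (using that $\mathbf{A}_\xi = \mathbf{A}_\xi^T$ is real symmetric, hence Hermitian), and add the resulting identity to its complex conjugate. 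The volume derivative terms in $x$ and in $y$ each cancel separately, and on the left we are left with $2\Re(sS_x)\,\widetilde{\mathbf{Q}}^\dagger \mathbf{P}^{-1}\widetilde{\mathbf{Q}}$, producing the weighted norm $\|\sqrt{\Re(sS_x)}\widetilde{\mathbf{Q}}\|_P^2$.

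What remains are the boundary contributions at $x = \pm 1$ and at $y = \pm 1$ together with the forcing term. The boundary contributions are precisely the sum over $\xi = x,y$ of the quantities $\tfrac{1}{2}[\widetilde{\mathbf{Q}}^\dagger \mathbf{A}_\xi \widetilde{\mathbf{Q}}]\big|_{-1}^{1}$, and by \eqref{eq:BT_Negative} each of them is nonpositive provided $|\gamma_\eta|\le 1$ on all faces, so $\widetilde{\mathrm{BT}} \le 0$. The forcing term is handled by Cauchy--Schwarz applied in the weighted inner product \eqref{eq:elastic_scalarproduct_laplacePML}, giving $\tfrac{1}{2}[(\widetilde{\mathbf{Q}},\widetilde{\mathbf{F}})_P + (\widetilde{\mathbf{F}},\widetilde{\mathbf{Q}})_P] \le \|\widetilde{\mathbf{Q}}\|_P \|\widetilde{\mathbf{F}}\|_P$. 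Combining these two bounds yields \eqref{eq:energy_estimate_pml_laplace_edge}; positivity $\Re(sS_x) \ge a > 0$ is inherited from \eqref{eq:scaled_variables_3} with $a = \Re s$.

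The main obstacle, and the reason this argument is restricted to equal damping, is the step where the derivative volume terms have to cancel after conjugation. If $d_x \ne d_y$ or $\alpha_x \ne \alpha_y$ one would instead be working with \eqref{eq:elastic_pml_3D_Laplace}, whose spatial operator carries the asymmetric multipliers $S_x/S_\xi$ between the different directions; integration by parts would then produce residual volume terms involving derivatives of these multipliers that do not have a definite sign, destroying the energy identity. Thus the edge result genuinely relies on the symmetry $S_x = S_y$, and with that symmetry in place the proof is a direct two-dimensional analogue of Theorem~\ref{theo:PML_1D_Strip_Laplace}.
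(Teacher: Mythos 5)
Your proposal is correct and follows essentially the same route as the paper's proof in Appendix C: multiply \eqref{eq:weak_Laplace_2D} by $\widetilde{\mathbf{Q}}^{\dagger}$, integrate by parts in $x$ and $y$, add the complex conjugate so the volume terms cancel, then bound the forcing by Cauchy--Schwarz and the boundary terms by \eqref{eq:BT_Negative}, with the piecewise-constant case reduced to the constant case via the interface condition \eqref{eq:elastic_elastic_interface_Laplace}, exactly as the paper does for the strip problem. The only slight imprecision is in your closing commentary: for piecewise-constant damping the multipliers $S_x/S_\xi$ are constant on each piece, so unequal damping would not produce derivative terms but rather residual volume terms of the form $\Im(S_x/S_\xi)\,\Im\int\widetilde{\mathbf{Q}}^{\dagger}\mathbf{A}_{\xi}\partial_\xi\widetilde{\mathbf{Q}}$ with no definite sign; this does not affect the validity of your proof of the stated theorem.
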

The proof  of Theorem \ref{theo:PML_2D_Edge_Laplace} has been moved to  Appendix \ref{sec:proof_theorem2}.

   \begin{remark}\label{remark:pml_edge}
It is also noteworthy   that  similar energy  estimates \eqref{eq:energy_estimate_pml_laplace_edge} are valid for the  PML edge problems in the $xz$-edge or $yz$-edge, that is  when $ d_x = d_z = d>0, d_y=0$ or  $ d_y = d_z = d>0, d_x =0$.
\end{remark}

\subsection{The PML  corner problem}

Consider the corresponding PML corner problem,  \eqref{eq:elastic_pml_1A}-\eqref{eq:elastic_pml_2A}   with $d_x  = d_y = d_z = d>0$ and $\alpha_x  = \alpha_y = \alpha_z = \alpha >0$.
Then, all PML metrics are identical $S_y = S_x = S_z$. We have $S_x/S_\xi = 1$ and 
\begin{equation}\label{eq:weak_Laplace_Corner}
\begin{split}
\left(sS_x\right) \mathbf{P}^{-1} \widetilde{\mathbf{Q}}  =  \sum_{\xi = x, y,z}\mathbf{A}_{\xi}\frac{\partial{\widetilde{\mathbf{Q}} }}{\partial \xi}  +  \mathbf{P}^{-1}\widetilde{\mathbf{F}} .
  \end{split}
  \end{equation}
\begin{theorem}\label{theo:PML_3D_Corner_Laplace}
Consider the 3D   PML equation \eqref{eq:weak_Laplace_Corner} in the Laplace space, with  piecewise constant $d_x(x) = d_y(y) = d_z(z) =d\ge 0$ and  $\alpha_x = \alpha_y =  \alpha_z =  \alpha\ge 0$,  subject to the boundary conditions \eqref{eq:BC_General2_Laplace},  with $ |\gamma_\eta| \le 1$  at $x = \pm 1$,  $y = \pm 1$ and $z = \pm 1$, and the interface condition \eqref{eq:elastic_elastic_interface_Laplace} at discontinuities of $d_x(x)$, $d_y(y)$, $d_z(z)$. For all $s$ such that  $\Re(s) \ge a > 0$ we have
%
\begin{equation}\label{eq:energy_estimate_pml_laplace_corner}
\begin{split}
&\|\sqrt{\Re(s S_x)}\widetilde{\mathbf{Q}} (\cdot ,\cdot ,\cdot , s)\|_{P}^2 \le \|\widetilde{\mathbf{Q}} (\cdot ,\cdot ,\cdot , s)\|_{P} \|\widetilde{\mathbf{F}} (\cdot ,\cdot ,\cdot , s)\|_{P} + \widetilde{\mathrm{BT}},  \quad \Re(s S_x)\ge a > 0,\\
&  \widetilde{\mathbf{F}} =     \mathbf{P}\left(S_x \widetilde{\mathbf{F}}_{Q}  -   \sum_{\xi = x, y,z}\frac{d }{s + \alpha}\widetilde{\mathbf{F}}_{w_\xi}\right) ,  
\quad
\widetilde{\mathrm{BT}} = \sum_{\xi = x, y, z}\int_{\widetilde{\Gamma}} \frac{1}{2}\left[\widetilde{\mathbf{Q}}^\dagger\mathbf{A}_{\xi} \widetilde{\mathbf{Q}}\right]\Big|_{-1}^{1}\frac{dxdydz}{d\xi}  \le 0. 
\end{split}
\end{equation}
\end{theorem}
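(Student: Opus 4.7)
The plan is to mirror the proofs of Theorems \ref{theo:PML_1D_Strip_Laplace} and \ref{theo:PML_2D_Edge_Laplace}, exploiting the key simplification that when all three PML metrics coincide, $S_x=S_y=S_z$, every ratio $S_x/S_\xi$ appearing in \eqref{eq:elastic_pml_3D_Laplace} collapses to $1$. Consequently the spatial flux structure in \eqref{eq:weak_Laplace_Corner} is identical to that of the undamped elastodynamics system, with all PML modification absorbed into the single scalar factor $sS_x$ on the left. This is precisely the property that makes the corner regime the one genuinely 3D PML-active configuration in which an energy estimate of this form is available.

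First I would reduce to constant $d_x=d_y=d_z=d$ by splitting $\Omega$ along the discontinuities of $d$; the cross-terms generated at these interior surfaces are annihilated by the interface relations \eqref{eq:elastic_elastic_interface_Laplace} via the identity \eqref{eq:simplify_4_Laplace}, exactly as in the strip case. Next, take the weighted scalar product of \eqref{eq:weak_Laplace_Corner} with $\widetilde{\mathbf{Q}}^\dagger$ from the left and integrate over $\Omega$. Integrating by parts in each coordinate direction, and using the symmetry of $\mathbf{A}_\xi$ to split each $\widetilde{\mathbf{Q}}^\dagger \mathbf{A}_\xi \partial_\xi \widetilde{\mathbf{Q}}$ symmetrically, yields a skew-Hermitian volume expression plus the boundary flux $\tfrac{1}{2}\sum_\xi \int_{\widetilde{\Gamma}} [\widetilde{\mathbf{Q}}^\dagger \mathbf{A}_\xi \widetilde{\mathbf{Q}}]\big|_{-1}^{1}\,dxdydz/d\xi$ together with a source contribution involving $(\widetilde{\mathbf{Q}},\widetilde{\mathbf{F}})_{P}$. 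This is the direct three-dimensional analogue of equation \eqref{eq:weak_Laplace_1D_1}.

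Adding this identity to its complex conjugate then annihilates the skew-Hermitian volume term (because $\mathbf{A}_\xi$ and $\mathbf{P}^{-1}$ are real symmetric), doubles the Hermitian boundary quadratic form, and produces $2\Re(sS_x)\int \widetilde{\mathbf{Q}}^\dagger \mathbf{P}^{-1}\widetilde{\mathbf{Q}}\,dxdydz$ on the left-hand side, which is a fixed positive multiple of $\|\sqrt{\Re(sS_x)}\,\widetilde{\mathbf{Q}}\|_P^2$. The resulting boundary contribution is exactly $\widetilde{\mathrm{BT}}$ as displayed in \eqref{eq:energy_estimate_pml_laplace_corner}, and it is negative semi-definite under $|\gamma_\eta|\le 1$ by \eqref{eq:BT_Negative} applied on all six faces; the source contribution is controlled by Cauchy--Schwarz in the weighted inner product, $\tfrac{1}{2}[(\widetilde{\mathbf{Q}},\widetilde{\mathbf{F}})_P+(\widetilde{\mathbf{F}},\widetilde{\mathbf{Q}})_P]\le \|\widetilde{\mathbf{Q}}\|_P\|\widetilde{\mathbf{F}}\|_P$. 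Collecting these bounds yields the asserted estimate \eqref{eq:energy_estimate_pml_laplace_corner}.

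The only substantive step beyond bookkeeping is confirming $\Re(sS_x)>0$ so that $\sqrt{\Re(sS_x)}$ is a legitimate real weight; this is immediate from \eqref{eq:scaled_variables_3} with $a=\Re(s)>0$ and $d,\alpha\ge 0$, which actually gives $\Re(sS_x)\ge a$ uniformly on the corner region. There is no genuine technical obstacle beyond those already dispatched in the strip and edge cases: the corner proof is essentially a verbatim higher-dimensional repetition of the argument for Theorem \ref{theo:PML_1D_Strip_Laplace}, and it closes precisely because the assumption $d_x=d_y=d_z$, $\alpha_x=\alpha_y=\alpha_z$ preserves the single-metric structure that let the 1D and 2D arguments succeed; without this structural coincidence the non-self-adjoint mixing of the $S_x/S_\xi$ factors would prevent the skew-Hermitian cancellation on which the whole estimate rests.
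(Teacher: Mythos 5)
Your proposal is correct and follows essentially the same route as the paper's own proof: multiply \eqref{eq:weak_Laplace_Corner} by $\widetilde{\mathbf{Q}}^{\dagger}$, integrate by parts, add the complex conjugate so the skew-Hermitian volume terms cancel, then apply Cauchy--Schwarz together with the sign property \eqref{eq:BT_Negative} of the boundary terms. The preliminary reduction to constant damping via \eqref{eq:elastic_elastic_interface_Laplace}--\eqref{eq:simplify_4_Laplace} and the verification $\Re(sS_x)\ge a>0$ from \eqref{eq:scaled_variables_3} are likewise exactly the ingredients the paper relies on (stated in the strip proof and carried over implicitly), so there is no substantive difference to report.
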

The proof of Theorem \ref{theo:PML_3D_Corner_Laplace} have been moved to   Appendix \ref{sec:proof_theorem3}.


We introduce the energy norms  in the physical space 
\begin{equation}\label{eq:elastic_pml_3D_energy_physical}
\begin{split}
 \|{\mathbf{Q}} (\cdot ,\cdot ,\cdot , t)\|_{P}^2  = \|\mathcal{L}^{-1}\widetilde{\mathbf{Q}} (\cdot ,\cdot ,\cdot , s)\|_{P}^2, \quad
  \|{\mathbf{F}} (\cdot ,\cdot ,\cdot , t)\|_{P}^2  = \|\mathcal{L}^{-1}\widetilde{\mathbf{F}} (\cdot ,\cdot ,\cdot , s)\|_{P}^2 ,
  \end{split}
  \end{equation}
    where
    \begin{align}
   {\mathbf{F}} (\cdot ,\cdot ,\cdot , t) =    \left(\delta(t) + {d}_x e^{-\alpha t} \right) * \mathbf{P}{\mathbf{F}}_{Q}\left(x,y,z, t\right) - \sum_{\xi = x, y, z} d_{\xi}   e^{-\alpha t} * \mathbf{P}{\mathbf{F}}_{w_\xi} \left(x,y,z, t\right).
  \end{align}
   The derivation of the source term $ {\mathbf{F}} (\cdot ,\cdot ,\cdot , t) $ can be found in Appendix \ref{sec:source_terms}.
Here, $*$ is the convolution operator, and $\delta(t) $ is the Dirac delta distribution.
\begin{theorem}\label{Theo:estimate_PML}
Consider the energy estimate in the Laplace space
\begin{equation}\label{eq:energy_estimate_pml_laplace}
\begin{split}
& \|\widetilde{\mathbf{Q}} (\cdot ,\cdot ,\cdot , s)\|_{P}^2 \le \frac{1}{a }\|\widetilde{\mathbf{Q}} (\cdot ,\cdot ,\cdot , s)\|_{P} \|\widetilde{\mathbf{F}} (\cdot ,\cdot ,\cdot , s)\|_{P}, \quad \Re(s) \ge a  > 0.
\end{split}
\end{equation}
For any $a>0$ and $T>0$ we have
  \begin{align}\label{eq:estimate_0_0}
  \int_0^{T} e^{-2at}  \|{\mathbf{Q}} (\cdot ,\cdot ,\cdot , t)\|_{P}^2 dt \le  \frac{1}{a ^2}\int_0^{T} e^{-2at} \left( \| {\mathbf{F}} (\cdot ,\cdot ,\cdot , t)\|_{P}^2\right) dt.
  \end{align}
\end{theorem}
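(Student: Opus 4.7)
\textbf{Proof proposal for Theorem \ref{Theo:estimate_PML}.} The plan is to reduce the hypothesized Laplace-space estimate to a clean pointwise bound on $\|\widetilde{\mathbf{Q}}(\cdot,\cdot,\cdot,s)\|_P$, integrate along the vertical contour $\Re s = a$, and convert the result into a time-domain inequality via a Plancherel-type identity for the Laplace transform. First, assuming $\|\widetilde{\mathbf{Q}}\|_P \neq 0$, I divide both sides of the hypothesized estimate
\begin{equation*}
\|\widetilde{\mathbf{Q}}(\cdot,\cdot,\cdot,s)\|_P^2 \le \frac{1}{a}\,\|\widetilde{\mathbf{Q}}(\cdot,\cdot,\cdot,s)\|_P\,\|\widetilde{\mathbf{F}}(\cdot,\cdot,\cdot,s)\|_P
\end{equation*}
by $\|\widetilde{\mathbf{Q}}\|_P$ and square the result to obtain the pointwise bound $\|\widetilde{\mathbf{Q}}(\cdot,\cdot,\cdot,s)\|_P^2 \le a^{-2}\,\|\widetilde{\mathbf{F}}(\cdot,\cdot,\cdot,s)\|_P^2$ valid for every $s = a + ib$ with $b \in \mathbb{R}$. (If $\|\widetilde{\mathbf{Q}}\|_P = 0$ the conclusion is trivial at that $s$.)

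Next I would integrate this bound in $b$ over the real line, giving
\begin{equation*}
\int_{-\infty}^{\infty}\|\widetilde{\mathbf{Q}}(\cdot,\cdot,\cdot,a+ib)\|_P^2\,db \;\le\; \frac{1}{a^{2}}\int_{-\infty}^{\infty}\|\widetilde{\mathbf{F}}(\cdot,\cdot,\cdot,a+ib)\|_P^2\,db .
\end{equation*}
By the Plancherel identity for the Laplace transform applied componentwise (i.e. the identity $\int_0^{\infty}e^{-2at}|f(t)|^2\,dt = (2\pi)^{-1}\int_{-\infty}^{\infty}|\widetilde{f}(a+ib)|^2\,db$ for causal $f \in L^2_{\mathrm{loc}}$), and using the definition of the weighted norm \eqref{eq:elastic_energy_laplacePML} which is just a spatial integral of a quadratic form with a constant-in-$s$ weight, the contour integrals on each side equal $2\pi \int_0^{\infty}e^{-2at}\|\mathbf{Q}(\cdot,\cdot,\cdot,t)\|_P^2\,dt$ and $2\pi \int_0^{\infty}e^{-2at}\|\mathbf{F}(\cdot,\cdot,\cdot,t)\|_P^2\,dt$ respectively, via Fubini to swap the $b$- and spatial integrals.

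Finally, to pass from the $[0,\infty)$ integrals produced by Plancherel to the $[0,T]$ integrals stated in \eqref{eq:estimate_0_0}, I would invoke causality of the PML system \eqref{eq:elastic_pml_1A}--\eqref{eq:elastic_pml_2A}: replace the source $\mathbf{F}$ by its truncation $\mathbf{F}_T(t) := \mathbf{F}(t)\chi_{[0,T]}(t)$; the hypothesized estimate applies equally to the truncated problem whose solution $\mathbf{Q}_T$ agrees with $\mathbf{Q}$ on $[0,T]$ by causality. Then
\begin{equation*}
\int_0^{T}e^{-2at}\|\mathbf{Q}\|_P^2\,dt \le \int_0^{\infty}e^{-2at}\|\mathbf{Q}_T\|_P^2\,dt \le \frac{1}{a^2}\int_0^{\infty}e^{-2at}\|\mathbf{F}_T\|_P^2\,dt = \frac{1}{a^2}\int_0^T e^{-2at}\|\mathbf{F}\|_P^2\,dt .
\end{equation*}

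The main obstacle I anticipate is the rigorous justification of the two soft steps: (i) the vector-valued Plancherel identity in the weighted norm $\|\cdot\|_P$, which requires a Fubini argument and enough integrability of $\widetilde{\mathbf{Q}}$, $\widetilde{\mathbf{F}}$ on the line $\Re s = a$; and (ii) the causality statement $\mathbf{Q}_T \equiv \mathbf{Q}$ on $[0,T]$ for the auxiliary-variable PML system, which is not a pure hyperbolic system but has lower-order damping terms. Causality still holds because the principal part is hyperbolic and the auxiliary-variable ODEs are pointwise in space, but it needs to be stated. Once these are granted, the chain of inequalities above delivers \eqref{eq:estimate_0_0}.
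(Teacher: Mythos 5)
Your proposal is correct and follows essentially the same route as the paper's own proof: reduce \eqref{eq:energy_estimate_pml_laplace} to the pointwise bound $\|\widetilde{\mathbf{Q}}\|_{P}^2 \le a^{-2}\|\widetilde{\mathbf{F}}\|_{P}^2$, apply the Laplace--Plancherel identity on the line $\Re{s}=a$, and truncate to $[0,T]$ by causality (the paper's ``the future cannot influence the past''). In fact you make explicit two details the paper leaves implicit --- the degenerate case $\|\widetilde{\mathbf{Q}}\|_P=0$ and the source-truncation argument behind the causality step --- so no changes are needed.
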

\begin{proof}
Squaring both sides of \eqref{eq:energy_estimate_pml_laplace} having
 \begin{equation}\label{eq:energy_estimate_pml_laplace_corner_cont_0}
\begin{split}
  \|\widetilde{\mathbf{Q}} (\cdot ,\cdot ,\cdot , s)\|_{P} ^2  \le   \frac{1}{a  ^2}  \|\widetilde{\mathbf{F}} (\cdot ,\cdot ,\cdot , s)\|_{P}^2.
  \end{split}
\end{equation}
We will use the identity \cite{GustafssonKreissOliger1995} 
 \[
  \int_0^{+\infty} e^{-2at}\| f(\cdot,\cdot,\cdot,t)\|_{P}^2dt= \frac{1}{2\pi}\int_{-\infty}^{+\infty} \|\widetilde{f}\left(\cdot,\cdot,\cdot, a+ib\right)\|_{P}^2db,
  \]
 to obtain
  \begin{align}\label{eq:estimate_0_0}
    \int_0^{\infty} e^{-2at}  \|{\mathbf{Q}} (\cdot ,\cdot ,\cdot , t)\|_{P}^2 dt \le  \frac{1}{a ^2}  \int_0^{\infty} e^{-2at} \left( \| {\mathbf{F}} (\cdot ,\cdot ,\cdot , t)\|_{P}^2\right) dt.
  \end{align}
 Finally, the result follows by the usual argument that the future cannot influence the past.
\end{proof}

Since the boundary terms $\widetilde{\mathrm{BT}}$ in \eqref{eq:energy_estimate_pml_laplace_corner}, \eqref{eq:energy_estimate_pml_laplace_edge} and \eqref{eq:energy_estimate_pml_laplace_strip} are never positive, we can use Theorem \ref{Theo:estimate_PML} to invert the the estimates in Theorem \ref{theo:PML_1D_Strip_Laplace}, \ref{theo:PML_2D_Edge_Laplace} and \ref{theo:PML_3D_Corner_Laplace} to get an energy estimate in the physical space.
The estimate seemingly allows the energy of the solution to grow  exponentially with time for general data. However, by choosing $a>0$ in relation to the length 
of the time interval of interest, a bound  involving only algebraic growth in time follows.
\section{The discontinuous Galerkin spectral element method}
In this section, we present the DG approximations for the undamped system \eqref{eq:linear_wave} and the PML  \eqref{eq:elastic_pml_1A}-\eqref{eq:elastic_pml_2A}, subject to the boundary conditions  \eqref{eq:BC_General2} and the interface condition \eqref{eq:elastic_elastic_interface}. We will  use the physically motivated numerical fluxes develop in \cite{DuruGabrielIgel2017,Duru_exhype_2_2019} to patch DG elements into the global domain. The physically motivated numerical flux is upwind by construction and gives an energy estimate analogous to \eqref{eq:energy_conservation}.  The boundary and inter-element procedure will begin with the integral form \eqref{eq:product_1}.
As we will see later, the procedure and analysis carries over when numerical approximations are introduced.

\subsection{Weak boundary and inter-element procedures, and the energy identity}
We begin by discretizing the domain $(x,y,z)  \in \Omega = [-1, 1]\times[-1, 1]\times[-1, 1]$ into $K\times L\times M$ elements denoting the $klm$-th element by $\Omega_{klm} = [x_k, x_{k+1}]\times [y_l, y_{l+1}]\times [z_m, z_{m+1}]$, where $k = 1, 2, \dots, K$, $l = 1, 2, \dots, L$, $m = 1, 2, \dots, M$ with $x_1 = -1$,  $y_1 = -1$, $z_1 = -1$ and $x_{K+1} = 1$, $y_{L+1} = 1$, $z_{M+1} = 1$.

We map the element $\Omega_{lmn} = [x_k, x_{k+1}]\times [y_l, y_{l+1}]\times [z_m, z_{m+1}]$ to a reference element $(q, r, s) \in \widetilde{\Omega} = [-1, 1]^{3}$ by the linear transformation 
{
\begin{align}\label{eq:transf}
x = x_k + \frac{\Delta{x}_k}{2}\left(1 + q \right),  \quad
y = y_l + \frac{\Delta{y}_l}{2}\left(1 + r \right), \quad 
z = z_m + \frac{\Delta{z}_m}{2}\left(1 + s \right),  
\end{align}
}
with
{
\[
\quad \Delta{x}_k = x_{k+1} - x_k, \quad  \Delta{y}_l = y_{l+1} - y_l, \quad  \Delta{z}_m = y_{m+1} - y_m.
\]
}
The Jacobian of the transformation is
\[
 \quad J = \frac{\Delta{x}_k}{2}\frac{\Delta{y}_l }{2}\frac{\Delta{z}_m}{2} > 0.
\]
Note that the non-zero metric derivatives are
\[
q_x = \frac{2}{\Delta{x}_k}, \quad r_y = \frac{2}{\Delta{y}_l}, \quad s_z = \frac{2}{\Delta{z}_m}.
\]
All other metric derivatives vanish identically.
The volume integral yields
\begin{align}
\int_{\Omega} f(x,y,z) dxdydz = \sum_{k=1}^{K}\sum_{l=1}^{L}\sum_{m=1}^{M}\int_{\Omega_{klm}} f(x,y,z) dxdydz = \sum_{k=1}^{K}\sum_{l=1}^{L}\sum_{m=1}^{M}\int_{\widetilde{\Omega}} f(q,r,s) Jdqdrds,
\end{align}
and the transformed gradient operator is
\[
{\grad}  := \left(\frac{\partial}{\partial x}, \frac{\partial}{\partial y}, \frac{\partial}{\partial z}\right)^T= \left(q_x \frac{\partial}{\partial q}, r_y \frac{\partial}{\partial r}, s_z \frac{\partial}{\partial s}\right)^T.
\]
We introduce
\begin{align}\label{eq:rename_var}
& \mathbf{w}_{q} = \mathbf{w}_{x}, \quad \mathbf{w}_{r} = \mathbf{w}_{y}, \quad \mathbf{w}_{s} = \mathbf{w}_{z}, \quad \mathbf{A}_{q} = q_x \mathbf{A}_{x} , \quad \mathbf{A}_{r} = r_y \mathbf{A}_{y} , \quad \mathbf{A}_{s} = s_z \mathbf{A}_{z}, \nonumber \\
& {d}_{q}(q) =  {d}_{x}(x) , \quad {d}_{r}(r) =  {d}_{y}(y) , \quad {d}_{s}(s) =  {d}_{z}(z), \quad {\alpha}_{q}(q) =  {\alpha}_{x}(x) , \quad {\alpha}_{r}(r) =  {\alpha}_{y}(y) , \quad {\alpha}_{s}(s) =  {\alpha}_{z}(z).
\end{align}
Therefore, the  elemental integral form reads
\begin{align}\label{eq:weak_0}
\int_{\widetilde{\Omega}}\left[\boldsymbol{\phi}^T\mathbf{P}^{-1}\frac{\partial \mathbf{Q}}{\partial t}\right]Jdqdrds &= \sum_{\xi = q, r, s}\int_{\widetilde{\Omega}}\boldsymbol{\phi}^T\left(\left[\mathbf{A}_{\xi}\frac{\partial{\mathbf{Q}}}{\partial \xi}  - d_{\xi}\left(\xi\right)\mathbf{w}_{\xi}\right]\right)Jdqdrds, 
\end{align}
\begin{equation}\label{eq:weak_1}
\begin{split}
\int_{\widetilde{\Omega}}\left[\boldsymbol{\phi}^T\frac{\partial{\mathbf{w}_{\xi}}}{\partial t}\right]Jdqdrds 
& =  \int_{\widetilde{\Omega}}\boldsymbol{\phi}^T\left(\mathbf{A}_{\xi}\frac{\partial{\mathbf{Q}}}{\partial \xi} -   \left(\alpha_{\xi}(\xi) + d_{\xi}\left(\xi\right)\right)\mathbf{w}_{\xi}\right)Jdqdrds .
  \end{split}
  \end{equation}
Next   we consider the element boundaries, $\xi = -1, 1$,  and generate boundary and interface data $\widehat{\mathbf{T}}$, $ \widehat{\mathbf{v}}$. The hat-variables encode the tractions and particle velocities at the element boundaries. Please see  \cite{DuruGabrielIgel2017,Duru_exhype_2_2019} for more elaborate discussions.
The next step is to construct fluctuations by penalizing data, that is hat-variables, against the ingoing characteristics only.
  That is, for each $\eta = x, y, z$ with $Z_\eta > 0$, then
    \begin{align*}
  &{G}_\eta = \frac{1}{2} {Z}_\eta \left({v}_\eta - \widehat{{v}}_\eta \right)+ \frac{1}{2}\left({T}_\eta  - \widehat{{T}}_\eta \right)\Big|_{\xi = 1},   
  \quad 
  &{G}_\eta  = \frac{1}{2} {Z}_\eta\left({v}_\eta  - \widehat{{v}}_\eta \right)- \frac{1}{2}\left({T}_\eta  - \widehat{{T}}_\eta \right)\Big|_{\xi = -1}.
  \end{align*}
Next we define the flux fluctuation vectors obeying the eigen-structure of the elastic wave equation.
  \begin{align}\label{eq:num_flux_fluctuations}
  \mathbf{FL}_{\xi}
  =
  \begin{pmatrix}
\mathbf{G} \\
-\boldsymbol{a}_{\xi}^T \mathbf{Z}^{-1} {\mathbf{G}} 
\end{pmatrix},
\quad
 \mathbf{FR}_{\xi}
  =
  \begin{pmatrix}
\mathbf{G} \\
\boldsymbol{a}_{\xi}^T  \mathbf{Z}^{-1} {\mathbf{G}} 
\end{pmatrix},
\quad
\mathbf{Z}
  =
  \begin{pmatrix}
Z_x & 0 & 0\\
0 & Z_y & 0\\
0 & 0 & Z_z\\
\end{pmatrix}.
  \end{align}
    To patch the elements together, we append the flux fluctuation vectors $\mathbf{FL}_{\xi}$ and $\mathbf{FR}_{\xi}$ to \eqref{eq:weak_0}-\eqref{eq:weak_1} and integrate over the element face, we have
   \begin{align}\label{eq:weak_01}
\int_{\widetilde{\Omega}}\left[\boldsymbol{\phi}^T\mathbf{P}^{-1}\frac{\partial \mathbf{Q}}{\partial t}\right]Jdqdrds &= \sum_{\xi = x, y, z}\int_{\widetilde{\Omega}}\boldsymbol{\phi}^T\left(\left[\mathbf{A}_{\xi}\frac{\partial{\mathbf{Q}}}{\partial \xi}  - d_{\xi}\left(\xi\right)\mathbf{w}_{\xi}\right]\right)Jdqdrds \nonumber \\
&+ \sum_{\xi = x, y, z}\int_{\widetilde{\Gamma}}\left(\boldsymbol{\phi}^T(-1)\mathbf{FL}_{\xi} + \boldsymbol{\phi}^T(1)\mathbf{FR}_{\xi} \right)J\frac{dqdrds}{d\xi},
\end{align}
\begin{equation}\label{eq:weak_11}
\begin{split}
\int_{\widetilde{\Omega}}\left[\boldsymbol{\phi}^T\frac{\partial{\mathbf{w}_{\xi}}}{\partial t}\right]Jdqdrds 
& =  \int_{\widetilde{\Omega}}\boldsymbol{\phi}^T\left(\mathbf{A}_{\xi}\frac{\partial{\mathbf{Q}}}{\partial \xi} -   \left(\alpha_{\xi}(\xi) + d_{\xi}\left(\xi\right)\right)\mathbf{w}_{\xi}\right)Jdqdrds \\
 &+ \underbrace{ \theta_\xi \int_{\widetilde{\Gamma}}\left(\boldsymbol{\phi}^T(-1)\mathbf{FL}_{\xi} + \boldsymbol{\phi}^T(1)\mathbf{FR}_{\xi} \right)J\frac{dqdrds}{d\xi}}_{\text{PML stabilizing flux fluctuation}}.
  \end{split}
  \end{equation}
Here, $\theta_\xi$ are PML stabilizing parameters to be determined by requiring stability of the discrete PML, in the sense corresponding to Theorems \ref{theo:PML_1D_Strip_Laplace}, \ref{theo:PML_2D_Edge_Laplace}, \ref{theo:PML_3D_Corner_Laplace} and \ref{Theo:estimate_PML}.

We will now demonstrate the stability of the numerical flux fluctuations $\mathbf{FL}_{\xi}$ and $\mathbf{FR}_{\xi}$. We note, however, that  since we have not introduced any numerical approximation the fluctuations vanish identically, that is $\mathbf{G} = \widetilde{\mathbf{G}} = 0$ and $\mathbf{FL}_{\xi} =\mathbf{FR}_{\xi} = 0$. We will see later that the analysis will follow through when numerical approximations are introduced.

We will begin by noting the identities
\begin{align}
\mathbf{Q}^T\mathbf{FL}_{\xi} = \mathbf{v}^T\mathbf{G} - \mathbf{T}^T{\mathbf{Z}}^{-1}{\mathbf{G}}, \quad \mathbf{Q}^T\mathbf{FR}_{\xi} = \mathbf{v}^T\mathbf{G} + \mathbf{T}^T{\mathbf{Z}}^{-1}{\mathbf{G}},
\end{align}
and 
{
\small
\begin{equation}\label{eq:identity_pen}
\begin{split}
 \left(\mathbf{v}^T \mathbf{G} - \mathbf{T}^T {\mathbf{Z}}^{-1}{\mathbf{G}} + \mathbf{v}^T\mathbf{T}\right)\Big|_{\xi = -1} &= \sum_{\eta = x, y, z} \left( v_\eta G_\eta    - \frac{1}{Z_\eta }T_\eta   G_\eta + v_\eta  T_\eta \right)\Big|_{\xi = -1} 
= \sum_{\eta = x, y, z} \left(\frac{1}{Z_\eta  }|G_\eta |^2  + \widehat{T}_\eta \widehat{v}_\eta\right)\Big|_{\xi = -1},\\
\left(\mathbf{v}^T \mathbf{G} + \mathbf{T}^T {\mathbf{Z}}^{-1}{\mathbf{G}} - \mathbf{v}^T\mathbf{T}\right)\Big|_{\xi = 1}  &= \sum_{\eta = x, y, z} \left(v_\eta  G_\eta   + \frac{1}{Z_\eta }T_\eta  G_\eta - v_\eta  T_\eta \right)\Big|_{\xi = 1} 
= \sum_{\eta = x,y,z} \left(\frac{1}{Z_\eta  }|G_\eta |^2  - \widehat{T}_\eta \widehat{v}_\eta \right)\Big|_{\xi = 1}.
\end{split}
\end{equation}
}

To simplify the analysis we will consider a DG approximation with  two elements only, separated at $x = 0$. We will denote the corresponding fields, material parameters and energies in the positive/negative sides of the interface with the superscripts $+/-$. Each of the two elements have 5 external boundaries where the boundary conditions \eqref{eq:BC_General2}  are imposed, and one internal boundary connecting the two elements where the interface condition \eqref{eq:elastic_elastic_interface} is imposed.  The boundary and interface conditions are implemented using the numerical flux fluctuations \eqref{eq:num_flux_fluctuations}. It is also noteworthy that the analysis can be extended to a  DG approximation consisting of many elements.

Introduce the fluctuation term
  \begin{align}\label{eq:weak_fluctuate}
  F_{luc}\left({G},  Z\right) = - \sum_{\xi = q, r, s}\sum_{\eta = x,y,z}\int_{\widetilde{\Gamma}}\sqrt{\xi_x^2 + \xi_y^2 + \xi_z^2}\left(\left(\left(\frac{1}{Z_{\eta}}|{G}_\eta |^2\right)\Big|_{\xi = -1} + \left(\frac{1}{Z_{\eta}}|{G}_\eta |^2\right)\Big|_{\xi = 1}  \right)\right)J\frac{dqdrds}{d\xi} \le 0.
  \end{align}
   For the two elements model we introduce the external boundary terms 
 \begin{align*}
\mathrm{BTs}\left(\widehat{v}_{\eta}^{-}, \widehat{T}_{\eta}^{-}\right)  = & - \sum_{\xi = r, s}\int_{\widetilde{\Gamma}}\sum_{\eta = x,y,z}\left(\left(\sqrt{\xi_x^2 + \xi_y^2 + \xi_z^2}J \widehat{T}_\eta^{-} \widehat{v}_\eta^{-} \right)\Big|_{\xi = -1} - \left(\sqrt{\xi_x^2 + \xi_y^2 + \xi_z^2}J \widehat{T}_\eta^{-} \widehat{v}_\eta^{-}  \right)\Big|_{\xi = 1}\right)\frac{dqdrds}{d\xi} \\
&- \int_{\widetilde{\Gamma}}\sum_{\eta = x,y,z}\left(\left(\sqrt{q_x^2 + q_y^2 + q_z^2}J \widehat{T}_\eta^{-} \widehat{v}_\eta^{-}  \right)\Big|_{q = -1}\right){drds} \le 0,
 \end{align*}
 \begin{align*}
\mathrm{BTs}\left(\widehat{v}_{\eta}^{+}, \widehat{T}_{\eta}^{+}\right)  = & - \sum_{\xi = r, s}\int_{\widetilde{\Gamma}}\sum_{\eta = x,y,z}\left(\left(\sqrt{\xi_x^2 + \xi_y^2 + \xi_z^2}J \widehat{T}_\eta^{+} \widehat{v}_\eta^{+} \right)\Big|_{\xi = -1} - \left(\sqrt{\xi_x^2 + \xi_y^2 + \xi_z^2}J \widehat{T}_\eta^{+} \widehat{v}_\eta^{+}  \right)\Big|_{\xi = 1}\right)\frac{dqdrds}{d\xi} \\
&+ \int_{\widetilde{\Gamma}}\sum_{\eta = x,y,z}\left(\left(\sqrt{q_x^2 + q_y^2 + q_z^2}J \widehat{T}_\eta^{+} \widehat{v}_\eta^{+}  \right)\Big|_{q = 1}\right){drds} \le 0,
 \end{align*}
 and the interface term
  \begin{align*}
  \mathrm{IT}_s(\widehat{v}^{\pm}, \widehat{T}^{\pm}) = - \sum_{\eta = x,y,z}\int_{\widetilde{\Gamma}}\sqrt{q_x^2 + q_y^2 + q_z^2}{\widehat{T}}_\eta  \lJump{{\widehat{v}}_\eta \rJump} J {drds} \equiv 0.
\end{align*}
   We can now prove the theorem.
 \begin{theorem}\label{theo:weak_form_statbility}
Consider the elemental weak form \eqref{eq:weak_1}. When all PML damping vanish $d_\xi = 0$, then  we have the energy identity
   {
   \small
 \begin{equation}\label{eq:weak_energy}
\begin{split}
\frac{d}{dt} \left(\|\mathbf{Q}^{-}\left(\cdot, \cdot, \cdot, t\right)\|^2_P + \|\mathbf{Q}^{+}\left(\cdot, \cdot, \cdot, t\right)\|^2_P \right) &=   \mathrm{IT}_s\left(\widehat{v}^{\pm},  \widehat{T}^{\pm}\right)  +  BT_s\left(\widehat{v}^{-}, \widehat{T}^{-}\right) + BT_s\left(\widehat{v}^{+}, \widehat{T}^{+}\right) \\
&+   F_{luc}\left({G} ^{-},  Z^{-}\right) +   F_{luc}\left({G} ^{+},  Z^{+}\right) \le 0.
\end{split}
\end{equation}
}
\end{theorem}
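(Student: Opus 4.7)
The plan is to follow the classical energy method for DG: substitute the solution itself as the test function in the weak form, integrate by parts to exploit the symmetry of the coefficient matrices, and then show that the flux fluctuations together with the boundary and interface hat-variables control the surviving surface integrals.

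First, I would take the weak form \eqref{eq:weak_01} for each element (the auxiliary equation \eqref{eq:weak_11} drops out since $d_\xi = 0$ everywhere), set $\boldsymbol{\phi}=\mathbf{Q}$, add the resulting identity to its transpose, and integrate by parts in the volume term. Because $\mathbf{A}_\xi = \mathbf{A}_\xi^T$ is constant on each element, the interior integrals cancel up to the usual skew term, leaving
\begin{equation*}
\frac{d}{dt}\|\mathbf{Q}\|_P^2 = \sum_{\xi=q,r,s}\int_{\widetilde{\Gamma}} \Bigl[\mathbf{Q}^T\mathbf{A}_\xi \mathbf{Q}\bigr|_{\xi=-1}^{\xi=1} + 2\bigl(\mathbf{Q}^T(-1)\mathbf{FL}_\xi + \mathbf{Q}^T(1)\mathbf{FR}_\xi\bigr)\Bigr]J\frac{dqdrds}{d\xi}
\end{equation*}
on each element, where I have used $\mathbf{Q}^T \mathbf{A}_\xi \mathbf{Q} = 2\mathbf{v}^T\mathbf{T}$ (after reverting from reference to physical indices with the metric factors $\sqrt{\xi_x^2+\xi_y^2+\xi_z^2}$).

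Next I would invoke the algebraic identity \eqref{eq:identity_pen}, which is the workhorse: it rewrites each of the combinations $\mathbf{v}^T\mathbf{G}\mp\mathbf{T}^T\mathbf{Z}^{-1}\mathbf{G}\pm \mathbf{v}^T\mathbf{T}$ as the sum of a manifestly nonnegative impedance-weighted square $\frac{1}{Z_\eta}|G_\eta|^2$ and a boundary data product $\widehat{T}_\eta\widehat{v}_\eta$. Applying this on each face of each element converts the combined surface contribution into exactly $F_{luc}(G,Z)$ (from the squared terms, with the correct sign) plus a sum of $\widehat{T}_\eta\widehat{v}_\eta$ evaluated on every face. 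Summing the two-element contributions and grouping the faces gives the external $\mathrm{BT}_s(\widehat{v}^\pm,\widehat{T}^\pm)$ and the internal $\mathrm{IT}_s(\widehat{v}^\pm,\widehat{T}^\pm)$ exactly as defined. At the internal face at $q=1^{-}/q=-1^{+}$ the outward normals from the two elements are opposite, which produces the jump $\lJump \widehat{v}_\eta \rJump$ multiplying $\widehat{T}_\eta$, matching $\mathrm{IT}_s$ verbatim.

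Finally the sign of each term is settled by the two identities already established for hat-variables: \eqref{eq:identity_bc} gives $\mathrm{BT}_s(\widehat{v}^\pm,\widehat{T}^\pm)\le 0$ since the hat-variables satisfy the physical boundary condition \eqref{eq:BC_General2} with $|\gamma_\eta|\le 1$, and \eqref{eq:identity_interface} gives $\mathrm{IT}_s(\widehat{v}^\pm,\widehat{T}^\pm)=0$ since the hat-variables satisfy \eqref{eq:elastic_elastic_interface}. Together with $F_{luc}\le 0$ by construction, this yields the energy identity \eqref{eq:weak_energy} and its non-positivity.

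The main obstacle, and the only nontrivial bookkeeping, is in the second step: one must carefully track the metric factors $\sqrt{\xi_x^2+\xi_y^2+\xi_z^2}$ that appear when passing from the reference-element faces to the physical traction/velocity pairing in $\mathbf{v}^T\mathbf{T}$, and simultaneously verify that the orientation of the outward normal at the shared interface $x=0$ produces the jump $\lJump\widehat{v}_\eta\rJump$ with the correct sign. Once these bookkeeping steps are done, everything else follows immediately from \eqref{eq:identity_pen}, \eqref{eq:identity_bc}, and \eqref{eq:identity_interface}.
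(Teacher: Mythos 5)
Your proposal is correct and is essentially the paper's own (implicit) argument: the paper states this theorem without a separate proof, having already laid out exactly the ingredients you assemble --- the energy method with $\boldsymbol{\phi}=\mathbf{Q}$ and symmetry of $\mathbf{A}_\xi$, the algebraic identity \eqref{eq:identity_pen} to convert the surface terms into impedance-weighted squares plus hat-variable products, and the sign facts \eqref{eq:identity_bc}, \eqref{eq:identity_interface} together with $F_{luc}\le 0$. The one wrinkle, inherited from the paper itself, is the sign convention of the flux fluctuations: for \eqref{eq:identity_pen} to deliver $F_{luc}$ with the advertised negative sign, the fluctuation terms must enter the weak form with a minus sign (as they do in the discrete analogue \eqref{eq:disc_elastic_pml_1A}), not the plus sign printed in \eqref{eq:weak_01}, so your step claiming "the correct sign" silently uses that corrected convention.
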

Note that the interface term vanishes identically $\mathrm{IT}_s\left(\widehat{v}^{\pm},  \widehat{T}^{\pm}\right) \equiv  0$,  and since we have not introduced any numerical approximation, we have ${\mathbf{v}} = \widehat{\mathbf{v}}$ and ${\mathbf{T}} = \widehat{\mathbf{T}}$, and the fluctuation terms  also vanish $F_{luc}\left({G} ^{-},  Z^{-}\right) = 0$, $F_{luc}\left({G} ^{+},  Z^{+}\right)  = 0$.
 Thus, the energy estimate \eqref{eq:weak_energy} is completely analogous to the physical energy estimate \eqref{eq:energy_conservation}. However, when numerical approximations are introduced numerical solutions can  be discontinuous across the inter-element boundaries and the surface terms will dissipate energy. The artificial dissipation will vanish though in the limit of mesh refinement.

As we are yet to introduce any numerical approximation, for the PML with $d_\xi \ge 0$, Theorems \ref{theo:PML_1D_Strip_Laplace}, \ref{theo:PML_2D_Edge_Laplace}, \ref{theo:PML_3D_Corner_Laplace} and \ref{Theo:estimate_PML} are also valid for \eqref{eq:weak_1}, for any $\theta_{\xi}$. However, we will see later in the coming sections that this is not generally true when numerical approximations are introduced.

\subsection{The Galerkin approximation}
Inside the reference element $(q, r, s) \in \widetilde{\Omega} = [-1, 1]^{3}$, approximate the elemental solution by a polynomial interpolant $\mathbf{u}(q,r,s, t)$,  and write 
\begin{equation}\label{eq:variables_elemental}
\mathbf{u}\left(q,r,s, t\right) =  \sum_{i = 1}^{P+1} \sum_{j = 1}^{P+1}\sum_{k = 1}^{P+1}\mathbf{u}_{ijk}(t) \boldsymbol{\phi}_{ijk}\left(q,r,s\right),
\end{equation}
where $\mathbf{u}_{ijk}(t)$, are the elemental degrees of freedom to be determined, 
 and $ \boldsymbol{\phi}_{ijk}(q,r,s)$ are the $ijk$-th interpolating polynomials. We consider tensor products of  nodal basis with $ \boldsymbol{\phi}_{ijk}(q,r,s) = \mathcal{L}_i(q)\mathcal{L}_j(r)\mathcal{L}_k(s)$,  where $\mathcal{L}_i(q)$, $\mathcal{L}_j(r)$, $\mathcal{L}_k(s)$, are one dimensional nodal interpolating Lagrange polynomials of degree $P$, with $ \mathcal{L}_i(q_m) = \delta_{im}$. Here, $\delta_{im}$ is the Kronecker delta,  with $\delta_{im} = 1$ if $i = m$, and $\delta_{im} = 0$ if $i \ne m$. The interpolating nodes $q_m$, $m = 1, 2, \dots, P+1$, are the nodes of a Gauss quadrature with
\begin{equation}\label{eq:quad_rule_3D}
 \sum_{i = 1}^{P+1} \sum_{j = 1}^{P+1}  \sum_{k = 1}^{P+1}f(q_i, r_j, s_k)h_ih_jh_k \approx \int_{\widetilde{\Omega}}f(q, r, s) dq dr ds,
\end{equation}
where $h_i > 0$, $h_j>0$, $h_k>0$, are the quadrature weights.
We will only use quadrature rules such  that for all polynomial integrand $f(\xi)$ of degree $\le 2P-1$, the corresponding one dimensional rule is exact, 
 $\sum_{m = 1}^{P+1} f(\xi_m)h_m = \int_{-1}^{1}f(\xi) d\xi.$
  Admissible  candidates can be Gauss-Legendre-Lobatto quadrature rule with GLL nodes, Gauss-Legendre quadrature rule with GL nodes and Gauss-Legendre-Radau quadrature rule with GLR nodes. While both endpoints, $\xi = -1, 1$, are part of   GLL quadrature nodes,  the   GLR quadrature contains only the first endpoint $\xi = -1$ as a node. Lastly,  for the GL quadrature, both endpoints, $\xi= -1, 1$, are not quadrature nodes. Note that when an endpoint is not a quadrature node, $\xi_1 \ne -1$ or $\xi_{P+1} \ne 1$, extrapolation is needed to compute numerical fluxes at the element boundary, $\xi = -1$ or $\xi= 1$. We also remark that the GLL quadrature rule is exact for polynomial integrand of degree $2P-1$, GLR quadrature rule is exact for polynomial integrand of degree $2P$, and GL quadrature rule is exact for polynomial integrand of  degree $2P+1$.

 
 \subsection{The spectral difference approximation}
 Introduce the   matrices $ H, Q \in \mathbb{R}^{(P+1) \times (P+1)}$, defined by
{
\begin{equation}
 H = \mathrm{diag}[h_1, h_2, \cdots, h_{P+1}], \quad Q_{ij} = \sum_{m = 1}^{P+1} h_m \mathcal{L}_i(q_m)  {\mathcal{L}_j^{\prime}(q_m)} = \int_{-1}^{1}\mathcal{L}_i(q)  {\mathcal{L}_j^{\prime}(q)} dq.
\end{equation}
}
Note that the matrix
\begin{equation}\label{eq:derivative_operator_1d}
D = H^{-1} Q \approx \frac{\partial}{\partial q},
\end{equation}
is a one space dimensional spectral difference approximation of the first derivative, in the transformed coordinate $q$.

Using the fact that the quadrature rule is exact  for all polynomial  integrand of degree $\le 2P-1$ 
  implies that
\begin{equation}\label{eq:sbp_property_a}
Q + Q^T =  B,
\quad
B_{ij} = \mathcal{L}_i(1)  \mathcal{L}_j (1)- \mathcal{L}_i(-1)  \mathcal{L}_j (-1).
\end{equation}
Equations \eqref{eq:derivative_operator_1d} and \eqref{eq:sbp_property_a} are the discrete equivalence of the integration-by-parts property, and the matrix $B$ projects the nodal degrees of freedom to element faces.
If boundary points $q = -1,1$ are quadrature nodes and we consider nodal bases  with $ \mathcal{L}_j (q_i) = \delta_{ij}$ then we have $B = \text{diag}[-1, 0,0, \dots, 0, 1].$ 

The one space dimensional derivative operator \eqref{eq:derivative_operator_1d} can be extended to higher space dimensions using the Kronecker products $\otimes$, having
\begin{equation}\label{eq:derivative_operator_3d}
\mathbf{D}_x = \frac{2}{\Delta{x}}\left(I_{9} \otimes D\otimes I\otimes I\right), \quad \mathbf{D}_y = \frac{2}{\Delta{y}}\left(I_{9} \otimes  I\otimes D\otimes I\right), \quad \mathbf{D}_z = \frac{2}{\Delta{z}}\left(I_{9} \otimes  I\otimes I\otimes D\right),
\end{equation}
{\small
\begin{equation}\label{eq:norm_operator_3d}
\mathbf{H}_x = \frac{\Delta{x}}{2}\left(I_{9} \otimes  H\otimes I\otimes I\right), \quad \mathbf{H}_y = \frac{\Delta{y}}{2}\left(I_{9} \otimes  I\otimes H\otimes I\right), \quad \mathbf{H}_z = \frac{\Delta{z}}{2}\left(I_{9} \otimes  I\otimes I\otimes H\right), \quad \mathbf{H} = \mathbf{H}_x\mathbf{H}_y\mathbf{H}_z.
\end{equation}
}
\begin{equation}\label{eq:norm_operator_3d}
\mathbf{P} = \left({P} \otimes  I\otimes I\otimes I\right), \quad \mathbf{A}_{\xi} = \left({A}_{\xi} \otimes  I\otimes I\otimes I\right)
\end{equation}
Here, $I$ is the $(P+1)\times(P+1)$ identity matrix and $I_{9} $ the $9 \times 9$ identity matrix. Note that the matrix product $\mathbf{H} $ commutes, that is $\mathbf{H} = \mathbf{H}_x\mathbf{H}_y\mathbf{H}_z = \mathbf{H}_x\mathbf{H}_z\mathbf{H}_y  = \mathbf{H}_y\mathbf{H}_z\mathbf{H}_x$.
We also introduce the projection matrices
\begin{align*}
&\mathbf{e}_x(\eta) = \left(I_{9} \otimes  \boldsymbol{e}(\eta)\otimes I\otimes I\right), \quad \mathbf{e}_y(\eta) = \left(I_{9} \otimes  I\otimes \boldsymbol{e}(\eta)\otimes I\right), 
\\
&\mathbf{e}_z(\eta) = \left(I_{9} \otimes  I\otimes I\otimes \boldsymbol{e}(\eta)\right), \quad  \mathbf{B}_{\eta}(\psi, \xi) = \mathbf{e}_{\eta}(\psi) \mathbf{e}_{\eta}^T(\xi),
\end{align*}
where
\[
  \boldsymbol{e}(\eta) = [\mathcal{L}_i(\eta), \mathcal{L}_i(\eta), \cdots, \mathcal{L}_{P+1}(\eta)]^T.
\]
Using  \eqref{eq:sbp_property_a} and \eqref {eq:derivative_operator_1d} we can rewrite \eqref{eq:derivative_operator_3d} as
\begin{align}\label{eq:disc_sbp}
 \mathbf{D}_\xi = -\mathbf{H}_\xi^{-1}\mathbf{D}_\xi^T\mathbf{H}_\xi + \mathbf{H}_\xi^{-1}\left(\mathbf{B}_\xi\left(1,1\right)-\mathbf{B}_\xi\left(-1,-1\right)\right), \quad \xi = x, y, z.
\end{align}

 \subsection{The DG approximation of the PML}
 We now make a classical  Galerkin approximation by choosing test functions  in the same space as the basis functions, so that the residual is orthogonal to the space of test functions. By rearranging the elemental degrees of freedom $[\mathbf{Q}_{ijk}(t) ]$ row-wise as a vector, $\mathbf{Q}(t) $,  of length $9(P+1)^d$ where $d = 3$ is the number of space dimensions, we have the semi-discrete approximation 
{
\begin{equation}\label{eq:disc_elastic_pml_1A}
\begin{split}
\mathbf{P}^{-1}\frac{ d {\mathbf{Q}}}{d t} = \sum_{\xi = x, y, z}\left[\mathbf{A}_{\xi}\mathbf{D}_{\xi} \mathbf{Q}  - \mathbf{d}_{\xi}\mathbf{w}_{\xi} - \mathbf{H}_{\xi}^{-1}{\left({\mathbf{e}_{\xi}(-1)} \mathbf{FL}_\xi +  {\mathbf{e}_{\xi}(1)} \mathbf{FR}_\xi  \right)}\right],
  \end{split}
  \end{equation}
\begin{equation}\label{eq:disc_elastic_pml_3A}
\begin{split}
\frac{d {\mathbf{w}_{\xi}}}{ d t} 
 = \mathbf{A}_{\xi}\mathbf{D}_{\xi} \mathbf{Q}  - \left(\mathbf{d}_{\xi} + \boldsymbol{\alpha}_{\xi}\right)\mathbf{w}_{\xi} - \underbrace{\theta_{\xi}\mathbf{H}_{\xi}^{-1}{\left({\mathbf{e}_{\xi}(-1)} \mathbf{FL}_\xi +  {\mathbf{e}_{\xi}(1)} \mathbf{FR}_\xi  \right)}}_{\text{PML stabilizing flux fluctuations}}.
  \end{split}
  \end{equation}
}

Note the close similarity between the semi-discrete approximation \eqref{eq:disc_elastic_pml_1A}-\eqref{eq:disc_elastic_pml_3A} and the continuous analogue \eqref{eq:elastic_pml_1A}-\eqref{eq:elastic_pml_2A}. The discrete operator  $\left(\mathbf{D}_x, \mathbf{D}_y, \mathbf{D}_z\right)^T$ is also analogous to the continuous gradient operator $ \left(\partial/\partial x, \partial/\partial y, \partial/\partial z\right)^T$, where we have replaced the continuous derivative operators  with their discrete counterparts given in \eqref{eq:derivative_operator_3d}, 
\[
\frac{\partial}{\partial x} \to \mathbf{D}_x, \quad  \frac{\partial}{\partial y} \to \mathbf{D}_y, \quad \frac{\partial}{\partial z} \to \mathbf{D}_z.
\]
%
\section{Numerical stability}
In this section, we will prove numerical stability. 
As before, to simplify the analysis we will consider a DG approximation with  two elements only, separated at $x = 0$.  The analysis can be extended to a  DG approximation consisting of many elements.  
We will begin with the discrete undamped problem, that is when all PML absorption functions vanish $d_{\xi} = 0$.
We will proceed later to prove numerical stability of the semi-discrete approximation when the PML absorption functions are present $d_{\xi} \ge 0$.
\subsection{Stability analysis for the undamped discrete problem}
We will begin with the undamped case, $d_\xi \equiv 0$.  With $d_\xi\equiv 0$, then the auxiliary variables, $\mathbf{w}_\xi$ decouple completely from the modified discrete wave equation \eqref{eq:disc_elastic_pml_1A}. Note that this is equivalent to considering \eqref{eq:disc_elastic_pml_1A} by itself.
We approximate the integrals in \eqref{eq:physical_energy} by the corresponding quadrature rule \eqref{eq:quad_rule_3D}, having
{
\small
\begin{align}\label{eq:disc_elastic_energy}
\|\mathbf{Q}\|^2_{hP} := \frac{1}{2}{\mathbf{Q}} ^T {\mathbf{H}} {\mathbf{P}}^{-1}  {\mathbf{Q}} =  \frac{\Delta{x}}{2}\frac{\Delta{y}}{2}\frac{\Delta{z}}{2}\sum_{i=1}^{P+1}\sum_{j=1}^{P+1}\sum_{k=1}^{P+1} \frac{1}{2}{\mathbf{Q}}_{ijk} ^T  {\mathbf{P}}_{ijk}^{-1}   {\mathbf{Q}}_{ijk} h_ih_jh_k > 0.
\end{align}
}
We also approximate the surface integrals in the boundary and interface terms.

For the two elements model we introduce the external boundary terms 
 \begin{align*}
 \mathcal{BT}_s\left(\widehat{v}_{\eta}^{-}, \widehat{T}_{\eta}^{-}\right)  = &  \sum_{\xi = r, s}\sum_{i = 1}^{P+1}\sum_{k = 1}^{P+1}\sum_{\eta = x,y,z}\left(\left(\sqrt{\xi_x^2 + \xi_y^2 + \xi_z^2}J \widehat{T}_\eta^{-} \widehat{v}_\eta^{-} \right)\Big|_{\xi = 1} - \left(\sqrt{\xi_x^2 + \xi_y^2 + \xi_z^2}J \widehat{T}_\eta^{-} \widehat{v}_\eta^{-}  \right)\Big|_{\xi = -1}\right)_{i k} h_{i} h_{k} \\
&- \sum_{i = 1}^{P+1}\sum_{k = 1}^{P+1}\sum_{\eta = x,y,z}\left(\left(\sqrt{q_x^2 + q_y^2 + q_z^2}J \widehat{T}_\eta^{-} \widehat{v}_\eta^{-}  \right)\Big|_{q = -1}\right)_{i k} h_{i} h_{k} \le 0,
 \end{align*}
 \begin{align*}
\mathcal{BT}_s\left(\widehat{v}_{\eta}^{+}, \widehat{T}_{\eta}^{+}\right)  = &  \sum_{\xi = r, s}\sum_{i = 1}^{P+1}\sum_{k = 1}^{P+1}\sum_{\eta = x,y,z}\left(\left(\sqrt{\xi_x^2 + \xi_y^2 + \xi_z^2}J \widehat{T}_\eta^{+} \widehat{v}_\eta^{+} \right)\Big|_{\xi = 1} - \left(\sqrt{\xi_x^2 + \xi_y^2 + \xi_z^2}J \widehat{T}_\eta^{+} \widehat{v}_\eta^{+}  \right)\Big|_{\xi = -1}\right)_{i k} h_{i} h_{k}  \\
&+\sum_{i = 1}^{P+1}\sum_{k = 1}^{P+1}\sum_{\eta = x,y,z}\left(\left(\sqrt{q_x^2 + q_y^2 + q_z^2}J \widehat{T}_\eta^{+} \widehat{v}_\eta^{+}  \right)\Big|_{q = 1}\right)_{i k} h_{i} h_{k}  \le 0,
 \end{align*}
  the interface term
  \begin{align*}
  \mathcal{IT}_s\left(\widehat{v}^{\pm}, \widehat{T}^{\pm} \right) = - \sum_{i = 1}^{P+1}\sum_{k = 1}^{P+1}\sum_{\eta = x,y,z}\left(\sqrt{q_x^2 + q_y^2 + q_z^2}{\widehat{T}}_\eta  \lJump{{\widehat{v}}_\eta \rJump} J\right)_{i k} h_{i} h_{k} \equiv 0,
\end{align*}
   and the fluctuation term
    \begin{align*}
  \mathcal{F}_{luc}\left({G},  Z\right) = - \sum_{\xi = q, r, s}\sum_{\eta = x, y, z}\sum_{i = 1}^{P+1}\sum_{k = 1}^{P+1}\left(\sqrt{\xi_x^2 + \xi_y^2 + \xi_z^2}\left(\left(\left(\frac{1}{Z_{\eta}}|{G}_\eta |^2\right)\Big|_{\xi = -1} + \left(\frac{1}{Z_{\eta}}|{G}_\eta |^2\right)\Big|_{\xi = 1}  \right)\right)J\right)_{i, k} \le 0.
  \end{align*}

We have 
 \begin{theorem}\label{theo:discrete_case_no_damping}
 Consider the  semi-discrete  approximation   \eqref{eq:disc_elastic_pml_1A}-\eqref{eq:disc_elastic_pml_3A}. When all PML absorption functions vanish,  $d_{\xi} = 0$, the solution of the semi-discrete approximation satisfies the energy identity
 {\small
 \begin{equation}\label{eq:discrete_energy_estimate_elastic_3D}
\frac{d}{dt} \left(\|\mathbf{Q}^{-}\|^2_{hP} + \|\mathbf{Q}^{+}\|^2_{hP} \right)=   \mathcal{IT}_s\left(\widehat{v}^{\pm},  \widehat{{T}}^{\pm}\right)  +  \mathcal{BT}_s\left(\widehat{v}^{-}, \widehat{T}^{-}\right) + BT_s\left(\widehat{v}^{+}, \widehat{T}^{+}\right) +   \mathcal{F}_{luc}\left({G} ^{-},  Z^{-}\right) +   \mathcal{F}_{luc}\left({G} ^{+},  Z^{+}\right) \le 0.
\end{equation}
}
 \end{theorem}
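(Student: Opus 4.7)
The plan is to mirror the continuous energy argument of Theorem \ref{theo:weak_form_statbility} exactly, replacing integration-by-parts by the discrete summation-by-parts identity \eqref{eq:disc_sbp}. I would start by differentiating the discrete energy and using symmetry of $\mathbf{H}$ and $\mathbf{P}^{-1}$ to write
\begin{equation*}
\frac{d}{dt}\|\mathbf{Q}\|_{hP}^2 = \mathbf{Q}^T \mathbf{H}\, \mathbf{P}^{-1} \frac{d\mathbf{Q}}{dt}.
\end{equation*}
Substituting \eqref{eq:disc_elastic_pml_1A} with $d_\xi=0$, and noting that the 9-by-9 block matrices $\mathbf{A}_\xi$, $\mathbf{P}$ commute with the spatial norm/projection matrices (because of the tensor-product structure), the right-hand side splits into a volume part $\sum_{\xi}\mathbf{Q}^T \mathbf{H}\,\mathbf{A}_\xi \mathbf{D}_\xi \mathbf{Q}$ and a flux-penalty part involving $\mathbf{FL}_\xi$, $\mathbf{FR}_\xi$.

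For the volume part I would apply the SBP relation \eqref{eq:disc_sbp} direction-by-direction, pulling the $\mathbf{H}_\xi$ from $\mathbf{H}=\mathbf{H}_x\mathbf{H}_y\mathbf{H}_z$ through $\mathbf{D}_\xi$. Symmetrising by adding the transpose (which is valid since it equals the original scalar) and using $\mathbf{A}_\xi=\mathbf{A}_\xi^T$, the transposed volume contribution cancels the original, leaving only the boundary projection
\begin{equation*}
\tfrac{1}{2}\sum_{\xi}\mathbf{Q}^T \mathbf{H}_{\xi^\perp}\bigl(\mathbf{B}_\xi(1,1)-\mathbf{B}_\xi(-1,-1)\bigr)\mathbf{A}_\xi \mathbf{Q},
\end{equation*}
where $\mathbf{H}_{\xi^\perp}$ is the product of norms in the two transverse directions. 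This is the discrete analogue of the surface integral of $\tfrac{1}{2}\mathbf{Q}^T\mathbf{A}_\xi\mathbf{Q}=\mathbf{v}^T\mathbf{T}$ in the continuous proof, and the weights $\mathbf{H}_{\xi^\perp}$ realise the face quadrature appearing in $\mathcal{BT}_s$, $\mathcal{IT}_s$, $\mathcal{F}_{luc}$.

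Next I would combine these surface terms with the penalty contributions $-\mathbf{Q}^T\mathbf{H}_{\xi^\perp}\bigl(\mathbf{e}_\xi(-1)\mathbf{FL}_\xi+\mathbf{e}_\xi(1)\mathbf{FR}_\xi\bigr)$ face by face, and invoke the algebraic identity \eqref{eq:identity_pen}. At $\xi=1$ this rewrites the combined face contribution as $-\sum_\eta\bigl(\tfrac{1}{Z_\eta}|G_\eta|^2-\widehat{T}_\eta\widehat{v}_\eta\bigr)$ and at $\xi=-1$ as $-\sum_\eta\bigl(\tfrac{1}{Z_\eta}|G_\eta|^2+\widehat{T}_\eta\widehat{v}_\eta\bigr)$, cleanly splitting each face into a manifestly non-positive quadratic-in-$G_\eta$ piece (collected into $\mathcal{F}_{luc}$) and a hat-variable piece (collected into $\mathcal{BT}_s$ or $\mathcal{IT}_s$). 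Summing over the two elements: on the five external faces of each element the hat-variable piece assembles into $\mathcal{BT}_s(\widehat{v}^\pm,\widehat{T}^\pm)$, which is $\le 0$ by the boundary identity \eqref{eq:identity_bc}; on the shared interface $x=0$ the contributions from $\pm$ sides assemble into $\mathcal{IT}_s(\widehat{v}^\pm,\widehat{T}^\pm)$, which vanishes by the interface identity \eqref{eq:identity_interface}; and the fluctuation pieces give $\mathcal{F}_{luc}({G}^\pm,Z^\pm)\le 0$.

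The main obstacle is bookkeeping: correctly tracking how the tensor-product structure in \eqref{eq:derivative_operator_3d}--\eqref{eq:norm_operator_3d} turns the discrete boundary projection $\mathbf{B}_\xi(\pm 1,\pm 1)$, weighted by the two transverse norm matrices, into precisely the face quadrature sums $\sum_{i,k}(\cdot)h_i h_k$ defining $\mathcal{BT}_s$, $\mathcal{IT}_s$ and $\mathcal{F}_{luc}$, including the surface Jacobians $\sqrt{\xi_x^2+\xi_y^2+\xi_z^2}\,J$. Once this bookkeeping and the application of \eqref{eq:identity_pen}, \eqref{eq:identity_bc}, \eqref{eq:identity_interface} are carried out, the identity \eqref{eq:discrete_energy_estimate_elastic_3D} follows directly.
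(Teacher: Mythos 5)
Your plan is correct and takes essentially the same approach as the paper: the paper defers this proof to \cite{DuruGabrielIgel2017,Duru_exhype_2_2019}, but the argument used there and in the paper's own appendix proofs of the discrete PML theorems is exactly what you describe — differentiate the discrete energy, apply the SBP relation \eqref{eq:disc_sbp} to reduce the volume terms to face projections, then combine these with the penalty terms via \eqref{eq:identity_pen} and conclude with the hat-variable identities \eqref{eq:identity_bc} and \eqref{eq:identity_interface}. No gaps.
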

 The proof of Theorem \ref{theo:discrete_case_no_damping} can be adapted from \cite{DuruGabrielIgel2017,Duru_exhype_2_2019}. We will not repeat it here.
 
 By theorem \ref{theo:discrete_case_no_damping} above, in the absence of the PML, $d_\xi = 0$, the semi-discrete approximation  \eqref{eq:disc_elastic_pml_1A}-\eqref{eq:disc_elastic_pml_3A} is asymptotically stable. However, this energy estimate \eqref{eq:discrete_energy_estimate_elastic_3D} is not valid when the PML is active, that is for  $d_\xi \ge 0$ and any $\xi = x, y, z$.
In order to demonstrate numerical stability when the PML is present $d_\xi \ne 0$, we will simplify further and prove the discrete versions of Theorems \ref{theo:PML_1D_Strip_Laplace}, \ref{theo:PML_2D_Edge_Laplace}, \ref{theo:PML_3D_Corner_Laplace} and \ref{Theo:estimate_PML}. If the numerical approximation is not provably stable for these model problems, there is no chance that it will  be stable for the general case.
 \subsection{Stability analysis for the discrete PML  problem}
As in the continuous case we are unable to derive  discrete energy estimates for the time dependent discrete PML problem \eqref{eq:disc_elastic_pml_1A}-\eqref{eq:disc_elastic_pml_3A}, when the damping is present $d_\xi \ge 0$. When damping is present $d_\xi \ge 0$, we will instead take the Laplace transform in time and derive  discrete energy estimates, analogous to \eqref{eq:energy_estimate_pml_laplace_strip}, \eqref{eq:energy_estimate_pml_laplace_edge}, \eqref{eq:energy_estimate_pml_laplace_corner},  and \eqref{eq:estimate_0_0}.  In particular, we will demonstrate the significant role played by the PML stabilizing parameter $\theta_{\xi}$ when the PML is present.

Taking the Laplace transform, in time, of the semi-discrete  problem  \eqref{eq:disc_elastic_pml_1A}-\eqref{eq:disc_elastic_pml_3A}  gives
{
\begin{equation}\label{eq:disc_elastic_pml_1A_Laplace}
\begin{split}
\mathbf{P}^{-1} s{  \widetilde{\mathbf{Q}}} = \sum_{\xi = x, y, z}\left[\mathbf{A}_{\xi}\mathbf{D}_{\xi} \widetilde{\mathbf{Q}}  - \mathbf{d}_{\xi}\mathbf{w}_{\xi} - \mathbf{H}_{\xi}^{-1}{\left({\mathbf{e}_{\xi}(-1)} \widetilde{\mathbf{FL}}_{\xi}+  {\mathbf{e}_{\xi}(1)} \widetilde{\mathbf{FR}}_{\xi} \right)}\right] + \widetilde{\mathbf{F}}_{Q} ,
  \end{split}
  \end{equation}
\begin{equation}\label{eq:disc_elastic_pml_3A_Laplace}
\begin{split}
 s\widetilde{\mathbf{w}}_{\xi}
 = \mathbf{A}_{\xi}\mathbf{D}_{\xi} \widetilde{\mathbf{Q}}  - \left(\mathbf{d}_{\xi} + \boldsymbol{\alpha}_{\xi}\right)\widetilde{\mathbf{w}}_{\xi}- \underbrace{\theta_{\xi}\mathbf{H}_{\xi}^{-1}{\left({\mathbf{e}_{\xi}(-1)} \widetilde{\mathbf{FL}}_{\xi}+  {\mathbf{e}_{\xi}(1)} \widetilde{\mathbf{FR}}_{\xi}  \right)}}_{\text{PML stabilizing flux fluctuations}} +   \widetilde{\mathbf{F}}_{w_\xi}.
  \end{split}
  \end{equation}
}

Next, we use \eqref{eq:disc_elastic_pml_3A_Laplace} and  eliminate the auxiliary variable $\widetilde{\mathbf{w}}_{\xi}$ from  \eqref{eq:disc_elastic_pml_1A_Laplace}. We have
{
\begin{equation}\label{eq:Laplace_disc_elastic_pml_1A}
\begin{split}
\mathbf{P}^{-1} \left(sS_x\right) \widetilde{\mathbf{Q}} = & \sum_{\xi = x, y, z} \frac{S_x}{S_\xi}\left[\frac{1}{2}\left(\mathbf{A}_{\xi}\mathbf{D}_{\xi}  - \mathbf{H}_{\xi}^{-1}\mathbf{A}_{\xi}\mathbf{D}_{\xi}^{T}\mathbf{H}_{\xi}    + \mathbf{H}_{\xi}^{-1}\mathbf{A}_{\xi}\left(\mathbf{B}_{\xi}\left(1,1\right)\right) -  \mathbf{B}_{\xi}\left(-1,-1\right)\right)\widetilde{\mathbf{Q}} \right]\\
- &\sum_{\xi = x, y, z} \frac{S_x}{S_\xi}\left[\mathbf{H}_{\xi}^{-1}{\left({\mathbf{e}_{\xi}(-1)} \widetilde{\mathbf{FL}}_{\xi} +  {\mathbf{e}_{\xi}(1)} \widetilde{\mathbf{FR}}_{\xi}  \right)} \right] + \mathbf{P}^{-1} \widetilde{\mathbf{F}} \\
+& \underbrace{\sum_{\xi = x, y, z} \frac{\mathbf{d}_\xi S_x \left(1-\theta_{\xi}\right)}{S_\xi \left(s + \alpha_{\xi}\right)}\mathbf{H}_{\xi}^{-1}{\left({\mathbf{e}_{\xi}(-1)} \widetilde{\mathbf{FL}}_{\xi} +  {\mathbf{e}_{\xi}(1)} \widetilde{\mathbf{FR}}_{\xi}  \right)}}_{\text{destabilizing PML  flux term}}. 
  \end{split}
  \end{equation}
}
Note that in \eqref{eq:Laplace_disc_elastic_pml_1A}, we  have made use of  the discrete integration property \eqref{eq:disc_sbp} of the spatial derivative operator $\mathbf{D}_{\xi}$.
The last term in the right hand side of \eqref{eq:Laplace_disc_elastic_pml_1A} is a destabilizing PML  flux term at element faces. The DG  is an element based method, and  these destabilizing flux terms appear almost everywhere in the PML, including at internal and external faces. These destabilizing terms can be eliminated using the parameter $\theta_\xi $.
 We set the penalty parameter $\theta_\xi =1$, extending the numerical implementation of the boundary conditions and inter-element conditions to the auxiliary differential equations. 
 The last  term in  the right hand side of \eqref{eq:Laplace_disc_elastic_pml_1A} vanishes, having
 {
 \begin{equation}\label{eq:Laplace_disc_elastic_pml_1A_0}
\begin{split}
\mathbf{P}^{-1} \left(sS_x\right) \widetilde{\mathbf{Q}} = & \sum_{\xi = x, y, z} \frac{S_x}{S_\xi}\left[\frac{1}{2}\left(\mathbf{A}_{\xi}\mathbf{D}_{\xi}  - \mathbf{H}_{\xi}^{-1}\mathbf{A}_{\xi}\mathbf{D}_{\xi}^{T}\mathbf{H}_{\xi}    + \mathbf{H}_{\xi}^{-1}\mathbf{A}_{\xi}\left(\mathbf{B}_{\xi}\left(1,1\right)\right) -  \mathbf{B}_{\xi}\left(-1,-1\right)\right)\widetilde{\mathbf{Q}} \right]\\
- &\sum_{\xi = x, y, z} \frac{S_x}{S_\xi}\left[\mathbf{H}_{\xi}^{-1}{\left({\mathbf{e}_{\xi}(-1)} \widetilde{\mathbf{FL}}_{\xi} +  {\mathbf{e}_{\xi}(1)} \widetilde{\mathbf{FR}}_{\xi}  \right)} \right] + \mathbf{P}^{-1} \widetilde{\mathbf{F}}.
  \end{split}
  \end{equation}
  }
In \eqref{eq:Laplace_disc_elastic_pml_1A_0}, note the uniform treatments of all element faces,  including internal and external element faces.

We approximate the scalar product \eqref{eq:elastic_scalarproduct_laplacePML} and the physical energy \eqref{eq:elastic_energy_laplacePML} by the quadrature rule \eqref{eq:quad_rule_3D}, having
\begin{align}\label{eq:disc_scalarproduct_energy_pml}
\Big\langle{\widetilde{\mathbf{Q}}, \widetilde{\mathbf{F}}}\Big\rangle_{h{P}} =  \frac{1}{2}\widetilde{\mathbf{Q}} ^{\dagger} {\mathbf{H}} {\mathbf{P}}^{-1}  \widetilde{\mathbf{F}} =  \frac{\Delta{x}}{2}\frac{\Delta{y}}{2}\frac{\Delta{z}}{2}\sum_{i=1}^{P+1}\sum_{j=1}^{P+1}\sum_{k=1}^{P+1} \frac{1}{2} \widetilde{\mathbf{Q}}_{ijk} ^{\dagger}  {\mathbf{P}}_{ijk}^{-1}    \widetilde{\mathbf{F}}_{ijk} h_ih_jh_k,
 \end{align}
 \begin{align}\label{eq:disc_elastic_energy_pml}
\|\widetilde{\mathbf{Q}}\left(\cdot, \cdot, \cdot, s\right)\|^2_{hP} = \Big\langle{\widetilde{\mathbf{Q}}, \widetilde{\mathbf{Q}}}\Big\rangle_{h{P}} =  \frac{\Delta{x}}{2}\frac{\Delta{y}}{2}\frac{\Delta{z}}{2}\sum_{i=1}^{P+1}\sum_{j=1}^{P+1}\sum_{k=1}^{P+1} \frac{1}{2} \widetilde{\mathbf{Q}}_{ijk} ^{\dagger}  {\mathbf{P}}_{ijk}^{-1}    \widetilde{\mathbf{Q}}_{ijk} h_ih_jh_k> 0. 
 \end{align}
As in the continuous setting,  we will consider first the 1D  PML strip problem, and proceed later to the edge and the corner regions.
 \subsubsection{The discrete PML strip problem }
 In \eqref{eq:Laplace_disc_elastic_pml_1A_0}, let  $d_x = d\ge 0$, $d_\xi \equiv 0$, $\mathbf{D}_\xi = \widetilde{\mathbf{FL}}_{\xi} = \widetilde{\mathbf{FR}}_{\xi}= 0$  for $\xi = y, z$. 
 We arrive at
 {
 \begin{equation}\label{eq:Laplace_disc_elastic_pml_1A_1D}
\begin{split}
\mathbf{P}^{-1} \left(sS_x\right) \widetilde{\mathbf{Q}} = &  \left[\frac{1}{2}\left(\mathbf{A}_{x}\mathbf{D}_{x}  - \mathbf{H}_{x}^{-1}\mathbf{A}_{x}\mathbf{D}_{x}^{T}\mathbf{H}_{x}    + \mathbf{H}_{x}^{-1}\mathbf{A}_{x}\left(\mathbf{B}_{\xi}\left(1,1\right)\right) -  \mathbf{B}_{x}\left(-1,-1\right)\right)\widetilde{\mathbf{Q}} \right]\\
- &\left[\mathbf{H}_{x}^{-1}{\left({\mathbf{e}_{x}(-1)} \widetilde{\mathbf{FL}}_{x} +  {\mathbf{e}_{\xi}(1)} \widetilde{\mathbf{FR}}_{x}  \right)} \right] + \mathbf{P}^{-1} \widetilde{\mathbf{F}}.
  \end{split}
  \end{equation}
  }
For the two elements model we introduce the external boundary terms 
 \begin{align*}
 \mathcal{BT}_s\left(\widehat{\widetilde{v}}_{\eta}^{-}, \widehat{\widetilde{T}}_{\eta}^{-}\right)  =  \frac{\Delta{y}}{2}  \frac{\Delta{z}}{2} \sum_{i = 1}^{P+1}\sum_{k = 1}^{P+1}\sum_{\eta = x,y,z}\left(\left(  \widehat{\widetilde{T}}_\eta^{-*} \widehat{\widetilde{v}}_\eta^{-}  \right)\Big|_{-1}\right)_{i k} h_{i} h_{k} \le 0,
 \end{align*}
 \begin{align*}
\mathcal{BT}_s\left(\widehat{\widetilde{v}}_{\eta}^{+}, \widehat{\widetilde{T}}_{\eta}^{+}\right)  = \frac{\Delta{y}}{2}  \frac{\Delta{z}}{2} \sum_{i = 1}^{P+1}\sum_{k = 1}^{P+1}\sum_{\eta = x,y,z}\left(\left( \widehat{\widetilde{T}}_\eta^{+*} \widehat{\widetilde{v}}_\eta^{+}  \right)\Big|_{ 1}\right)_{i k} h_{i} h_{k}  \le 0,
 \end{align*}
  the interface term
  \begin{align*}
  \mathcal{IT}_s\left(\widehat{\widetilde{v}}^{\pm}, \widehat{\widetilde{T}}^{\pm} \right) = - \frac{\Delta{y}}{2}  \frac{\Delta{z}}{2} \sum_{i = 1}^{P+1}\sum_{k = 1}^{P+1}\sum_{\eta = x,y,z}\left( {\widehat{\widetilde{T}}}_\eta^*  \lJump{{\widehat{\widetilde{v}}}_\eta \rJump} \right)_{i k} h_{i} h_{k} \equiv 0,
\end{align*}
   and the fluctuation term
    \begin{align*}
  \mathcal{F}_{luc}\left({\widetilde{G}},  Z\right) = - \frac{\Delta{y}}{2}  \frac{\Delta{z}}{2} \sum_{\eta = x, y, z}\sum_{i = 1}^{P+1}\sum_{k = 1}^{P+1}\left(\left(\frac{1}{Z_{\eta}}|{\widetilde{G}}_\eta |^2\right)\Big|_{-1} + \left(\frac{1}{Z_{\eta}}|{\widetilde{G}}_\eta |^2\right)\Big|_{ 1}  \right)_{i, k} h_{i} h_{k}  \le 0.
  \end{align*}
Here $v^*$ denotes the complex conjugate of $v$, and the surface integrals have been approximated by quadrature rules.
For the surface terms, we will also use the identity
{
\small
\begin{align}\label{eq:identity_001}
&\widetilde{\mathbf{Q}}^\dagger\mathbf{H}\mathbf{H}_{\xi}^{-1}\left(\frac{1}{2}\mathbf{A}_{\xi}\left( \mathbf{B}_{\xi}\left(1,1\right) -  \mathbf{B}_{\xi}\left(-1,-1\right)\right)\widetilde{\mathbf{Q}} -\left({\mathbf{e}_{\xi}(-1)} \widetilde{\mathbf{FL}}_{\xi} +  {\mathbf{e}_{\xi}(1)} \widetilde{\mathbf{FR}}_{\xi}  \right) \right) \\
 &=- \frac{\Delta{x}}{2} \frac{\Delta{y}}{2}  \frac{\Delta{z}}{2} \frac{2}{\Delta{\xi}} \sum_{i=1}^{P+1}\sum_{k= 1}^{P+1} \left(\sum_{\eta = x,y,z} \left(\frac{1}{Z_\eta  }|\widetilde{G}_\eta |^2  - \widehat{\widetilde{T}}_\eta^* \widehat{\widetilde{v}}_\eta \right)\Big|_{ 1} +   \sum_{\eta = x,y,z} \left(\frac{1}{Z_\eta  }|\widetilde{G}_\eta |^2  + \widehat{\widetilde{T}}_\eta^* \widehat{\widetilde{v}}_\eta \right)\Big|_{-1} \right)_{ik} h_ih_k. \nonumber
  \end{align}
  }

We can prove the discrete equivalence of Theorem \ref{theo:PML_1D_Strip_Laplace}.
 \begin{theorem}
 Consider the  semi-discrete PML equation  in the Laplace space \eqref{eq:Laplace_disc_elastic_pml_1A_1D}, with $d_x = d\ge 0$, $\alpha_x = \alpha \ge 0$, and $\Re(s) \ge a> 0$.  We have 
 \begin{equation}
 \begin{split}
& \|\sqrt{\Re(s S_x)}\widetilde{\mathbf{Q}}^{-}\left(s\right)\|^2_{hP} + \|\sqrt{\Re(s S_x)}\widetilde{\mathbf{Q}}^{+}\left(s\right)\|^2_{hP}  \le  \|\widetilde{\mathbf{Q}}^{-}\left(s\right)\|_{hP} \|\widetilde{\mathbf{F}}^{-}\left( s\right)\|_{hP}  + \|\widetilde{\mathbf{Q}}^{+}\left(s\right)\|_{hP} \|\widetilde{\mathbf{F}}^{+}\left(s\right)\|_{hP}+  \widetilde{\mathrm{BT}}_h, \\
& \widetilde{\mathrm{BT}}_h =  \mathcal{BT}_s\left(\widehat{\widetilde{v}}_{\eta}^{-}, \widehat{\widetilde{T}}_{\eta}^{-}\right) + \mathcal{BT}_s\left(\widehat{\widetilde{v}}_{\eta}^{+}, \widehat{\widetilde{T}}_{\eta}^{+}\right)  +  \mathcal{IT}_s\left(\widehat{\widetilde{v}}^{\pm}, \widehat{\widetilde{T}}^{\pm} \right) +  \mathcal{F}_{luc}\left({\widetilde{G}}^{-},  Z^{-}\right)  + \mathcal{F}_{luc}\left({\widetilde{G}}^{+},  Z^{+}\right) \le 0,
\\
& \Re(s S_x) \ge a > 0,
\end{split}
\end{equation}
where  the surface terms $\widetilde{\mathrm{BT}}_h$ are negative semi-definite.
 \end{theorem}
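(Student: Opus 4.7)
The plan is to mimic the proof of Theorem~\ref{theo:PML_1D_Strip_Laplace} in the discrete setting, exploiting the key observation that setting $\theta_x=1$ in \eqref{eq:disc_elastic_pml_3A_Laplace} has already eliminated the destabilizing PML flux terms from \eqref{eq:Laplace_disc_elastic_pml_1A_1D}. First, I would write \eqref{eq:Laplace_disc_elastic_pml_1A_1D} on each of the two elements, labelled $\pm$, multiply from the left by $(\widetilde{\mathbf{Q}}^{\pm})^{\dagger}\mathbf{H}$, and add the resulting scalar identity to its complex conjugate. Since $d_x$ is piecewise constant on each element, the scalar $sS_x$ factors out of the inner product on the left, so $sS_x+\overline{sS_x}=2\Re(sS_x)$ pulls out and yields $2\|\sqrt{\Re(sS_x)}\widetilde{\mathbf{Q}}^{\pm}\|_{hP}^{2}$ after accounting for the factor $1/2$ in \eqref{eq:disc_scalarproduct_energy_pml}.

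Second, the discrete volume operator $\tfrac{1}{2}(\mathbf{A}_x\mathbf{D}_x-\mathbf{H}_x^{-1}\mathbf{A}_x\mathbf{D}_x^{T}\mathbf{H}_x)$ is skew-Hermitian with respect to the weight $\mathbf{H}$. Indeed, since $\mathbf{A}_x$ and $\mathbf{H}_x$ commute (they act on disjoint tensor factors) and are both symmetric, the matrix $\mathbf{H}(\mathbf{A}_x\mathbf{D}_x-\mathbf{H}_x^{-1}\mathbf{A}_x\mathbf{D}_x^{T}\mathbf{H}_x)=\mathbf{H}_y\mathbf{H}_z\bigl(\mathbf{H}_x\mathbf{A}_x\mathbf{D}_x-(\mathbf{H}_x\mathbf{A}_x\mathbf{D}_x)^{T}\bigr)$ is antisymmetric, so its contribution vanishes when the quadratic form is added to its conjugate transpose. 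The remaining surface contributions, coming from the boundary projectors $\tfrac{1}{2}\mathbf{H}_x^{-1}\mathbf{A}_x(\mathbf{B}_x(1,1)-\mathbf{B}_x(-1,-1))$ together with the flux fluctuations $\widetilde{\mathbf{FL}}_x$ and $\widetilde{\mathbf{FR}}_x$, collapse by identity \eqref{eq:identity_001} into the node-wise surface form involving $|\widetilde{G}_\eta|^{2}/Z_\eta$ and the bilinear pairings $\widehat{\widetilde{T}}_\eta^{*}\widehat{\widetilde{v}}_\eta$ evaluated at $x=\pm 1$ on each element.

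Third, I would sum the two element identities. At the internal interface $x=0$, the $x=1$ face of the minus element and the $x=-1$ face of the plus element produce hat-variable pairings which, by continuity of the traction $\widehat{\widetilde{\mathbf{T}}}^{-}=\widehat{\widetilde{\mathbf{T}}}^{+}=\widehat{\widetilde{\mathbf{T}}}$ and the opposite orientations encoded in the signs of \eqref{eq:identity_001}, combine into the interface term $\mathcal{IT}_s(\widehat{\widetilde{v}}^{\pm},\widehat{\widetilde{T}}^{\pm})$, which vanishes identically by the discrete analogue of \eqref{eq:simplify_4_Laplace}. The hat-variable contributions at the exterior faces assemble into $\mathcal{BT}_s(\widehat{\widetilde{v}}^{\pm},\widehat{\widetilde{T}}^{\pm})$, which is non-positive by \eqref{eq:simplify_3_Laplace} whenever $|\gamma_\eta|\le 1$, while the squared jump terms assemble into $\mathcal{F}_{luc}({\widetilde{G}}^{\pm},Z^{\pm})\le 0$ by inspection; hence $\widetilde{\mathrm{BT}}_h\le 0$. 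The source terms $\mathbf{P}^{-1}\widetilde{\mathbf{F}}^{\pm}$ on the right are then controlled by the discrete Cauchy--Schwarz inequality in \eqref{eq:disc_scalarproduct_energy_pml}, producing $\|\widetilde{\mathbf{Q}}^{\pm}\|_{hP}\|\widetilde{\mathbf{F}}^{\pm}\|_{hP}$; dividing the assembled identity by $2$ yields the stated estimate.

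The main obstacle will be the orientation bookkeeping at the internal interface. One must verify that the sign conventions embedded in $\mathbf{B}_x(1,1)-\mathbf{B}_x(-1,-1)$ combine correctly across the two elements so that the $\widehat{\widetilde{T}}_\eta^{*}\widehat{\widetilde{v}}_\eta$ contributions from the minus-side $x=1$ face and the plus-side $x=-1$ face reconstruct exactly $\widehat{\widetilde{\mathbf{T}}}^{*}\lJump\widehat{\widetilde{\mathbf{v}}}\rJump$ rather than a spurious average. This is precisely the reason the physics-based hat-variables are constructed to satisfy \eqref{eq:elastic_elastic_interface_Laplace} exactly, so that the interface term cancels regardless of the numerical solution, and it is the same reason that no analogous cancellation would occur if $\theta_x\ne 1$ were retained in \eqref{eq:Laplace_disc_elastic_pml_1A}.
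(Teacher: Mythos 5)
Your proposal is correct and follows essentially the same route as the paper's proof: multiply \eqref{eq:Laplace_disc_elastic_pml_1A_1D} by $\widetilde{\mathbf{Q}}^{\dagger}\mathbf{H}$, exploit the skew-symmetry of $\mathbf{H}\bigl(\mathbf{A}_{x}\mathbf{D}_{x}-\mathbf{H}_{x}^{-1}\mathbf{A}_{x}\mathbf{D}_{x}^{T}\mathbf{H}_{x}\bigr)$ so the volume terms cancel upon adding the complex conjugate, collapse the surface contributions via the identity \eqref{eq:identity_001} into the $|\widetilde{G}_\eta|^{2}/Z_\eta$ and $\widehat{\widetilde{T}}_\eta^{*}\widehat{\widetilde{v}}_\eta$ pairings, assemble the two-element boundary, interface and fluctuation terms, and finish with the discrete Cauchy--Schwarz inequality for the source. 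The only differences from the paper are cosmetic (order of conjugation versus applying \eqref{eq:identity_001}, and your more explicit justification of the skew-Hermitian structure and the interface sign bookkeeping), not a different argument.
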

\begin{proof}
From the left multiply \eqref{eq:Laplace_disc_elastic_pml_1A_1D} with $\widetilde{\mathbf{Q}}^\dagger {\mathbf{H}}$, we have
  {
 \small
 \begin{equation}\label{eq:Laplace_disc_elastic_pml_1A_1D_proof_1}
\begin{split}
\left(sS_x\right)\widetilde{\mathbf{Q}}^\dagger {\mathbf{H}}\mathbf{P}^{-1}  \widetilde{\mathbf{Q}} = &  \frac{1}{2}\widetilde{\mathbf{Q}}^\dagger \left(\mathbf{A}_{x}\left(\mathbf{H}\mathbf{D}_{x}\right) - \left(\mathbf{H}\mathbf{D}_{x}\right)^T\mathbf{A}_{x} \right)\widetilde{\mathbf{Q}}  + \widetilde{\mathbf{Q}}^\dagger {\mathbf{H}}\mathbf{P}^{-1}  \widetilde{\mathbf{F}} \\
+ & \widetilde{\mathbf{Q}}^\dagger\mathbf{H}\mathbf{H}_{x}^{-1}\left(\frac{1}{2}\mathbf{A}_{x}\left( \mathbf{B}_{x}\left(1,1\right) -  \mathbf{B}_{x}\left(-1,-1\right)\right)\widetilde{\mathbf{Q}} -\left({\mathbf{e}_{x}(-1)} \widetilde{\mathbf{FL}}_{x} +  {\mathbf{e}_{x}(1)} \widetilde{\mathbf{FR}}_{x}  \right) \right)
%
  \end{split}
  \end{equation}
  }
Using the identity \eqref{eq:identity_001} gives 
  {
 \small
 \begin{equation}\label{eq:Laplace_disc_elastic_pml_1A_1D_proof_2}
\begin{split}
&\left(sS_x\right)\widetilde{\mathbf{Q}}^\dagger {\mathbf{H}}\mathbf{P}^{-1}  \widetilde{\mathbf{Q}} =   \frac{1}{2}\widetilde{\mathbf{Q}}^\dagger \left(\mathbf{A}_{x}\left(\mathbf{H}\mathbf{D}_{x}\right) - \left(\mathbf{H}\mathbf{D}_{x}\right)^T\mathbf{A}_{x} \right)\widetilde{\mathbf{Q}} +  \widetilde{\mathbf{Q}}^\dagger {\mathbf{H}}\mathbf{P}^{-1}  \widetilde{\mathbf{F}}  \\
- &\frac{\Delta{y}}{2}  \frac{\Delta{z}}{2} \sum_{i=1}^{P+1}\sum_{k= 1}^{P+1}\  \left(\sum_{\eta = x,y,z} \left(\frac{1}{Z_\eta  }|\widetilde{G}_\eta |^2  - \widehat{\widetilde{T}}_\eta^* \widehat{\widetilde{v}}_\eta \right)\Big|_{ 1} +   \sum_{\eta = x,y,z} \left(\frac{1}{Z_\eta  }|\widetilde{G}_\eta |^2  + \widehat{\widetilde{T}}_\eta^* \widehat{\widetilde{v}}_\eta \right)\Big|_{-1}\right)_{ik} h_ih_k,
  \end{split}
  \end{equation}
  }
  Next we add \eqref{eq:Laplace_disc_elastic_pml_1A_1D_proof_2} to its complex conjugate, and the spatial derivative terms vanish, we have
   {
 \small
 \begin{equation}\label{eq:Laplace_disc_elastic_pml_1A_1D_proof_3}
\begin{split}
\Re\left(sS_x\right)\widetilde{\mathbf{Q}}^\dagger {\mathbf{H}}\mathbf{P}^{-1}  \widetilde{\mathbf{Q}} &= - \frac{\Delta{y}}{2}  \frac{\Delta{z}}{2} \sum_{i=1}^{P+1}\sum_{k= 1}^{P+1}\left(\sum_{\eta = x,y,z} \left(\frac{1}{Z_\eta  }|\widetilde{G}_\eta |^2  - \widehat{\widetilde{T}}_\eta^* \widehat{\widetilde{v}}_\eta \right)\Big|_{ 1} +   \sum_{\eta = x,y,z} \left(\frac{1}{Z_\eta  }|\widetilde{G}_\eta |^2  + \widehat{\widetilde{T}}_\eta^* \widehat{\widetilde{v}}_\eta \right)\Big|_{-1}\right)_{ik} h_ih_k \\
&+ \frac{1}{2}\left(  \widetilde{\mathbf{Q}}^\dagger {\mathbf{H}}\mathbf{P}^{-1}  \widetilde{\mathbf{F}}  +  \widetilde{\mathbf{F}}^\dagger {\mathbf{H}}\mathbf{P}^{-1}  \widetilde{\mathbf{Q}} \right).
%
  \end{split}
  \end{equation}
  }
  Using Cauchy-Schwarz inequality for the source term in \eqref{eq:Laplace_disc_elastic_pml_1A_1D_proof_3} and collecting contributions from both sides of the elements completes the proof.
\end{proof}
\subsubsection{The discrete PML  edge problem}
Consider now   the discrete  PML  \eqref{eq:Laplace_disc_elastic_pml_1A_0} in the $xy$-edge region.
 In \eqref{eq:Laplace_disc_elastic_pml_1A_0}, let  $d_x = d_y = d\ge >0$, $d_z \equiv 0$, $\mathbf{D}_z = \widetilde{\mathbf{FL}}_{z} = 0$. 
 {
 \begin{equation}\label{eq:Laplace_disc_elastic_pml_1A_2D}
\begin{split}
\mathbf{P}^{-1} \left(sS_x\right) \widetilde{\mathbf{Q}} = &\sum_{\xi = x, y} \left[\frac{1}{2}\left(\mathbf{A}_{\xi}\mathbf{D}_{\xi}  - \mathbf{H}_{\xi}^{-1}\mathbf{A}_{\xi}\mathbf{D}_{\xi}^{T}\mathbf{H}_{\xi}    + \mathbf{H}_{\xi}^{-1}\mathbf{A}_{\xi}\left(\mathbf{B}_{\xi}\left(1,1\right)\right) -  \mathbf{B}_{\xi}\left(-1,-1\right)\right)\widetilde{\mathbf{Q}} \right]\\
- &\sum_{\xi = x, y} \left[\mathbf{H}_{\xi}^{-1}{\left({\mathbf{e}_{\xi}(-1)} \widetilde{\mathbf{FL}}_{\xi} +  {\mathbf{e}_{\xi}(1)} \widetilde{\mathbf{FR}}_{\xi}  \right)} \right] + \mathbf{P}^{-1} \widetilde{\mathbf{F}}
  \end{split}
  \end{equation}
  }

We formulate the discrete equivalence of Theorem \ref{theo:PML_2D_Edge_Laplace}.
 \begin{theorem}\label{theo:PML_2D_Edge_Laplace_Disc}
 Consider the  semi-discrete PML equation  in the Laplace space \eqref{eq:Laplace_disc_elastic_pml_1A_2D}, with $d_x = d_y = d \ge 0$, $\alpha_x = \alpha_y = \alpha \ge 0$ and $\Re(s) \ge a> 0$.  We have 
 {
 \small
 \begin{equation}
 \begin{split}
& \|\sqrt{\Re(s S_x)}\widetilde{\mathbf{Q}}^{-}\left(s\right)\|^2_{hP} + \|\sqrt{\Re(s S_x)}\widetilde{\mathbf{Q}}^{+}\left(s\right)\|^2_{hP}  \le  \|\widetilde{\mathbf{Q}}^{-}\left(s\right)\|_{hP} \|\widetilde{\mathbf{F}}^{-}\left( s\right)\|_{hP}  + \|\widetilde{\mathbf{Q}}^{+}\left(s\right)\|_{hP} \|\widetilde{\mathbf{F}}^{+}\left(s\right)\|_{hP}+  \widetilde{\mathrm{BT}}_h, \quad \Re(s S_x) \ge a \\
& \widetilde{\mathrm{BT}}_h =  \mathcal{BT}_s\left(\widehat{\widetilde{v}}_{\eta}^{-}, \widehat{\widetilde{T}}_{\eta}^{-}\right) + \mathcal{BT}_s\left(\widehat{\widetilde{v}}_{\eta}^{+}, \widehat{\widetilde{T}}_{\eta}^{+}\right)  +  \mathcal{IT}_s\left(\widehat{\widetilde{v}}^{\pm}, \widehat{\widetilde{T}}^{\pm} \right) +  \mathcal{F}_{luc}\left({\widetilde{G}}^{-},  Z^{-}\right)  + \mathcal{F}_{luc}\left({\widetilde{G}}^{+},  Z^{+}\right) \le 0,
\end{split}
\end{equation}
}
where the surface terms $\widetilde{\mathrm{BT}}_h$ are negative semi-definite.
%
 \end{theorem}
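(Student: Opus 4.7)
The plan is to mimic the proof of the one-dimensional strip case, exploiting the crucial simplification that when $d_x=d_y=d$ and $\alpha_x=\alpha_y=\alpha$ we have $S_x=S_y$, hence $S_x/S_\xi=1$ for $\xi=x,y$ in \eqref{eq:Laplace_disc_elastic_pml_1A_0}. With the penalty choice $\theta_\xi=1$ already absorbed into \eqref{eq:Laplace_disc_elastic_pml_1A_2D}, the discrete edge problem therefore has exactly the same algebraic structure as a pure (undamped) two-dimensional DG semi-discretization multiplied uniformly by $sS_x$, with a source term $\widetilde{\mathbf{F}}$ on the right. This is precisely the reason one pays for restricting Theorem \ref{theo:PML_2D_Edge_Laplace_Disc} to equal damping: it removes the $S_x/S_\xi$ weights which would otherwise obstruct a clean energy argument.

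First I would multiply \eqref{eq:Laplace_disc_elastic_pml_1A_2D} from the left by $\widetilde{\mathbf{Q}}^\dagger\mathbf{H}$ and sum over the two elements. Using the discrete SBP identity \eqref{eq:disc_sbp} direction by direction, the volume differentiation terms reorganize into
\[
\tfrac{1}{2}\widetilde{\mathbf{Q}}^\dagger\bigl(\mathbf{A}_\xi(\mathbf{H}\mathbf{D}_\xi)-(\mathbf{H}\mathbf{D}_\xi)^T\mathbf{A}_\xi\bigr)\widetilde{\mathbf{Q}}
\]
plus face contributions carried by $\mathbf{H}\mathbf{H}_\xi^{-1}$, exactly as in the 1D strip proof. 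The surface identity \eqref{eq:identity_001}, which combines the SBP boundary term with the flux fluctuations $\widetilde{\mathbf{FL}}_\xi$, $\widetilde{\mathbf{FR}}_\xi$ into the sum of a nonnegative dissipative fluctuation in $|\widetilde{G}_\eta|^2/Z_\eta$ and a hat-variable boundary/interface term $\widehat{\widetilde{T}}_\eta^{*}\widehat{\widetilde{v}}_\eta$, now has to be invoked for both $\xi=x$ and $\xi=y$ faces. For the two-element configuration this produces five external face contributions and one interface contribution per element per axis direction, assembled into the quantities $\mathcal{BT}_s$, $\mathcal{IT}_s$, $\mathcal{F}_{luc}$.

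Next I would add the resulting identity to its complex conjugate. The volume terms in the first line of \eqref{eq:Laplace_disc_elastic_pml_1A_2D} are skew-Hermitian under the $\mathbf{H}$-inner product (the very purpose of the SBP property), so they cancel identically. The left-hand side becomes $2\,\Re(sS_x)\,\widetilde{\mathbf{Q}}^\dagger\mathbf{H}\mathbf{P}^{-1}\widetilde{\mathbf{Q}} = 4\|\sqrt{\Re(sS_x)}\widetilde{\mathbf{Q}}\|_{hP}^{2}$, using \eqref{eq:disc_elastic_energy_pml} and the fact that the weight $\Re(sS_x)$ is real and commutes with the inner product. The right-hand side is the sum of the quadratic face terms from \eqref{eq:identity_001} (over both $\xi=x$ and $\xi=y$), plus $\tfrac{1}{2}(\widetilde{\mathbf{Q}}^\dagger\mathbf{H}\mathbf{P}^{-1}\widetilde{\mathbf{F}}+\widetilde{\mathbf{F}}^\dagger\mathbf{H}\mathbf{P}^{-1}\widetilde{\mathbf{Q}})$. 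Apply Cauchy--Schwarz to the source pairing to bound it by $\|\widetilde{\mathbf{Q}}\|_{hP}\|\widetilde{\mathbf{F}}\|_{hP}$, and collect the face terms from the two elements into $\widetilde{\mathrm{BT}}_h$. Negative semi-definiteness of $\widetilde{\mathrm{BT}}_h$ follows term by term: $\mathcal{F}_{luc}\le 0$ by inspection, $\mathcal{BT}_s\le 0$ from \eqref{eq:simplify_3_Laplace}/\eqref{eq:BT_Negative} applied to the hat-variables, and $\mathcal{IT}_s\equiv 0$ from \eqref{eq:simplify_4_Laplace}.

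The main obstacle, and the reason this is not a one-line corollary of the strip theorem, is verifying that the two-direction assembly in \eqref{eq:identity_001} still telescopes cleanly across the one internal $x$-interface between the two elements: the hat-variable construction must be shared across that interface (so the two elements contribute compatible $\widehat{\widetilde{T}}$, $\widehat{\widetilde{v}}$ values and the $y$-face contributions of each element remain independent on their external $y$-boundaries). Once this bookkeeping is checked, the estimate $\Re(sS_x)\ge a>0$ from \eqref{eq:scaled_variables_3} delivers the stated bound, and the proof concludes exactly as in the strip case.
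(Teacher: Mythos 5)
Your proposal follows essentially the same route as the paper's own proof in Appendix \ref{sec:proof_theorem8}: multiply \eqref{eq:Laplace_disc_elastic_pml_1A_2D} from the left by $\widetilde{\mathbf{Q}}^\dagger\mathbf{H}$, invoke the surface identity \eqref{eq:identity_001} for both $\xi=x$ and $\xi=y$, add the complex conjugate so the skew-Hermitian volume terms cancel, then apply Cauchy--Schwarz to the source pairing and use the sign properties of the hat-variable boundary, interface, and fluctuation terms. The differences are only cosmetic (constant-factor bookkeeping in the norm definition and your explicit remark about the shared hat-variables at the internal interface), so your argument matches the paper's.
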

The proof of Theorem \ref {theo:PML_2D_Edge_Laplace_Disc} has been moved to  Appendix \ref{sec:proof_theorem8}.
 \subsubsection{The discrete PML  corner problem}
Consider now  a the discrete  PML   \eqref{eq:Laplace_disc_elastic_pml_1A_0} in the corner region, where all damping functions are nonzero, $d_\xi  = d \ge 0$ for all $\xi = x, y,z$.
For this case the  PML complex metrics are identical $S_y = S_x = S_z$.  Thus $S_x/S_\xi = 1$, and from \eqref{eq:Laplace_disc_elastic_pml_1A_0}  we have
 {
 \begin{equation}\label{eq:Laplace_disc_elastic_pml_1A_3D}
\begin{split}
\mathbf{P}^{-1} \left(sS_x\right) \widetilde{\mathbf{Q}} = &\sum_{\xi = x, y, z} \left[\frac{1}{2}\left(\mathbf{A}_{\xi}\mathbf{D}_{\xi}  - \mathbf{H}_{\xi}^{-1}\mathbf{A}_{\xi}\mathbf{D}_{\xi}^{T}\mathbf{H}_{\xi}    + \mathbf{H}_{\xi}^{-1}\mathbf{A}_{\xi}\left(\mathbf{B}_{\xi}\left(1,1\right)\right) -  \mathbf{B}_{\xi}\left(-1,-1\right)\right)\widetilde{\mathbf{Q}} \right]\\
- &\sum_{\xi = x, y, z} \left[\mathbf{H}_{\xi}^{-1}{\left({\mathbf{e}_{\xi}(-1)} \widetilde{\mathbf{FL}}_{\xi} +  {\mathbf{e}_{\xi}(1)} \widetilde{\mathbf{FR}}_{\xi}  \right)} \right] + \mathbf{P}^{-1} \widetilde{\mathbf{F}}.
  \end{split}
  \end{equation}
  }

We will now state the discrete equivalence of  Theorem \ref{theo:PML_3D_Corner_Laplace}.
 \begin{theorem}\label{theo:PML_3D_Corner_Laplace_Disc}
 Consider the  semi-discrete PML equation  in the Laplace space \eqref{eq:Laplace_disc_elastic_pml_1A_3D}, with $d_x = d_y = d_z = d\ge 0$,  $\alpha_x = \alpha_y = \alpha_z = \alpha  \ge 0$ and $\Re(s) \ge a > 0$.  We have 
 $
 \Re(s S_x)\ge a > 0
 $
 and
 {
 \small
 \begin{equation}
 \begin{split}
& \|\sqrt{\Re(s S_x)}\widetilde{\mathbf{Q}}^{-}\left(s\right)\|^2_{hP} + \|\sqrt{\Re(s S_x)}\widetilde{\mathbf{Q}}^{+}\left(s\right)\|^2_{hP}  \le  \|\widetilde{\mathbf{Q}}^{-}\left(s\right)\|_{hP} \|\widetilde{\mathbf{F}}^{-}\left( s\right)\|_{hP}  + \|\widetilde{\mathbf{Q}}^{+}\left(s\right)\|_{hP} \|\widetilde{\mathbf{F}}^{+}\left(s\right)\|_{hP}+  \widetilde{\mathrm{BT}}_h,  \\
& \widetilde{\mathrm{BT}}_h =  \mathcal{BT}_s\left(\widehat{\widetilde{v}}_{\eta}^{-}, \widehat{\widetilde{T}}_{\eta}^{-}\right) + \mathcal{BT}_s\left(\widehat{\widetilde{v}}_{\eta}^{+}, \widehat{\widetilde{T}}_{\eta}^{+}\right)  +  \mathcal{IT}_s\left(\widehat{\widetilde{v}}^{\pm}, \widehat{\widetilde{T}}^{\pm} \right) +  \mathcal{F}_{luc}\left({\widetilde{G}}^{-},  Z^{-}\right)  + \mathcal{F}_{luc}\left({\widetilde{G}}^{+},  Z^{+}\right) \le 0,
\end{split}
\end{equation}
}
where the surface terms are negative semi-definite.
 \end{theorem}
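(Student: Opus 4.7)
The approach is to mimic the proofs of Theorems for the strip and edge cases, exploiting the key simplification that when all damping and CFS parameters coincide, $S_x = S_y = S_z$, so the ratios $S_x/S_\xi$ that multiply each directional flux term in \eqref{eq:Laplace_disc_elastic_pml_1A_0} collapse to unity; this is exactly what yields \eqref{eq:Laplace_disc_elastic_pml_1A_3D}. The bound $\Re(sS_x) \ge a > 0$ is immediate from \eqref{eq:scaled_variables_3} since $d,\alpha \ge 0$ and $\Re s \ge a$. Thus the analytical task reduces to the energy argument applied to \eqref{eq:Laplace_disc_elastic_pml_1A_3D} with the choice $\theta_\xi = 1$ already baked in.

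First, I would left-multiply \eqref{eq:Laplace_disc_elastic_pml_1A_3D} by $\widetilde{\mathbf{Q}}^\dagger \mathbf{H}$ on each of the two elements. The volume derivative contributions group into the anti-Hermitian pieces $\tfrac12 \widetilde{\mathbf{Q}}^\dagger (\mathbf{A}_\xi(\mathbf{H}\mathbf{D}_\xi) - (\mathbf{H}\mathbf{D}_\xi)^T \mathbf{A}_\xi)\widetilde{\mathbf{Q}}$ for each $\xi = x,y,z$, using that $\mathbf{A}_\xi$, $\mathbf{P}^{-1}$, and $\mathbf{H}$ are symmetric/Hermitian. The remaining boundary contributions from $\mathbf{H}\mathbf{H}_\xi^{-1}$ times the projection terms $\mathbf{B}_\xi(\pm 1,\pm 1)$ and the flux fluctuations $\widetilde{\mathbf{FL}}_\xi, \widetilde{\mathbf{FR}}_\xi$ are rewritten via the identity \eqref{eq:identity_001} and its analogues for $\xi = y, z$, producing a sum over the six element faces of quadratic boundary functionals in the hat-variables plus the nonnegative fluctuation $|\widetilde{G}_\eta|^2/Z_\eta$ terms.

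Next I would add the resulting equation to its complex conjugate. The anti-Hermitian volume terms cancel exactly, and the left-hand side becomes $2\Re(sS_x)$ times the discrete squared norm $\|\widetilde{\mathbf{Q}}\|_{hP}^2$. The right-hand side yields, on each element, a Hermitized source pairing which is bounded via Cauchy--Schwarz by $\|\widetilde{\mathbf{Q}}\|_{hP}\|\widetilde{\mathbf{F}}\|_{hP}$, together with a sum over the element faces of the quadratic boundary/flux terms. Summing the identities obtained on the two elements $\pm$ and regrouping the face terms according to whether they belong to an external boundary, an external free face (external to a single element), or the shared interior interface at $x=0$, each group reassembles into precisely $\mathcal{BT}_s$, $\mathcal{F}_{luc}$, or $\mathcal{IT}_s$ as defined in Section 6. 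By \eqref{eq:identity_bc}--\eqref{eq:identity_interface} (and their discrete analogues already invoked in Theorem \ref{theo:discrete_case_no_damping}) these terms are each $\le 0$ (or identically zero for the interface), giving $\widetilde{\mathrm{BT}}_h \le 0$. Dividing by $2$ finishes the estimate.

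The main obstacle I anticipate is purely bookkeeping rather than conceptual: verifying that the three directional applications of the identity \eqref{eq:identity_001}, together with the Kronecker-product structure of $\mathbf{H}$ and $\mathbf{H}_\xi^{-1}$, do not produce stray cross terms when the faces are assembled. This worked cleanly in the edge case of Theorem \ref{theo:PML_2D_Edge_Laplace_Disc}; the key reason it still works here is that the choice $\theta_\xi = 1$ eliminated the destabilizing PML flux term highlighted in \eqref{eq:Laplace_disc_elastic_pml_1A}, so every face contribution already has the same sign structure as in the undamped discrete energy identity \eqref{eq:discrete_energy_estimate_elastic_3D}. Once that reduction is made, the corner proof is essentially the edge proof with one more index in the sum, and the negative semi-definiteness of $\widetilde{\mathrm{BT}}_h$ is inherited face-by-face from \eqref{eq:BT_Negative} and \eqref{eq:simplify_4_Laplace}.
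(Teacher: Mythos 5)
Your proposal matches the paper's own proof in Appendix F essentially step for step: left-multiplication of \eqref{eq:Laplace_disc_elastic_pml_1A_3D} by $\widetilde{\mathbf{Q}}^\dagger\mathbf{H}$, grouping the volume terms into anti-Hermitian pieces, rewriting the face terms via the identity \eqref{eq:identity_001}, adding the complex conjugate to cancel the derivative terms, applying Cauchy--Schwarz to the source pairing, and collecting the contributions from the two elements into $\mathcal{BT}_s$, $\mathcal{IT}_s$, and $\mathcal{F}_{luc}$. The preliminary observations ($S_x/S_\xi = 1$ and $\Re(sS_x)\ge a$ from \eqref{eq:scaled_variables_3}) and the role of $\theta_\xi = 1$ are likewise exactly how the paper sets up the corner case, so your proof is correct and takes the same route.
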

The proof of Theorem \ref {theo:PML_3D_Corner_Laplace_Disc} has been moved to Appendix \ref{sec:proof_theorem9}.
 
 We will now formulate a discrete equivalence of Theorem \ref{Theo:estimate_PML} as the corollary.
\begin{corollary}\label{corollary:estimate_PML}
Consider the semi-discrete energy estimate in the Laplace space
\begin{equation}\label{eq:disc_energy_estimate_pml_laplace_0}
\begin{split}
& \|\widetilde{\mathbf{Q}} (\cdot ,\cdot ,\cdot , s)\|_{hP}^2 \le \frac{1}{a }\|\widetilde{\mathbf{Q}} (\cdot ,\cdot ,\cdot , s)\|_{hP} \|\widetilde{\mathbf{F}} (\cdot ,\cdot ,\cdot , s)\|_{P}, \quad \Re(s)\ge a>0.
\end{split}
\end{equation}
For any $a> 0$ and $T>0$ we have
  \begin{align}\label{eq:estimate_0_0_disc}
  \int_0^{T} e^{-2at}  \|{\mathbf{Q}} (\cdot ,\cdot ,\cdot , t)\|_{hP}^2 dt \le  \frac{1}{a^2}\int_0^{T} e^{-2at} \left( \| {\mathbf{F}} (\cdot ,\cdot ,\cdot , t)\|_{hP}^2\right) dt.
  \end{align}
\end{corollary}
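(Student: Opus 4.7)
The plan is to mirror the proof of Theorem~\ref{Theo:estimate_PML} line by line, with the continuous $P$-weighted norm replaced everywhere by its discrete counterpart $\|\cdot\|_{hP}$ from \eqref{eq:disc_elastic_energy_pml}. The key observation is that the Parseval/Plancherel identity for the Laplace transform is an identity on the temporal variable alone, so it is insensitive to whether the spatial norm is the continuous weighted $L^2$ norm or the discrete quadrature-weighted analogue, provided the latter is a genuine inner-product norm (which is ensured by the positivity of the quadrature weights $h_i h_j h_k$ and of $\mathbf{P}^{-1}$).

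First I would divide both sides of the hypothesis \eqref{eq:disc_energy_estimate_pml_laplace_0} by $\|\widetilde{\mathbf{Q}}(\cdot,\cdot,\cdot,s)\|_{hP}$ (treating the trivial case $\widetilde{\mathbf{Q}}\equiv 0$ separately) and then square, obtaining
\begin{equation*}
\|\widetilde{\mathbf{Q}}(\cdot,\cdot,\cdot,s)\|_{hP}^2 \;\le\; \frac{1}{a^{2}}\,\|\widetilde{\mathbf{F}}(\cdot,\cdot,\cdot,s)\|_{hP}^2, \qquad \Re(s)\ge a>0.
\end{equation*}
This pointwise-in-$s$ estimate is the discrete analogue of \eqref{eq:energy_estimate_pml_laplace_corner_cont_0}.

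Next I would integrate in $b=\Im(s)$ along the vertical line $\Re(s)=a$ and invoke the Parseval identity recalled in the proof of Theorem~\ref{Theo:estimate_PML}:
\begin{equation*}
\int_{0}^{+\infty} e^{-2at}\,\|f(\cdot,\cdot,\cdot,t)\|_{hP}^2\,dt \;=\; \frac{1}{2\pi}\int_{-\infty}^{+\infty}\|\widetilde{f}(\cdot,\cdot,\cdot,a+ib)\|_{hP}^2\,db.
\end{equation*}
Applying this identity with $f=\mathbf{Q}$ on the left and $f=\mathbf{F}$ on the right of the squared estimate yields
\begin{equation*}
\int_{0}^{+\infty} e^{-2at}\,\|\mathbf{Q}(\cdot,\cdot,\cdot,t)\|_{hP}^2\,dt \;\le\; \frac{1}{a^{2}}\int_{0}^{+\infty} e^{-2at}\,\|\mathbf{F}(\cdot,\cdot,\cdot,t)\|_{hP}^2\,dt.
\end{equation*}

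Finally, to pass from integration over $[0,+\infty)$ to integration over $[0,T]$, I would use the standard causality argument (``the future cannot influence the past''): if $\mathbf{F}$ is replaced by its restriction to $[0,T]$ (extended by zero for $t>T$), the corresponding solution $\mathbf{Q}$ agrees with the original one on $[0,T]$ because the semi-discrete system \eqref{eq:disc_elastic_pml_1A}--\eqref{eq:disc_elastic_pml_3A} is causal, and the infinite-horizon estimate applied to this truncated source yields \eqref{eq:estimate_0_0_disc}. The only nontrivial point to verify is that the Parseval identity is applicable, i.e.\ that $e^{-at}\mathbf{Q}$ and $e^{-at}\mathbf{F}$ lie in $L^2((0,\infty);\|\cdot\|_{hP})$; this follows from the fact that after the shift by $e^{-t}$ used to homogenize the initial data (as described just before \eqref{eq:elastic_pml_1A_Laplace}) the sources decay exponentially, and the hypothesis then forces $\widetilde{\mathbf{Q}}$ to be square-integrable on $\Re(s)=a$, so no further growth estimate is needed.
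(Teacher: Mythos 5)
Your proposal is correct and follows essentially the same route as the paper: the paper proves this corollary precisely by adapting the proof of Theorem~\ref{Theo:estimate_PML} (square the Laplace-space bound after cancelling one factor of $\|\widetilde{\mathbf{Q}}\|_{hP}$, apply the Parseval identity along $\Re(s)=a$ with the discrete norm in place of the continuous one, and invoke causality to truncate to $[0,T]$). Your additional observations --- that the Parseval identity is insensitive to the spatial norm because $\|\cdot\|_{hP}$ is a fixed positive-definite quadratic form, and that the exponential decay of the homogenized sources justifies its applicability --- are exactly the points that make the adaptation legitimate.
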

The proof of Corollary \ref{corollary:estimate_PML} can be easily adapted from the proof of Theorem \ref{Theo:estimate_PML}.

 Numerical experiments presented in the next section will demonstrate  further the  importance  of the analysis and theory derived in this section.

 \section{Numerical experiments}
In this section, we present numerical experiments, verifying the theoretical results and exploring further the numerical properties of the discrete PML. In particular,  we will quantify numerical errors introduced by discretizing the PML as well as verify the stability analysis of the last section. We will consider first a 2D problem and perform detailed numerical experiments demonstrating numerical stability, and showing convergence  for h- and p-refinement.   We will  proceed later to  3D numerical simulations of linear elastic waves in an unbounded and semi-bounded domain.

\subsection{PML for 2D  elastodynamics}
We consider the 2D problem with $\partial / \partial z = 0$ and $d_z = 0$. Note that  the auxiliary variable $\mathbf{w}_z = 0$, vanishes identically.
The domain is rectangular isotropic elastic solid  with the dimension $\left(x, y\right) = [-50~\ \text{km}, 50~\ \text{km}]\times[0, 50~\ \text{km}]$. 
We consider a homogeneous isotropic elastic medium with $\rho = 2.7$ g/cm$^3$, $c_p = 6.0 $ km/s, and $c_s = 3.464 $ km/s, where $c_p$ is the p-wave speed and $c_s$ is the shear wave speed.
At the top  surface of the elastic solid, at $y = 0$, we set the free surface boundary conditions, with  $\gamma_\eta = 1$ in \eqref{eq:BC_General2}. 

For the velocity fields,  we set the smooth initial condition
\begin{align}
v_x = v_y = e^{-\log\left({2}\right)\frac{x^2 + (y-25)^2}{9}},
\end{align}
 and use zero initial condition for the stress fields and all auxiliary variables.
The damping profile is a cubic monomial
\begin{equation}\label{eq:damping_func}
\begin{split}
&d\left(x\right) = \left \{
\begin{array}{rl}
0 \quad {}  \quad {}& \text{if} \quad |x| \le 50,\\
d_0\Big(\frac{|x|-50}{\delta}\Big)^3  & \text{if}  \quad |x| \ge 50,
\end{array} \right.
\end{split}
\end{equation}
where $d_0\ge 0$ is the damping strength. We will use 
\begin{equation}\label{eq:damping_coef}
d_0 = \frac{4c_p}{2\delta}\ln{\frac{1}{{tol}}},
\end{equation}
where $c_p$ denotes the P-wave speed, and  ${tol}$ is the magnitude of the relative PML error \cite{SjPe}. 

We use nodal Lagrange polynomial basis of  degree $ P \le 12 $, and GLL quadrature nodes, which gives $P+1$ optimal order of accuracy.  Numerical solutions are evolved in time using the high order  ADER  scheme \cite{PeltiesdelaPuenteAmpueroBrietzkeKaser2012,DumbserKaser2006}  of the same order of accuracy as the spatial discretization. The time step  is given by  

\begin{align}
\Delta{t} = \frac{\mathrm{CFL}}{\sqrt{c_p^2 + c_s^2}\left(2P+1\right)}\min\left(\Delta{x}, \Delta{y} \right)
\end{align}
 with $\mathrm{CFL} = 0.9$.
We will consider the  vertical PML strip problem and demonstrate numerical stability. 
We will chose parameters such that the total PML error converges to zero as we perform both $p$-refinement and $h$-refinement. Later  we simulate a half-plane problem surrounded by the PML. This  situation involves both the vertical (with $d_x(x) \ge 0$, $d_y(y) = 0$) and horizontal (with $d_x(x) = 0$, $d_y(y) \ge 0$) PML layers, and  PML corners  (with $d_x(x) \ge 0$, $d_y(y) \ge 0$) where both layers are simultaneously active.

\subsubsection{The vertical strip PML problem}
We consider the vertical layer  with $d_x(x) \ge 0$, $d_y(y) = 0$.
That is, in the $x$-direction we  introduce two additional layers, having $50\le |x| \le 50 + \delta$ in which the PML equations are solved. 
We terminate PML edges with the characteristic boundary condition,  with vanishing reflection coefficients $\gamma_\eta = 0$ in \eqref{eq:BC_General2}.
The bottom  boundary of the solid, at $y = 50$ km, is a fictional wall with   $\gamma_\eta = 0$ in \eqref{eq:BC_General2}, that is the absorbing boundary condition. 

We will investigate numerically the stability of the PML.    In the coming experiments we set the magnitude of the relative PML error $\mathrm{tol} = 10^{-6}$. We will use the complex frequency shift  $\alpha = 0.15$ in this experiment.  In a practical setup the PML width is considerably thin, we therefore use $\delta = 10$ km,  leading to the damping strength $d_0 = 16.58$. The width of the PML $\delta =10$ km corresponds to $10\%$ of the width of the computational domain. To verify numerical stability we will  consider separately $\theta_x = 0$ and $\theta_x =1$  in the experiments.

We set a uniform spatial step $\Delta{x} = \Delta{y} = 5$ km in both $x$ and $y$ axes and consider polynomial degree $p = 5$.  We run the simulation until $t = 100$ s.
Snapshots of the solution at the initial times are shown in Figure \ref{fig:snapshots_strip} showing how the initial Gaussian perturbation spreads, its interactions with the free-surface boundary conditions and the absorption properties of the PML. In  Figure \ref{fig:vel_t100} the snapshots  at the final time are shown for $\theta_x = 0$ and $\theta_x = 1$. Note that without the PML stabilizing parameter, $\theta_x = 0$, at $t = 50$ s the solutions start grow in the PML. The numerical growth is catastrophic, see Figure \ref{fig:Time_series_strip}. By the final time $t = 100$ s, the growth has spread everywhere in the domain polluting numerical simulations. However, with the PML stabilizing term $\theta_x = 1$, we regain numerical stability, see also  Figures \ref{fig:vel_t100} and \ref{fig:Time_series_strip}. This is consistent with the theory developed in this paper and the results presented in \cite{DuruKozdonKreiss2016,DuruGabKreiss2019} for the SBP finite difference method. We conclude that the PML stabilizing term $\theta_x = 1$ is necessary for the numerical stability of DG approximations of the  PML.

\begin{figure} [h!]
 \centering
\begin{subfigure}
    \centering
{\includegraphics[width=0.49\textwidth]{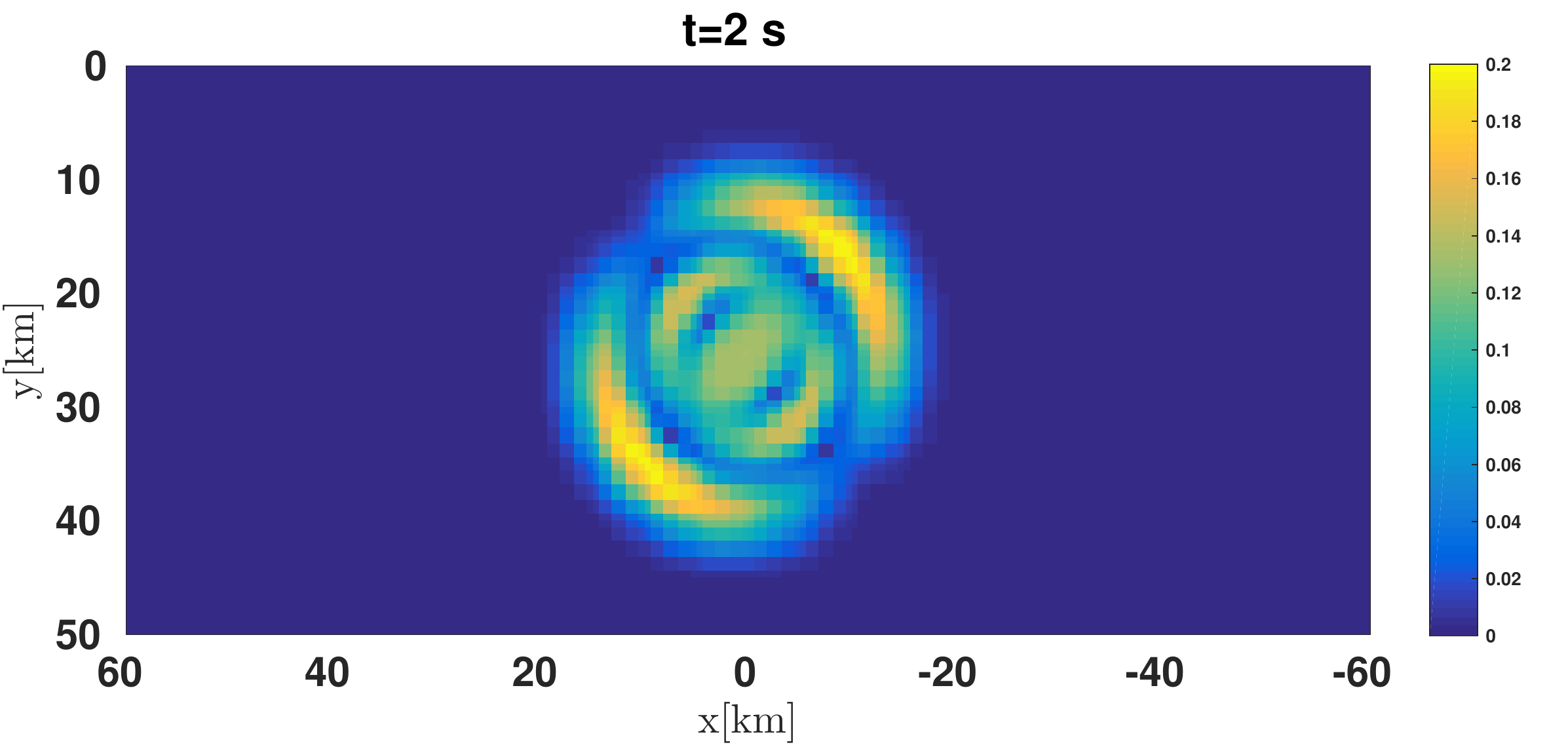}}%
\hspace{0.0cm}%
{\includegraphics[width=0.49\textwidth]{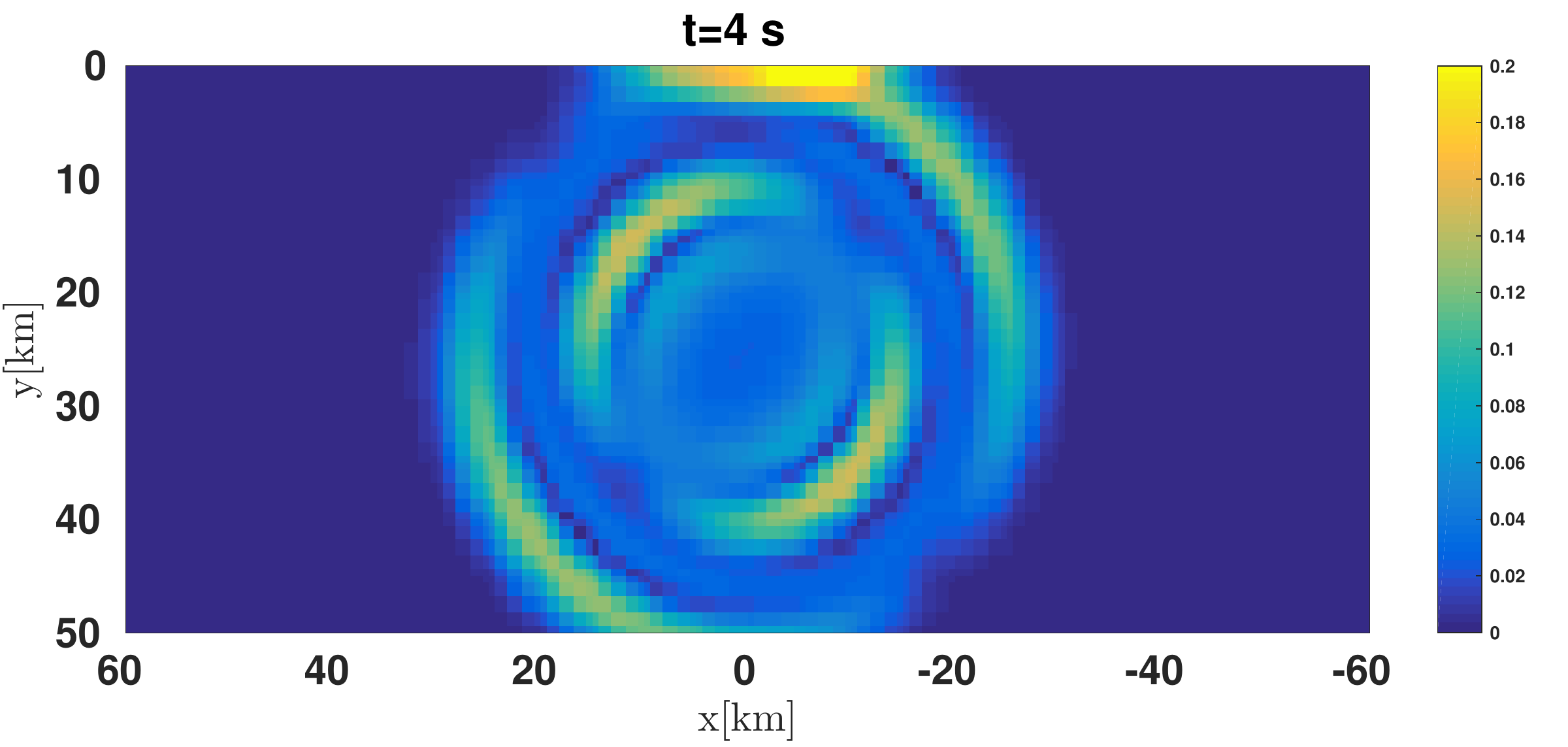}}%
\hspace{0.0cm}%
{\includegraphics[width=0.49\textwidth]{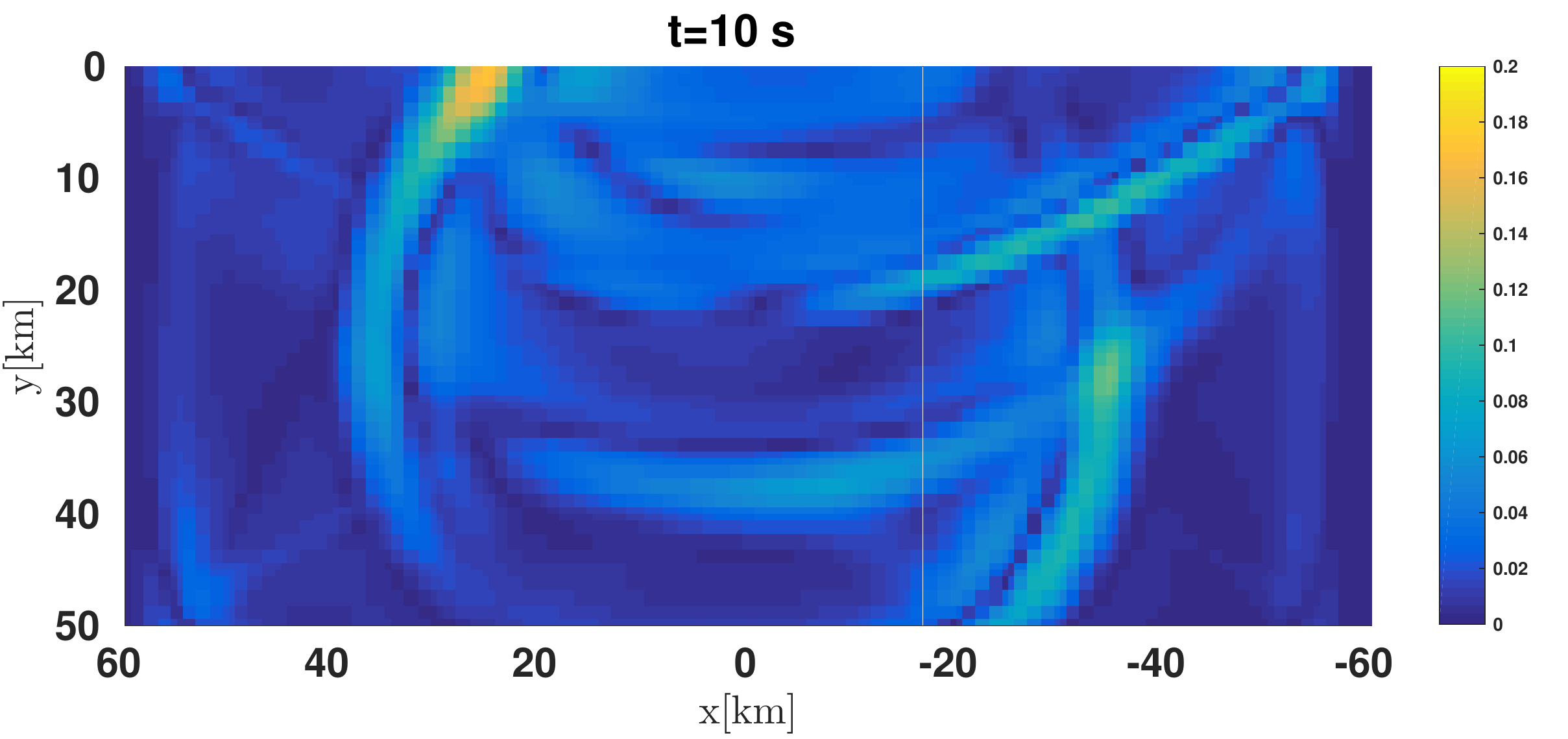}}%
\hspace{0.0cm}%
{\includegraphics[width=0.49\textwidth]{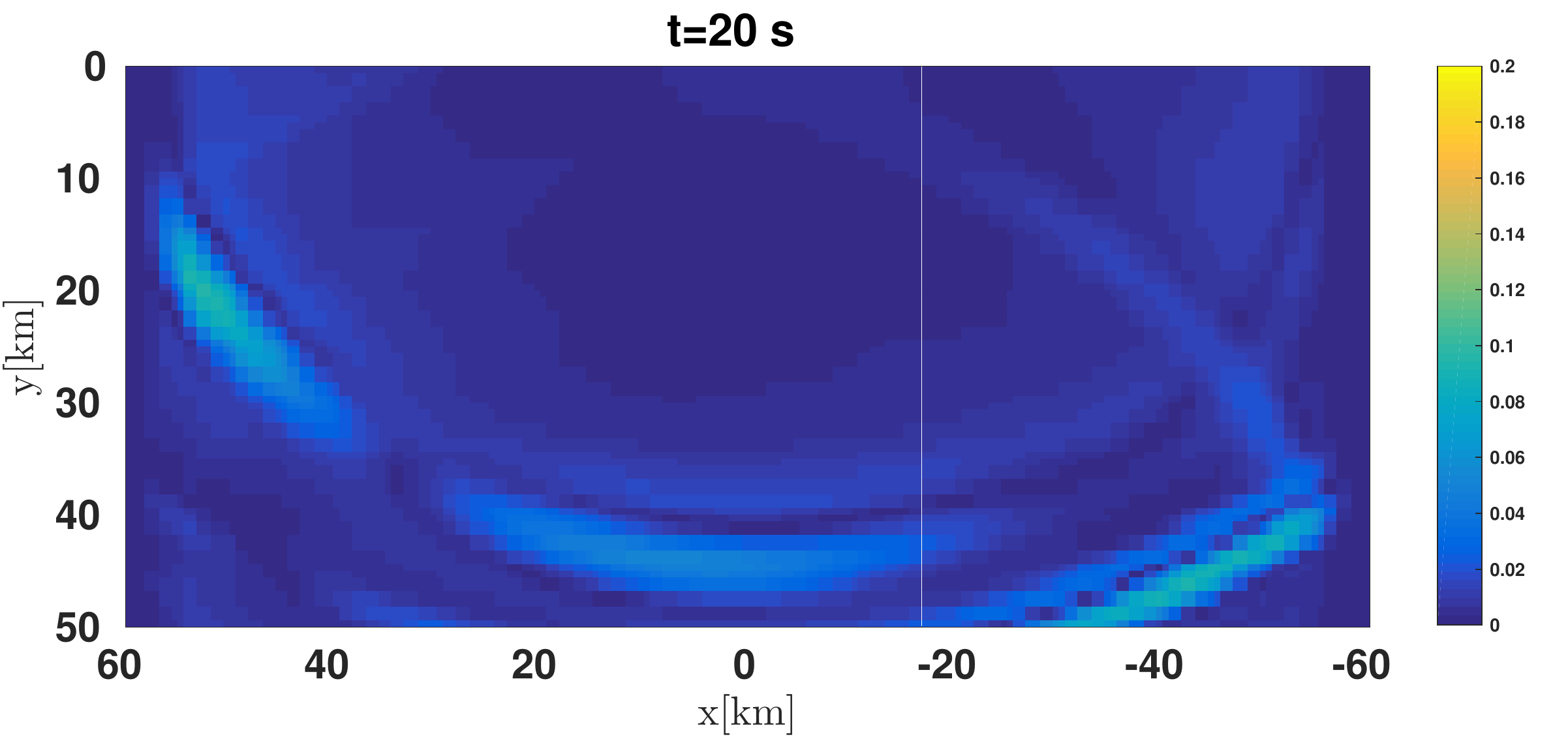}}%
     \end{subfigure}
    \caption{Snapshots of the absolute velocity $\sqrt{v_x^2 + v_y^2}$ at $t = 2, 4, 10$ and $20$ s.}
   \label{fig:snapshots_strip}
\end{figure}

\begin{figure}[h!]
\begin{subfigure}
    \centering
\stackunder[5pt]{\includegraphics[width=0.49\textwidth]{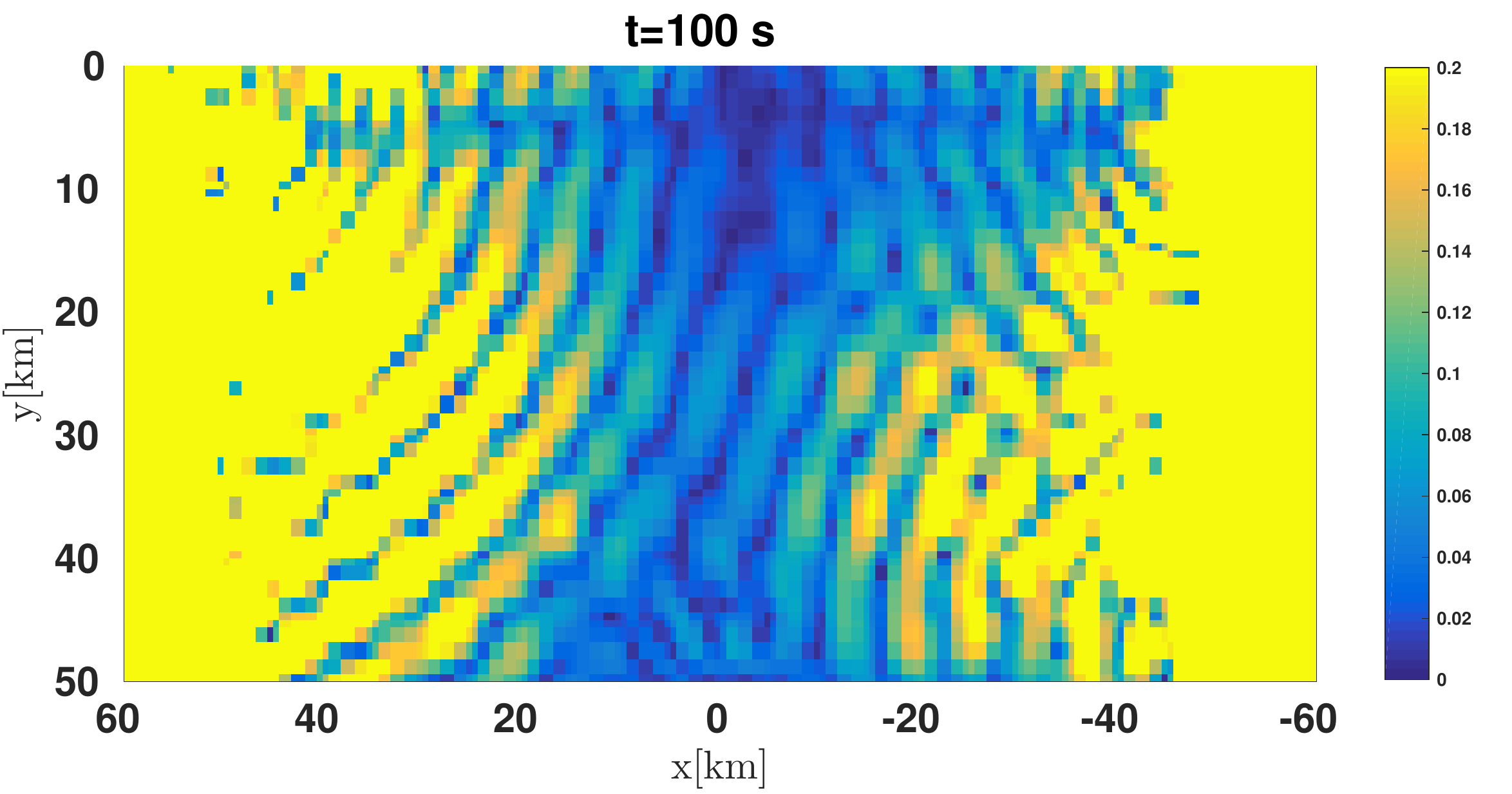}}{$\theta_x = 0$.}%
\hspace{0.0cm}%
\stackunder[5pt]{\includegraphics[width=0.49\textwidth]{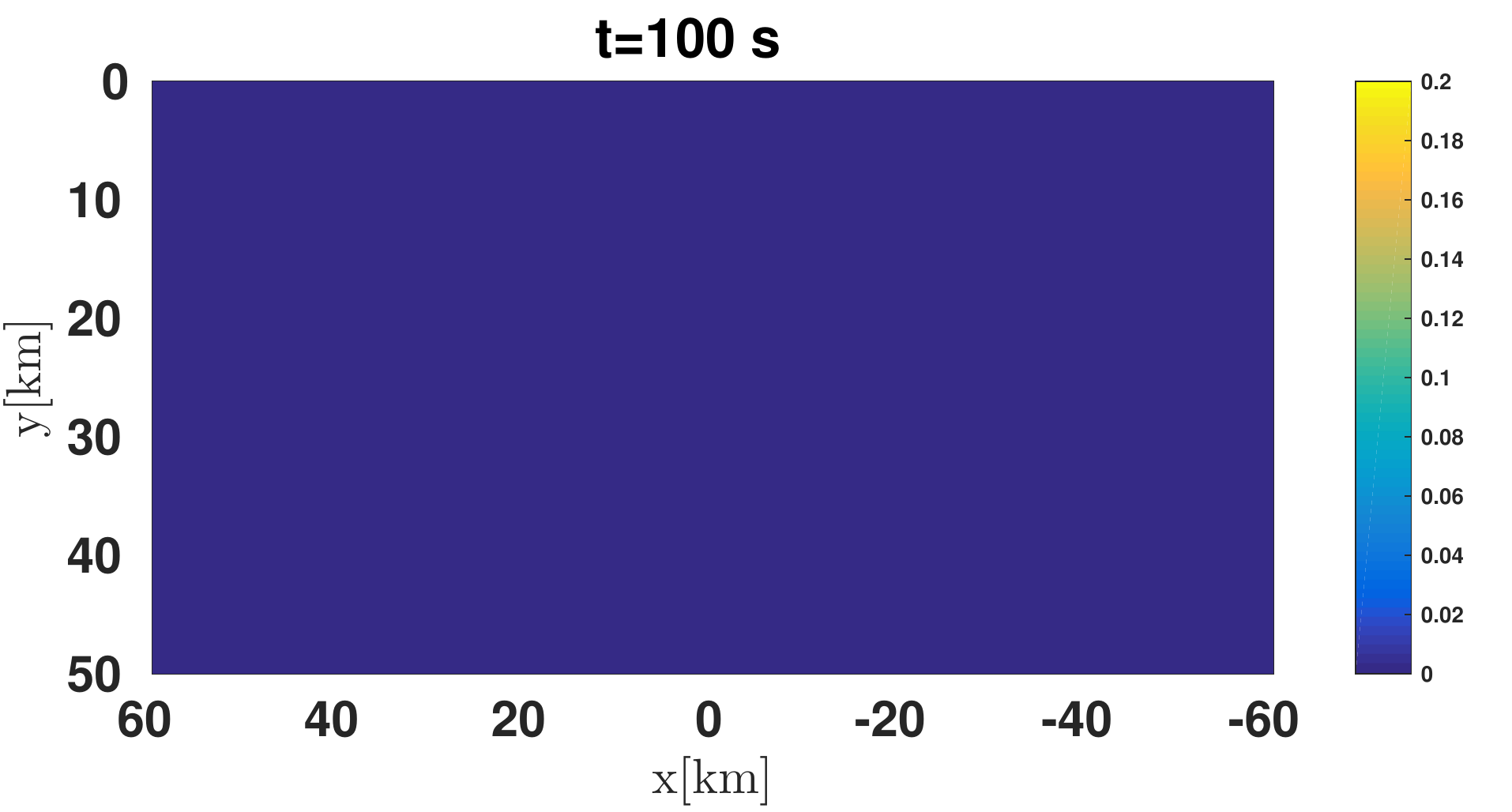}}{$\theta_x = 1$.}%
     \end{subfigure}
    \caption{Snapshots of the absolute velocity $\sqrt{v_x^2 + v_y^2}$ at $t = 100$ s.}
    \label{fig:vel_t100}
\end{figure}

\begin{figure} [h!]
 \centering
\includegraphics[width=0.49\linewidth, height=0.475\linewidth]{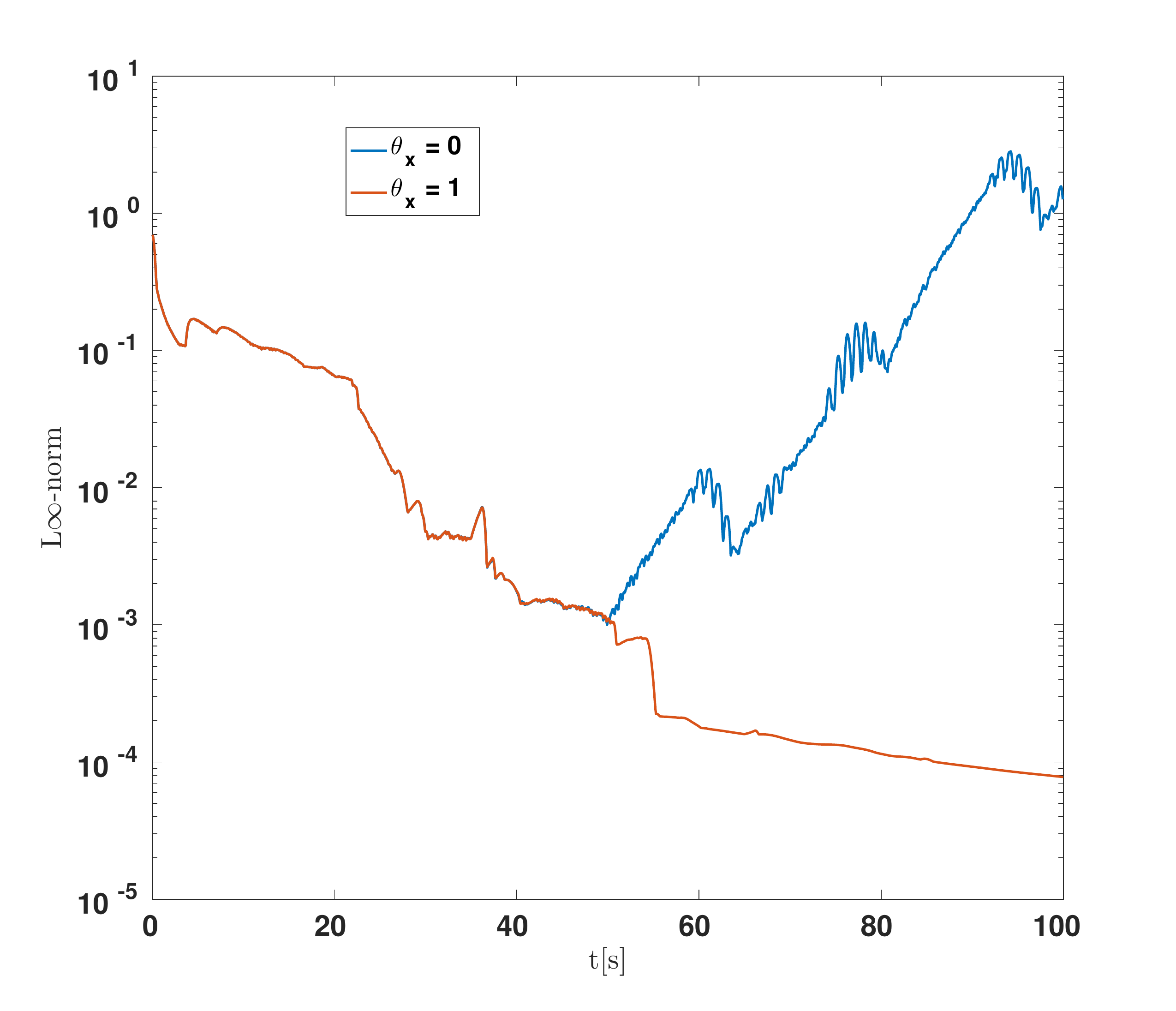}
 \caption{\textit{Time series of the L$_{\infty}$ norm of the velocity vector for the PML.}}
 \label{fig:Time_series_strip}
\end{figure}

 Next we verify numerical accuracy. We will show that we can choose PML parameters such that, for a fixed PML width $\delta = 10 $ km, the total PML error converges to zero at the rate of the underlying numerical method. We evolve the solutions until $t = 20$ s so that  the $p$-wave and the $s$-wave have traveled to through the PML and their reflections from  the external edge of the PML felt inside the simulation domain, $|x| \le 50$.  In order to evaluate PML errors, we compute a reference solution in  larger domain without the PML. We have chosen the reference domain to be large enough so that artificial reflections are not yet felt inside the simulation domain. By comparing the reference solution and the PML solution, measure in the maximum norm, in the interior of the  domain we obtain accurate measurement of the total  PML errors. 

We begin with $p$-refinement,  that is we fix the element size $\Delta{x} = \Delta{y}$ and increase polynomial degree, $p = 2, 4, 6, 8, 10$. The magnitude of the relative PML error is $\emph{tol} = \left[50(P+1)/\Delta{x}\right]^{-(P+1)}$, where $50(P+1)/\Delta{x}$ is the number of degrees of freedom spanning the width of the computational domain, $0 \le y \le 50$.  Note that  the magnitude of the relative PML error $\emph{tol}$ converges spectrally to zero as the polynomial degree $P$ increases.  All other discretization parameters are fixed with the PML stabilizing term $\theta_x = 1$ and element size $\Delta{x} = 5$ km. The PML spans  only 2 DG elements.   Numerical results are shown in Figure \ref{fig:Time_series_error_strip}. Note that PML errors converge spectrally to zero.

\begin{figure}[h!]
\begin{subfigure}
    \centering
\stackunder[5pt]{\includegraphics[width=0.49\linewidth, height=0.45\linewidth]{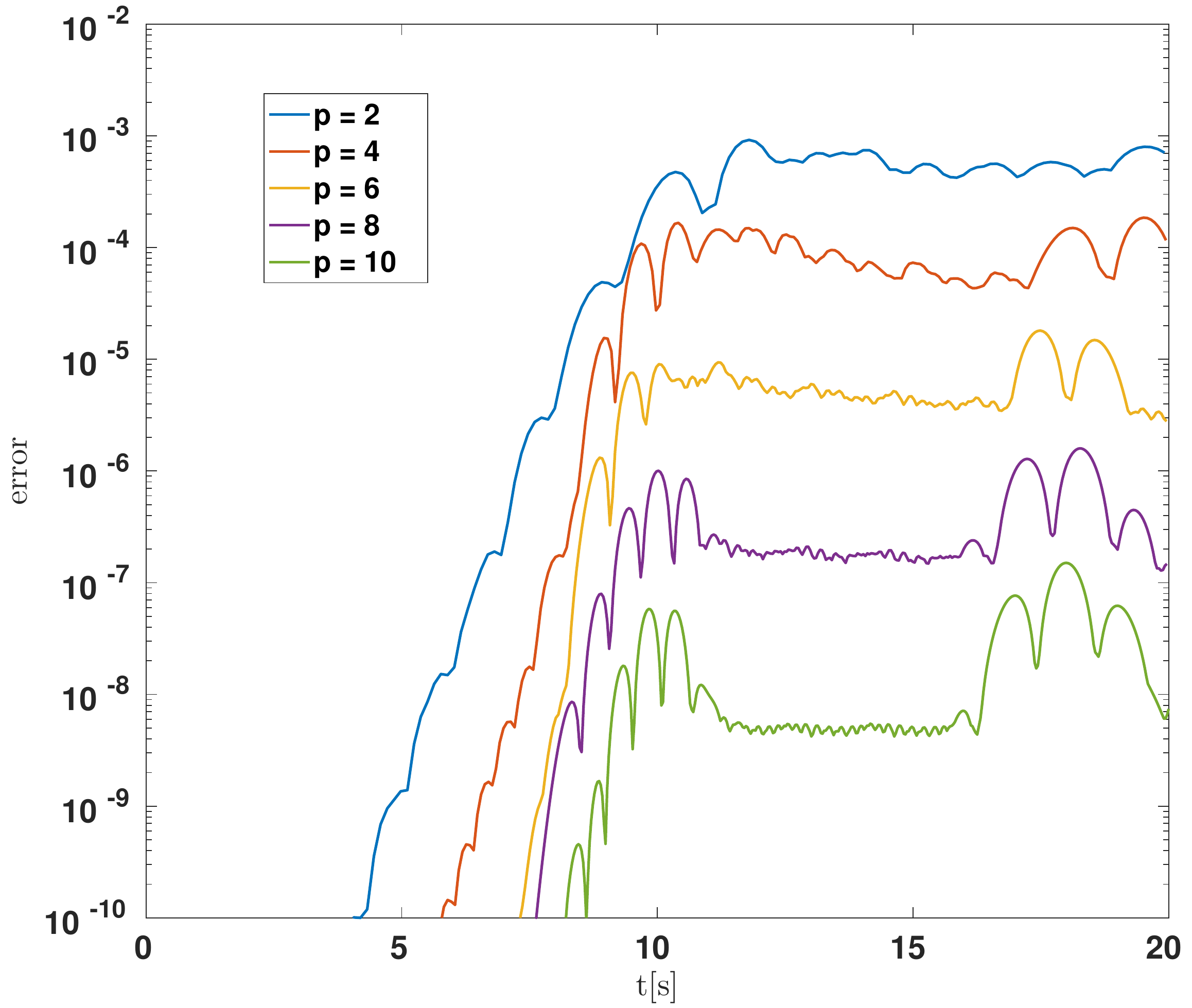}}{$p$-refinement.}%
\hspace{0.0cm}%
\stackunder[5pt]{\includegraphics[width=0.49\linewidth, height=0.45\linewidth]{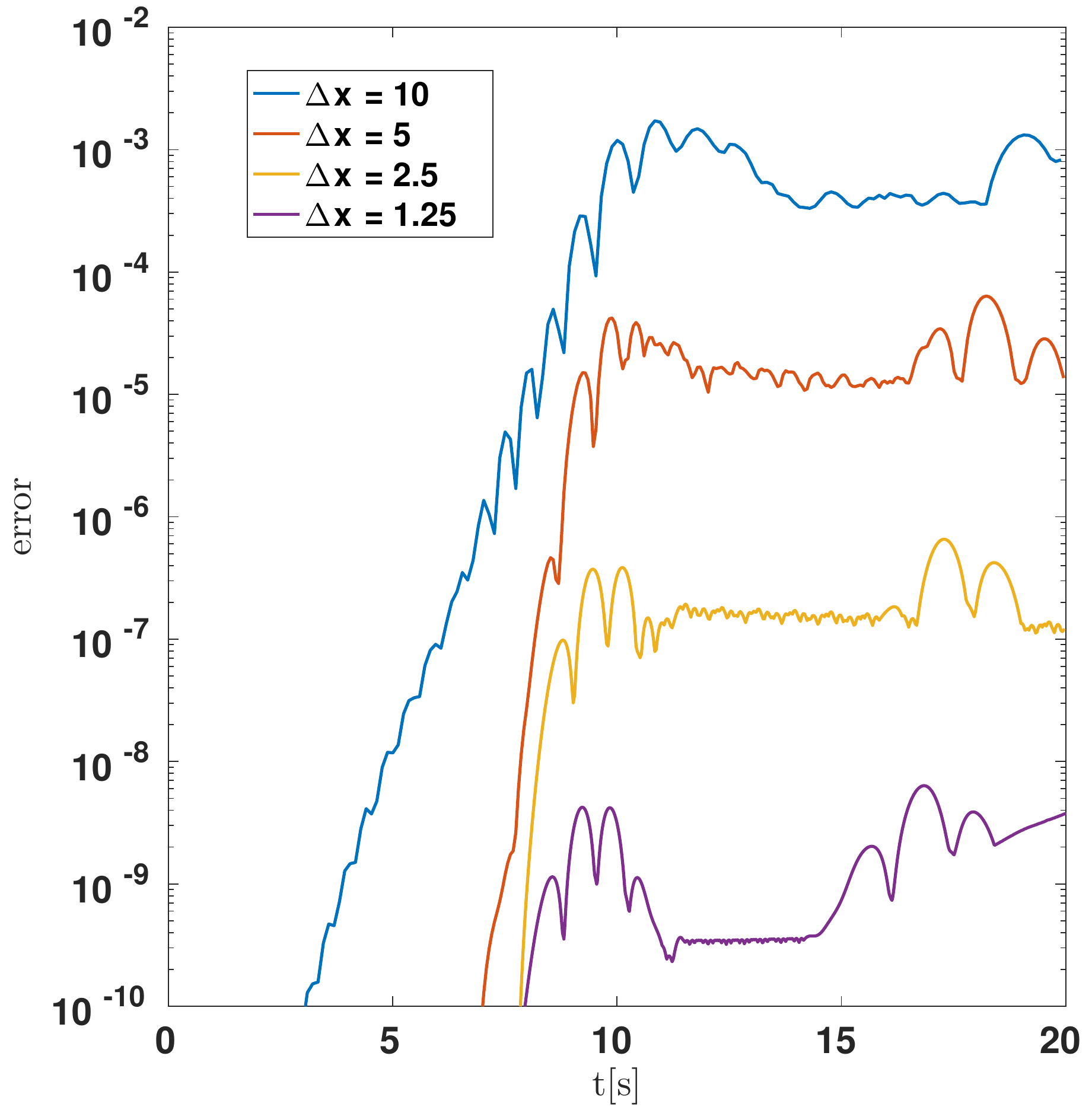}}{$h$-refinement.}%
     \end{subfigure}
    \caption{High order accurate convergence PML errors.}
    \label{fig:Time_series_error_strip}
\end{figure}

We consider $h$-refinement,  by  fixing the polynomial degree $p $ and decreasing the element size by a factor of 2, that is $\Delta{x} = \Delta{y} = 10, 5, 2.5, 1.25 $ km. The magnitude of the relative PML error is $\emph{tol} = \left[50\left(P+1\right)/\Delta{x}\right]^{-\left(P+1\right)}$.  Here  the magnitude of the relative PML error $\emph{tol}$ converges  to zero at the rate $\sim O\left(\Delta{x}^{(p+1)}\right)$. However, the total PML error will converge to zero rate $\sim O\left(\Delta{x}^{(P+1)}\ln(\Delta{x})\right)$, as the element size approaches zero, $\Delta{x} \to 0$, see \cite{DuruGabKreiss2019}.  All other discretization parameters are fixed with the PML stabilizing term $\theta_x = 1$ and the polynomial degree $P = 5$.   Numerical results are shown in Figure \ref{fig:Time_series_error_strip}. Note that PML errors converge spectrally to zero.

\begin{table}[h!]
\centering
\caption{ \textit{A vertical strip PML problem: PML errors.}}
\label{tab:SixthOrderFourthOrderError}
\begin{tabular}{c |  c | c  }
 $\Delta{x}$&$\textbf{error}$&\textbf{rate}\\
\hline
10&8.2513$\times 10^{-4}$&-\\
5&1.3602$\times 10^{-5}$&5.9228\\
2.5&1.1745$\times 10^{-7}$&6.8556\\
1.25&3.7712$\times 10^{-9}$&4.9608\\
\hline
\end{tabular}
\end{table}


%
\subsubsection{The half-space problem with PML corners}
We consider an isotropic linear elastic  half-space, $-\infty < x < \infty$ and $-\infty < y \le 0$,  with the  free-surface boundary condition at $y = 0$. The domain is unbounded at depth and in the  tangential directions. To perform numerical simulations we truncate the computational domain, by introducing the bounded domain $-50~\text{km}  < x < 50~\text{km}$ and $0 \le y \le 50$,  and surround the artificial boundaries with PML, having $ |x| \le 50 + \delta$, $0\le y \le 50 + \delta$. The setup involves two  vertical PML layers closing the left edge at $x = -50 $,  the right edge at $x = 50$,  and horizontal PML layer closing the bottom edge at $y = 50$,  of the elastic block. There are also corner regions where the horizontal and vertical layers are both active. We will now show that the PML stabilizing parameter $\theta_\xi = 1$ ensures robustness for  corners. 

To do this,  we will investigate stability and accuracy by repeating the experiments of the last subsection.
The initial conditions and the PML parameters are similar as before. The external PML boundaries closed with the classical  ABC, by setting the incoming characteristic variables at the boundaries to zero.
\begin{figure} [h!]
 \centering
\includegraphics[width=0.49\linewidth, height=0.475\linewidth]{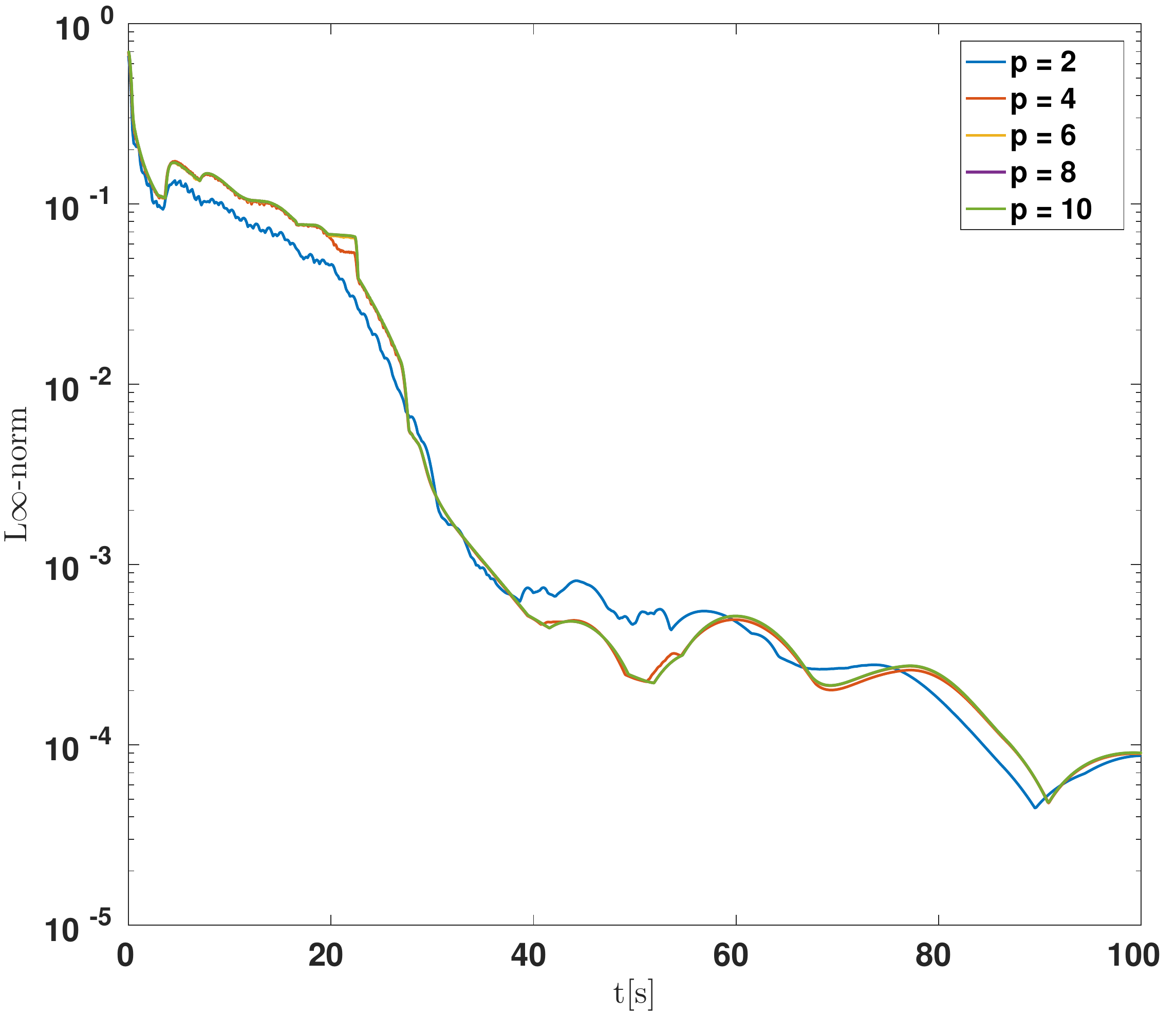}
 \caption{\textit{Time series of the L$_\infty$ norm of the velocity vector for the PML with $\theta_\xi = 1$ in a half-space.}}
 \label{fig:Time_series_HalfPlane}
\end{figure}

Figure \ref{fig:Time_series_HalfPlane} shows asymptotic numerical stability of the PML for polynomial approximations of degree $P \le 10$.
Numerical errors and convergence are shown in Figure \ref{fig:Time_series_error__HalfPlane}.
\begin{figure}[h!]
\begin{subfigure}
    \centering
\stackunder[5pt]{\includegraphics[width=0.49\linewidth, height=0.45\linewidth]{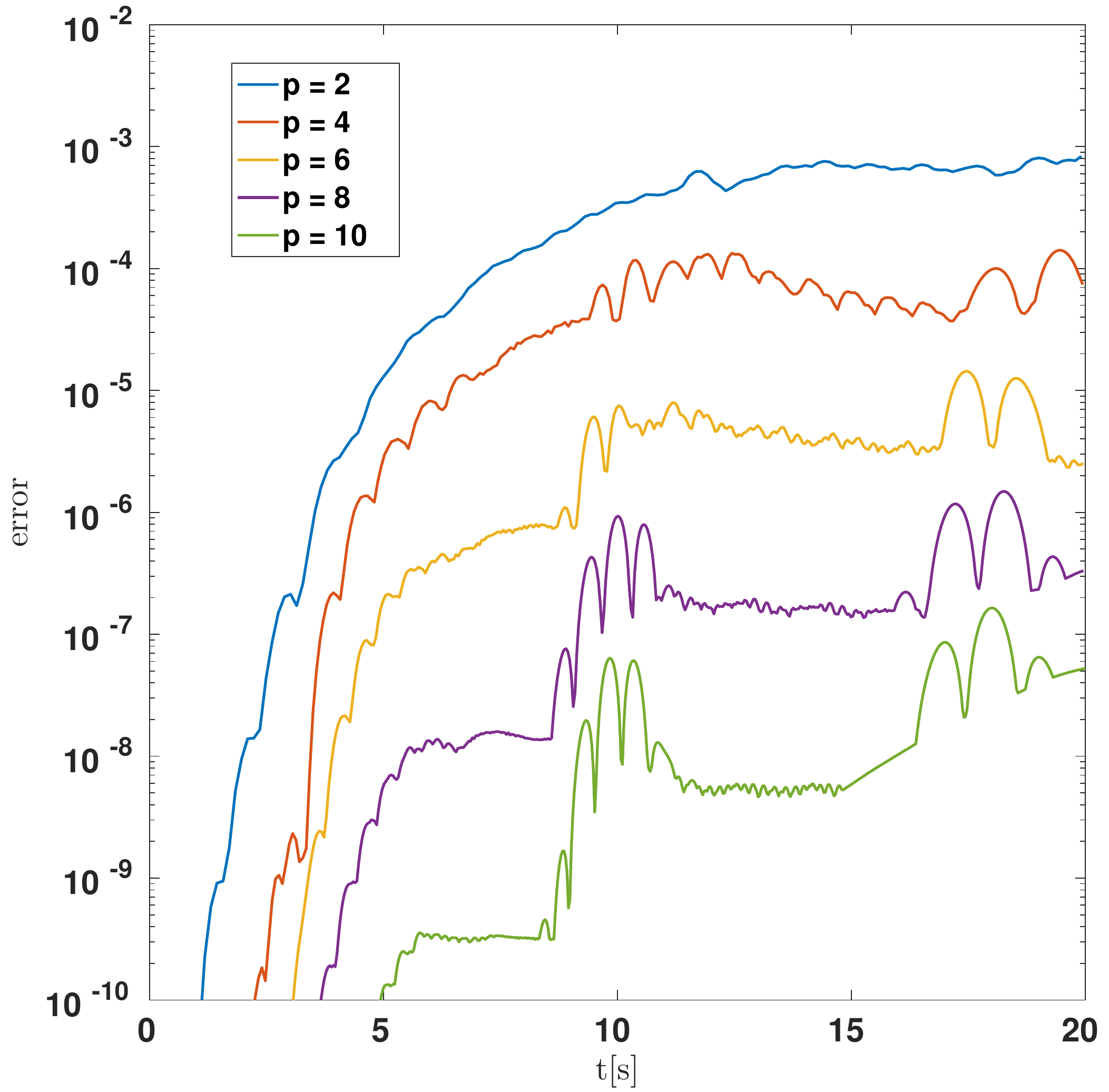}}{$p$-refinement.}%
\hspace{0.0cm}%
\stackunder[5pt]{\includegraphics[width=0.49\linewidth, height=0.45\linewidth]{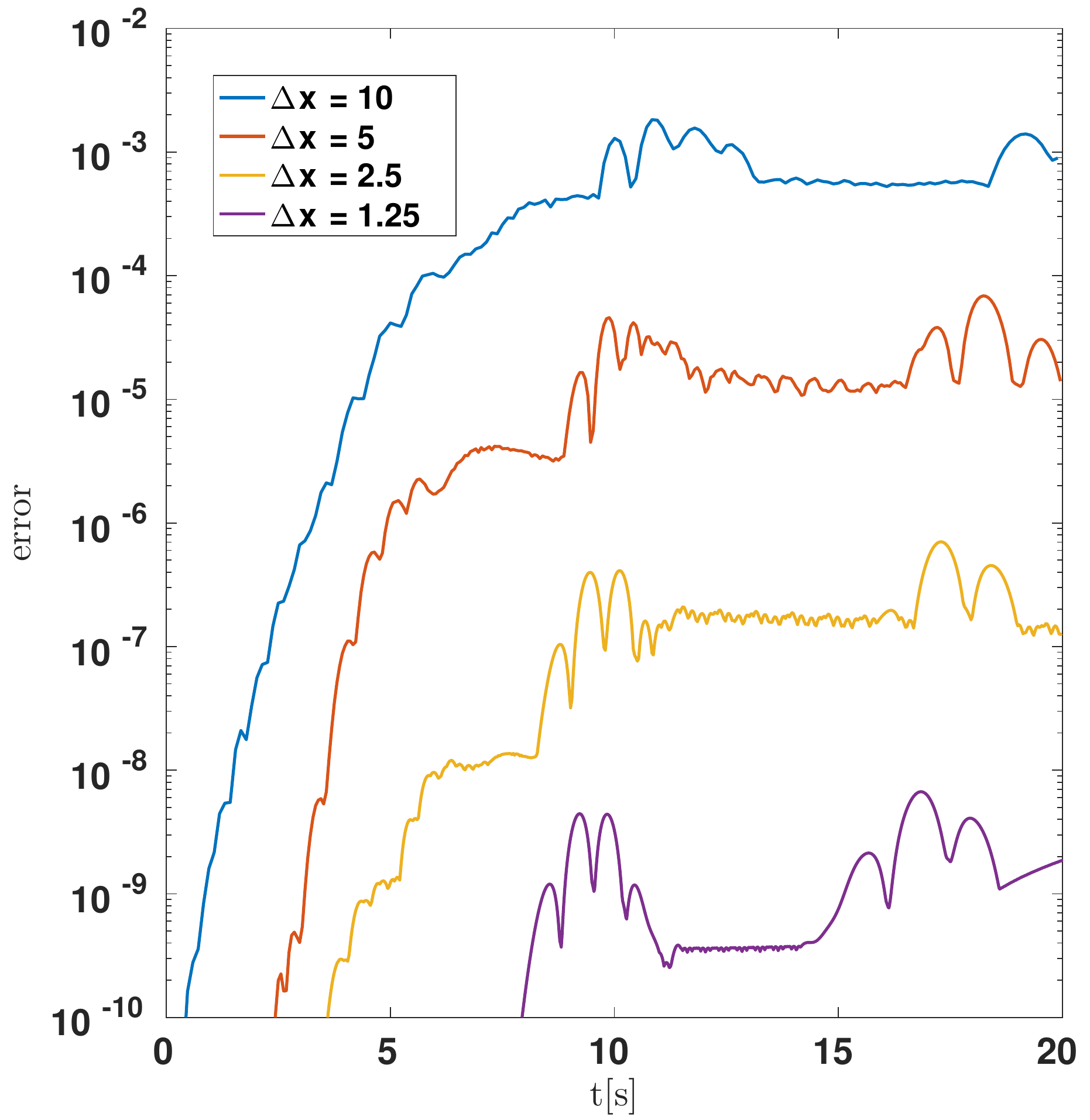}}{$h$-refinement.}%
     \end{subfigure}
    \caption{High order accurate convergence PML errors, with $\theta_\xi = 1$ in a half-space.}
    \label{fig:Time_series_error__HalfPlane}
\end{figure}

%
%
\subsection{3D elastodynamics with the PML}
We will present numerical simulations of elastic waves with the PML in 3D for 3 benchmark  problems of different interests and levels of difficulty. The results demonstrate the importance of the analysis performed in the previous sections and the effectiveness of the PML in the absorption of seismic waves of different types. The benchmark problems are i) a homogeneous whole-space (HWS) problem, ii) a homogeneous   half-space (HHS) problem and the classical layer over a homogeneous half-space (LOH1) problem. The  sketch of a typical model setup with the PML for the LOH1 benchmark is shown in Figure 7. The benchmark problems HHS and LOH1, are proposed by the Seismic wave Propagation and Imaging in Complex media: a European network (SPICE) code validation project, \cite{Kristekova_etal2006,Kristekova_etal2009}. These benchmark problems are designed to quantify and assess the accuracy of simulation codes for seismic surface and interface waves.

\begin{figure}[h!]
\centering
\includegraphics[width=.625\linewidth]{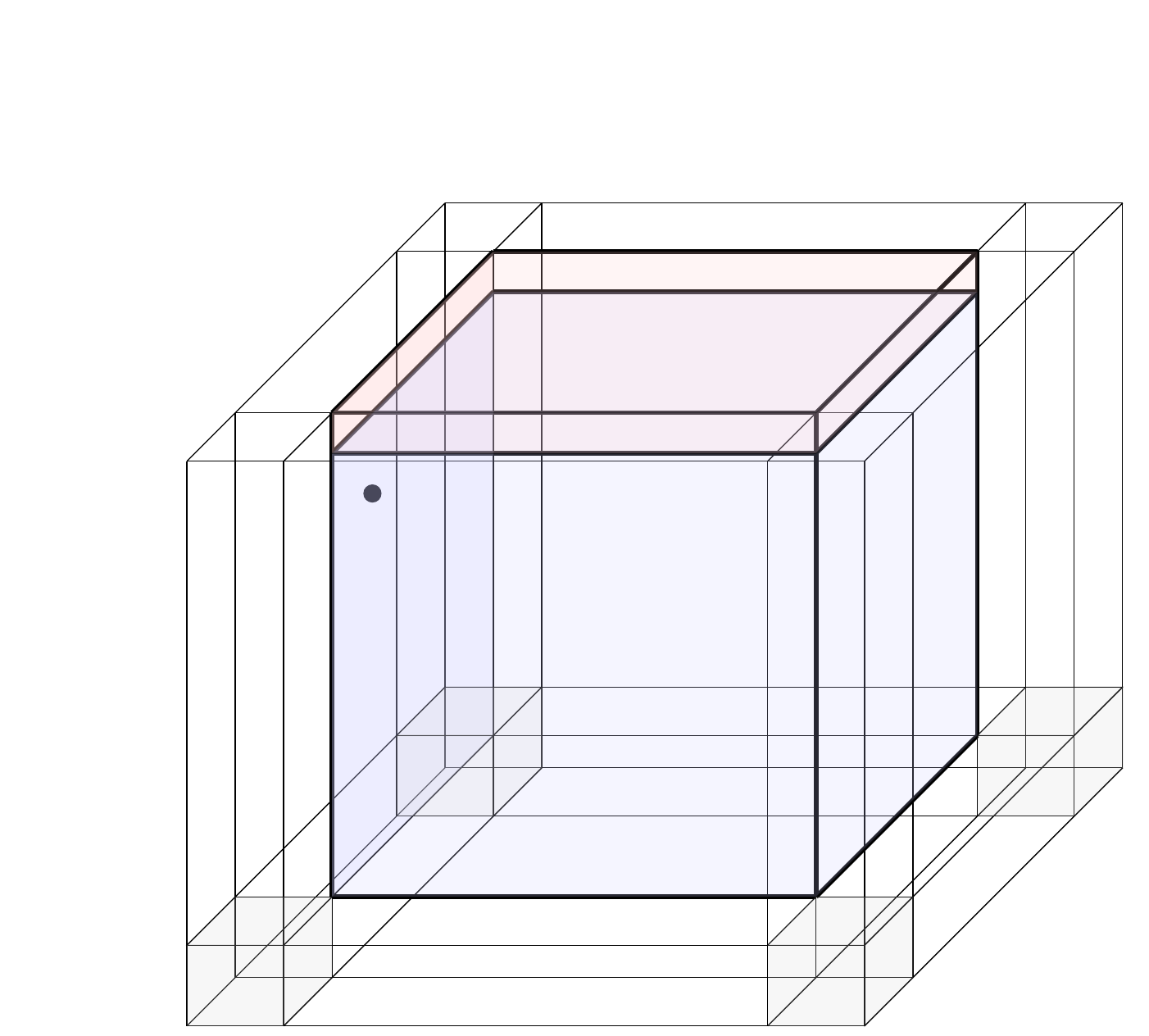}
\caption{The model setup including the PML for the LOH1 benchmark. The black dot  at the top left corner depicts the position of the source.}
\label{fig:LOH1_PML_Setup}
\end{figure}

We will consider seismic sources defined by the singular moment tensor point source 
{
\begin{align}\label{eq:pointsource}
\mathbf{f}(x,y,z,t) &=   \mathbf{M} \delta_x(x-x_0)\delta_y(y-y_0)\delta_z(z-z_0)g(t), \quad 
\mathbf{M} 
=
\begin{pmatrix}
M_{xx} & M_{xy} &M_{xz} \\
M_{xy} & M_{yy} &M_{yz} \\
M_{xz} & M_{yz} &M_{zz} 
\end{pmatrix}.
\end{align}
}
Here, $ \mathbf{M} $ is the symmetric second order moment tensor, $ \delta_{\eta}(\eta)$ are the one dimensional Dirac delta function, $(x_0, y_0, z_0)$ is the source location, and $g(t)$ is the source time function. We will consider two different source mechanisms: an explosive source modelling an underground explosion and a double couple source modelling an earthquake. 

In order to perform numerical simulations, we will consider a bounded computational domain by truncating the unbounded extent of the domain.
We discretize the domain uniformly with 25 DG elements in each spatial direction.
At an artificial boundary, to simulate unbounded domain we will include the PML in the 3 elements closest to the boundary, and close the external PML boundaries with the classical ABC, by setting the incoming characteristics to zero. 

From the analysis performed in the previous sections, and 2D numerical experiments presented in the last subsection, we know that the PML stabilization parameters $\theta_\xi = 1$ are critical for numerical stability of the PML. In all experiments we will  use $\theta_\xi = 1$  and set the PML relative error $\mathrm{tol} = 0.1 \%$, which is rather a modest error tolerance.
\subsubsection{The homogeneous whole-space problem}
Here, we consider solutions of  a 3D homogeneous isotropic elastic solid in a  whole space, and an explosive point source, 
with $\mathbf{M} = \mathrm{M}_0\mathbf{I}$, where $\mathbf{I}$ is the $3$-by-$3$ identity matrix, that is    $\mathrm{M}_{xx} = \mathrm{M}_{yy} = \mathrm{M}_{zz} =\mathrm{M}_0$, and  $\mathrm{M}_0 = 10^{18} ~\ \text{Nm}$ is the moment magnitude. 
The source time function is given by the Gaussian
{
\begin{align}\label{eq:pointsource_compressive}
  g(t) = \frac{1}{\sigma_0\sqrt{2\pi}}e^{-\frac{(t-t_0)^2}{2\sigma_0^2}}, \quad  \sigma_0 = 0.1149~\text{s}, \quad t_0 = 0.7~\text{s}.
\end{align}
}
The density and wave speeds of the medium are 
\begin{align}\label{eq:homo_data}
\rho = 2670~\ \text{kg/m}^3, \quad c_p = 6000~\ \text{m/s}, \quad c_s = 3464~\ \text{m/s}.
\end{align}
The whole space problem has an analytical solution, see for example \cite{Achenbach1973,Petersson_etal2016}.

\begin{figure}[h!]
\begin{subfigure}
    \centering
\stackunder[5pt]{\includegraphics[width=0.475\textwidth]{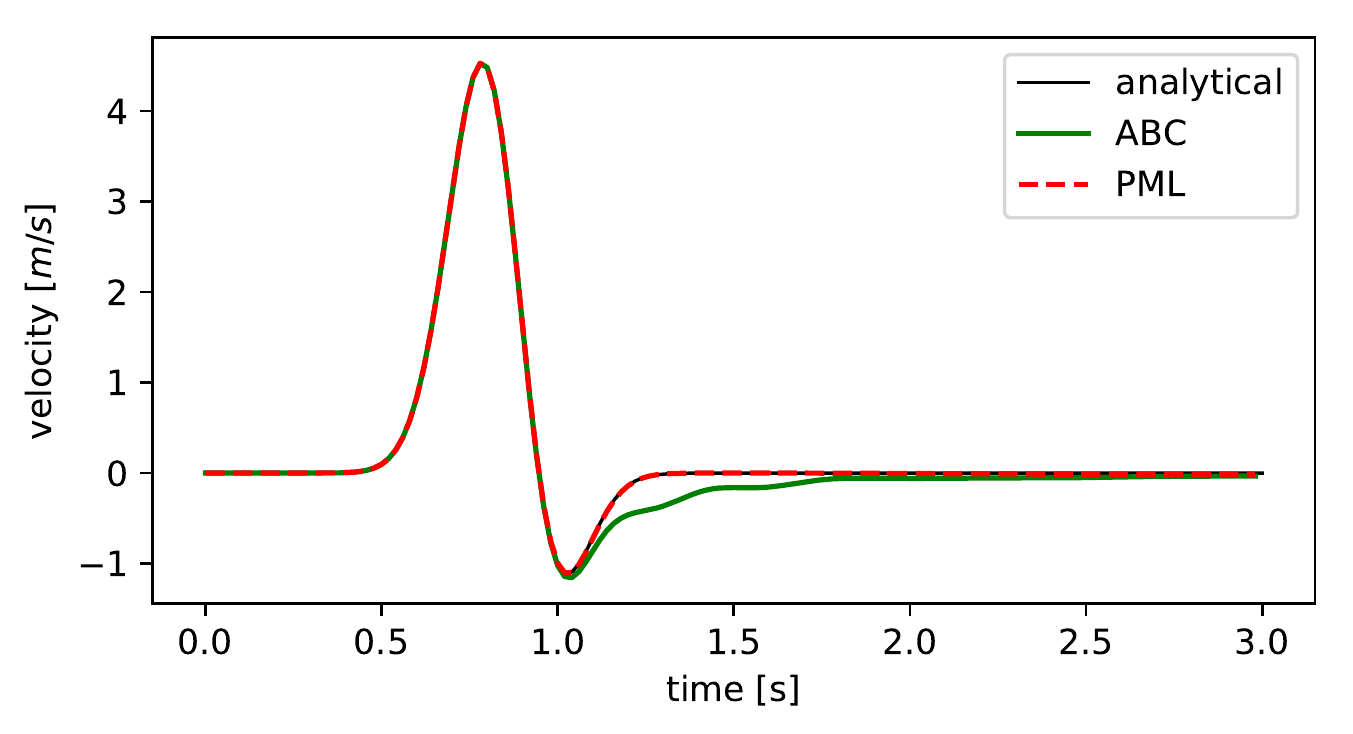}}{Receiver 1:   1 km from the source.}%
\hspace{0.0cm}%
\stackunder[5pt]{\includegraphics[width=0.5\textwidth]{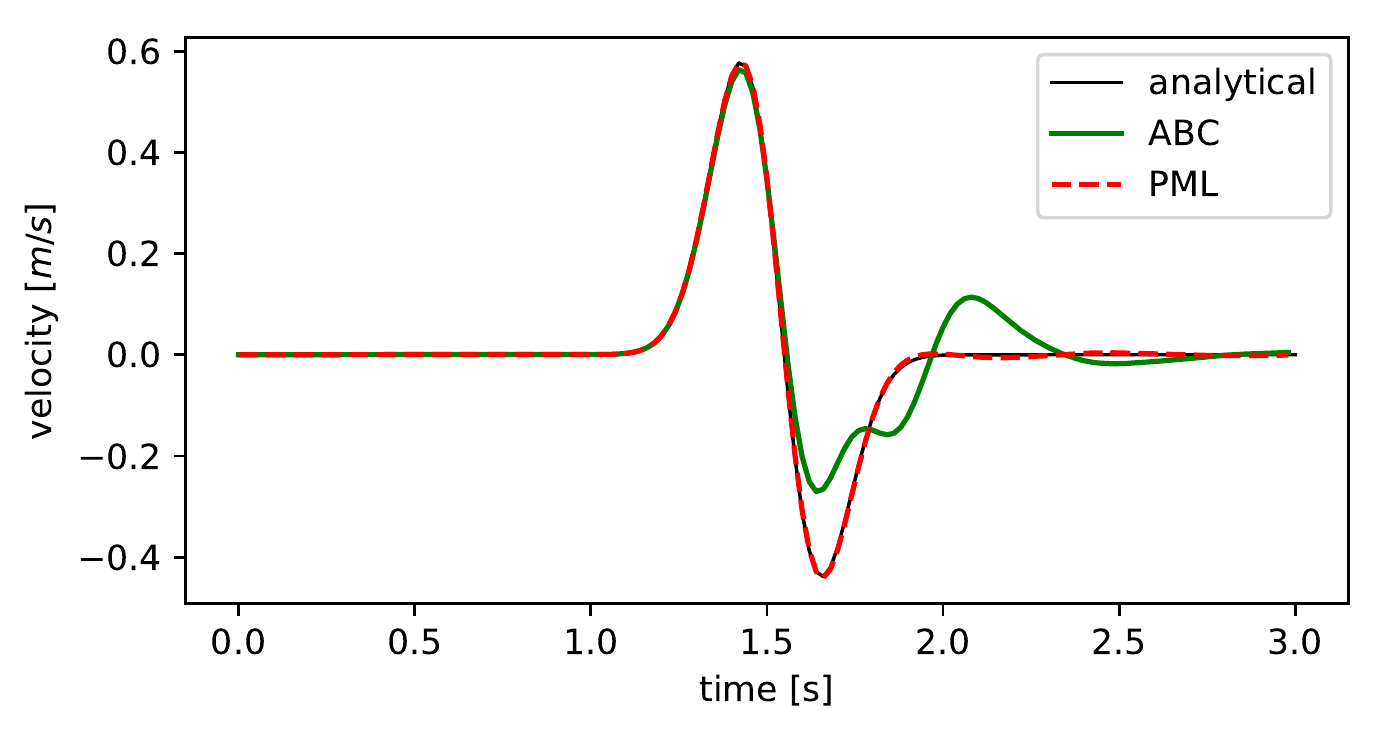}}{Receiver 2:   5 km from the source.}%
     \end{subfigure}
    \caption{The whole space problem. Comparing the ABC and PML solutions with the analytical solution.}
    \label{fig:wholespace}
\end{figure}

The computational domain is the cube $(x,y,z) = [0, 10~\ \text{km}]\times[0, 10~\ \text{km}]\times[0, 10~\ \text{km}]$, and the source is located at $(x_0, y_0, z_0) = (3.4~\ \text{km}, 5~\ \text{km}, 5~\ \text{km})$.  The domain is surrounded by the PML to absorb outgoing waves.  We consider degree $N =5$ polynomial approximation, and run the simulation until the final time $t = 3$ s. We place a receiver, Receiver 1 at $(x_r, y_r, z_r) = (4.4~\ \text{km}, 5~\ \text{km}, 5~\ \text{km})$,  1 km from the source, and another receiver, Receiver 2 at $(x_r, y_r, z_r) = (8.4~\ \text{km}, 5~\ \text{km}, 5~\ \text{km})$,  $5~\ \text{km}$ from the source, where the solution is sampled. Note that while Receiver 1 is closer to the source, Receiver 2 is much closer to the artificial boundary, at $x = 10~$km.

The solutions are displayed in Figure \ref{fig:wholespace}. Note that initially the ABC and PML solutions match the analytical solution very well. However, at later times the ABC solution  is polluted by numerical reflections arriving from the artificial boundaries. The PML solution remains accurate and matches the analytical solution excellently for the entire simulation duration.


\subsubsection{The homogeneous half-space (HHS) problem}\label{sec:hhs1}
We consider  a homogenous isotropic elastic half-space with a planar free-surface, at $x = 0$, where tractions vanish.
We place a double couple moment tensor point source at depth, $x = 0.693~\ $km below the free surface.
The source time function is define by
\begin{align}\label{eq:loh1:pointsource}
 g(t) = \frac{t}{T^{2}}e^{-{t}/{T}},  \quad T = 0.1 ~\ \text{s},
\end{align}
and the moment tensor $\mathbf{M}$, is zero almost everywhere except for the shear component $M_{yz}=M_{zy}=M_0$, and $M_0 =10^{18}$~Nm is the moment magnitude.
The constant  material parameters are given by
\begin{align}\label{eq:hhs_data}
\rho = 2700~\ \text{kg/m}^3, \quad c_p = 6000~\ \text{m/s}, \quad c_s = 3464~\ \text{m/s}.
\end{align}
The HHS problem has an analytical solution, see \cite{Kristekova_etal2006,Kristekova_etal2009}.
Note that the domain is unbounded at depth and in the tangential directions.

In order to perform numerical simulations we consider the bounded computational cube $(x,y,z) = [0, 16.333~\text{km}]\times[-2.287~\text{km}, 14.046~\text{km}]\times[-2.287~\text{km}, 14.046~\text{km}]$, and the source is located at $(x_0, y_0, z_0) = (0.693~\text{km}, 0, 0)$.  The truncated boundaries of the domain were surrounding by the PML to absorb outgoing waves. The setup is analogous to Figure \ref{fig:LOH1_PML_Setup}, with the only difference being the constant medium parameters used here.  Note that the free-surface boundary extends into the PML. We consider degree $N =5$ polynomial approximation, and run the simulation until the final time $t = 5$ s. 
We place 9 receivers  on the free-surface, at $x = 0$, where the solutions are sampled. Table \ref{tbl:receivers_hhs1} shows the positions of the receivers relative to the epicenter.
\begin{table}[h!]
  \caption{Receiver Positions of the homogenous half-space and layer over half-space problem relative to the epicenter.}
  \centering{
    \begin{tabular}{|c|c|c|c|c|c|c|c|c|c|}
      \hline
      Receiver &1&2&3&4&5&6&7&8&9  \\ \hline
      y[km]& 0     & 0     & 0       & 0.490  & 3.919 & 7.348 & 0.577 & 4.612 & 8.647 \\\hline
      z[km]& 0.693 & 5.542 & 10.392  & 0.490  & 3.919 & 7.348 & 0.384 & 3.075 & 5.764 \\ \hline
    \end{tabular}
  }
  \label{tbl:receivers_hhs1}
\end{table}

\begin{figure}[h!]
\begin{subfigure}
    \centering
\stackunder[5pt]{\includegraphics[width=0.245\textwidth]{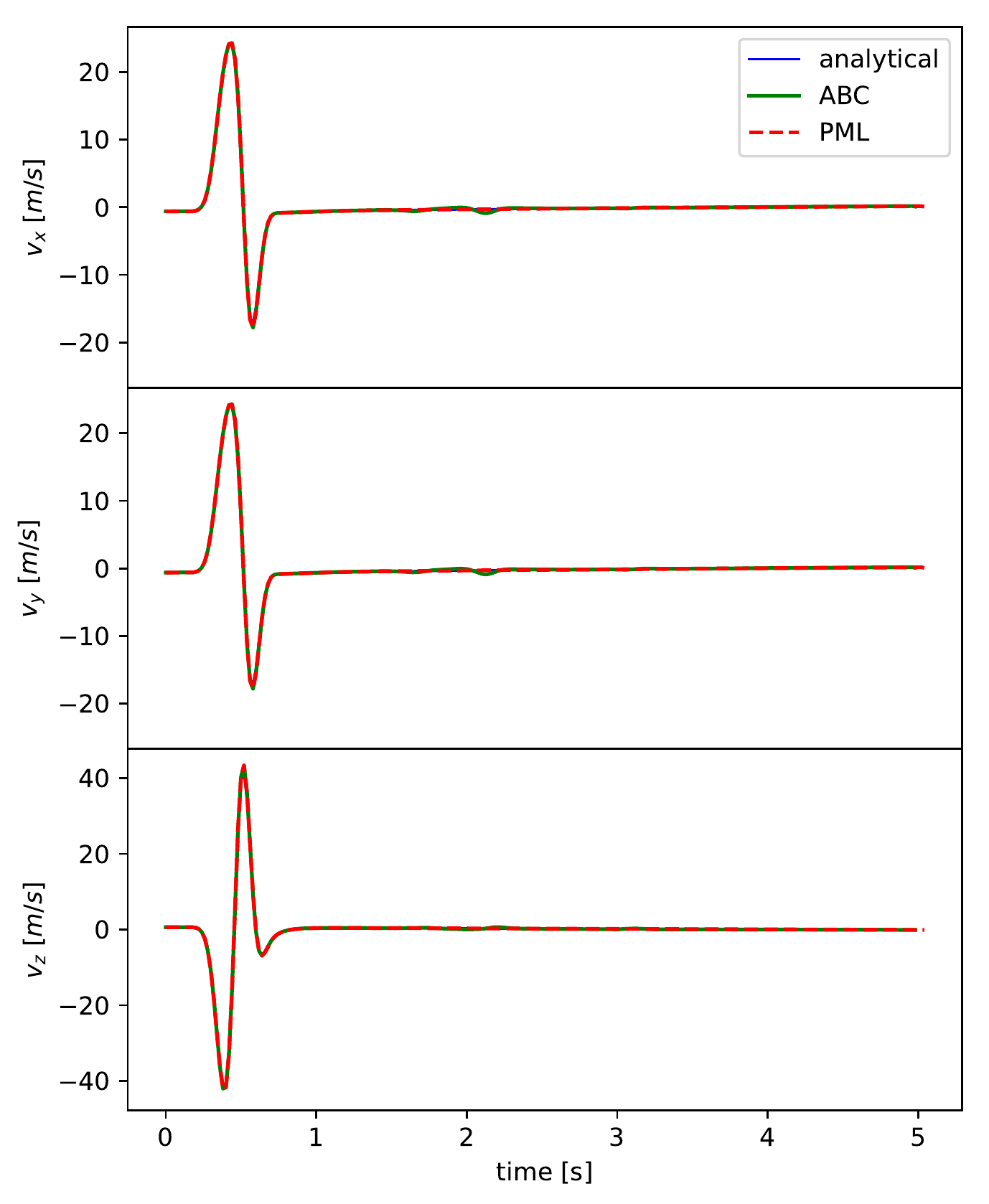}}{Receiver 4}%
\hspace{0.0cm}%
\stackunder[5pt]{\includegraphics[width=0.245\textwidth]{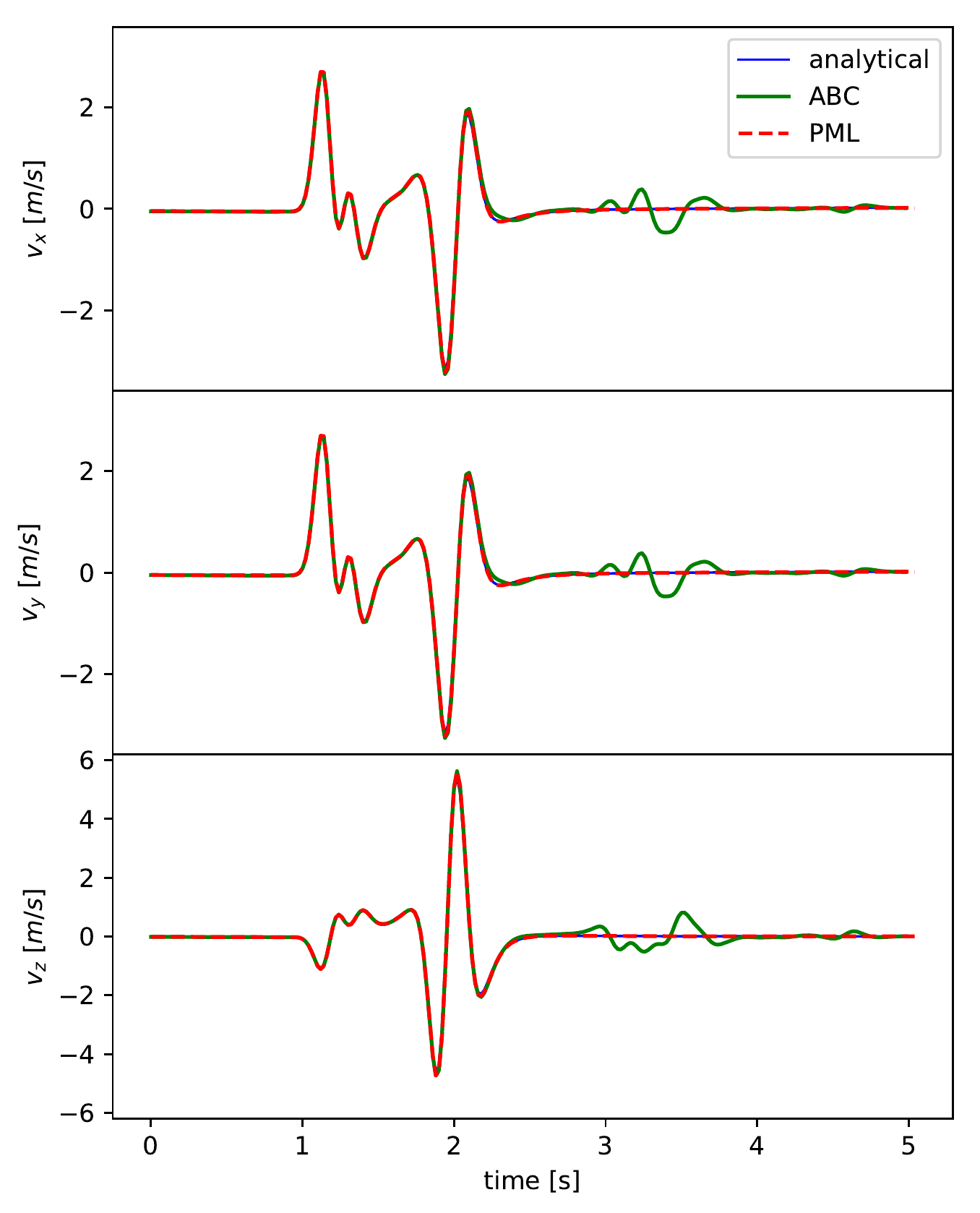}}{Receiver 5}%
\hspace{0.0cm}%
\stackunder[5pt]{\includegraphics[width=0.245\textwidth]{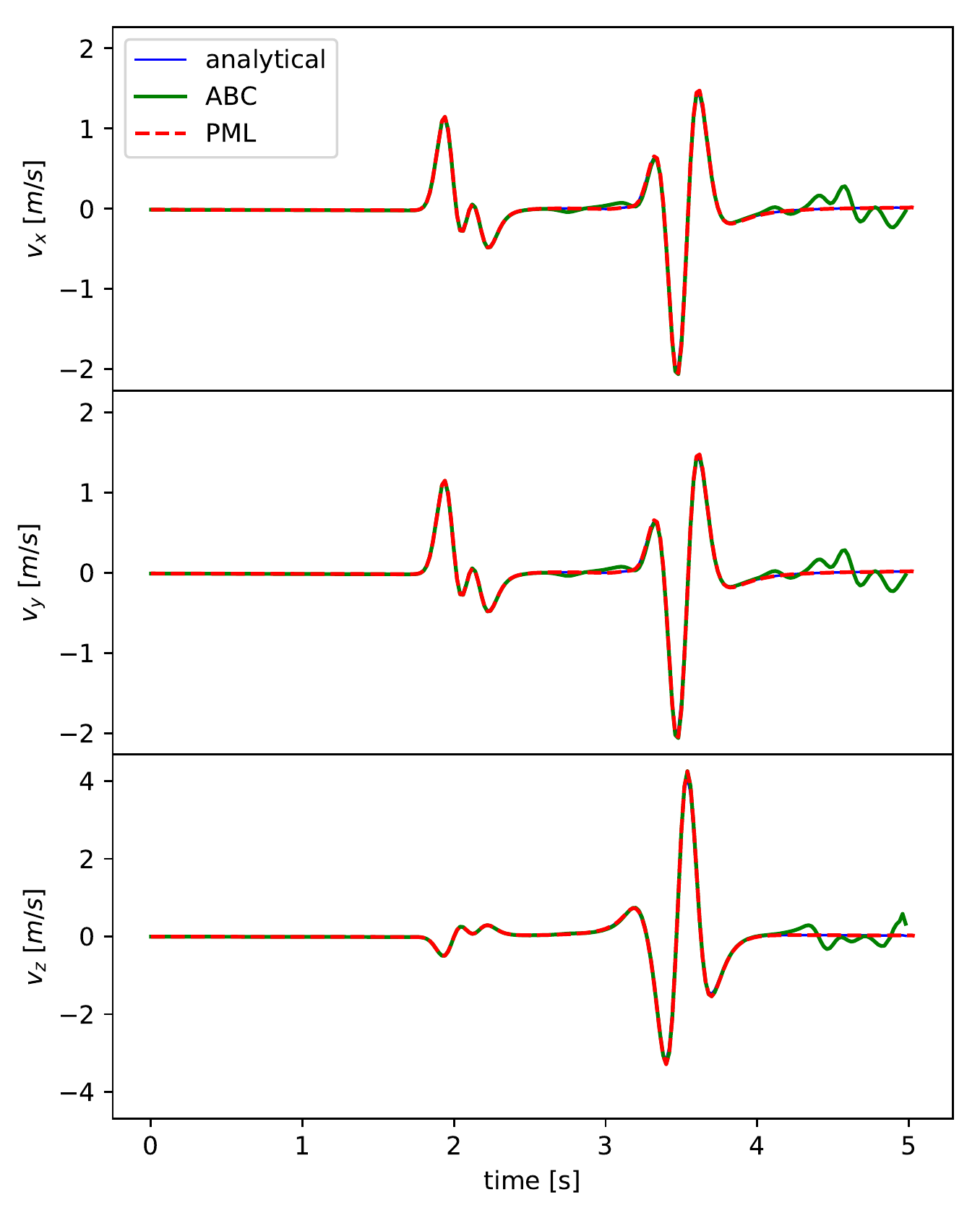}}{Receiver 6}%
\hspace{0.0cm}%
\stackunder[5pt]{\includegraphics[width=0.245\textwidth]{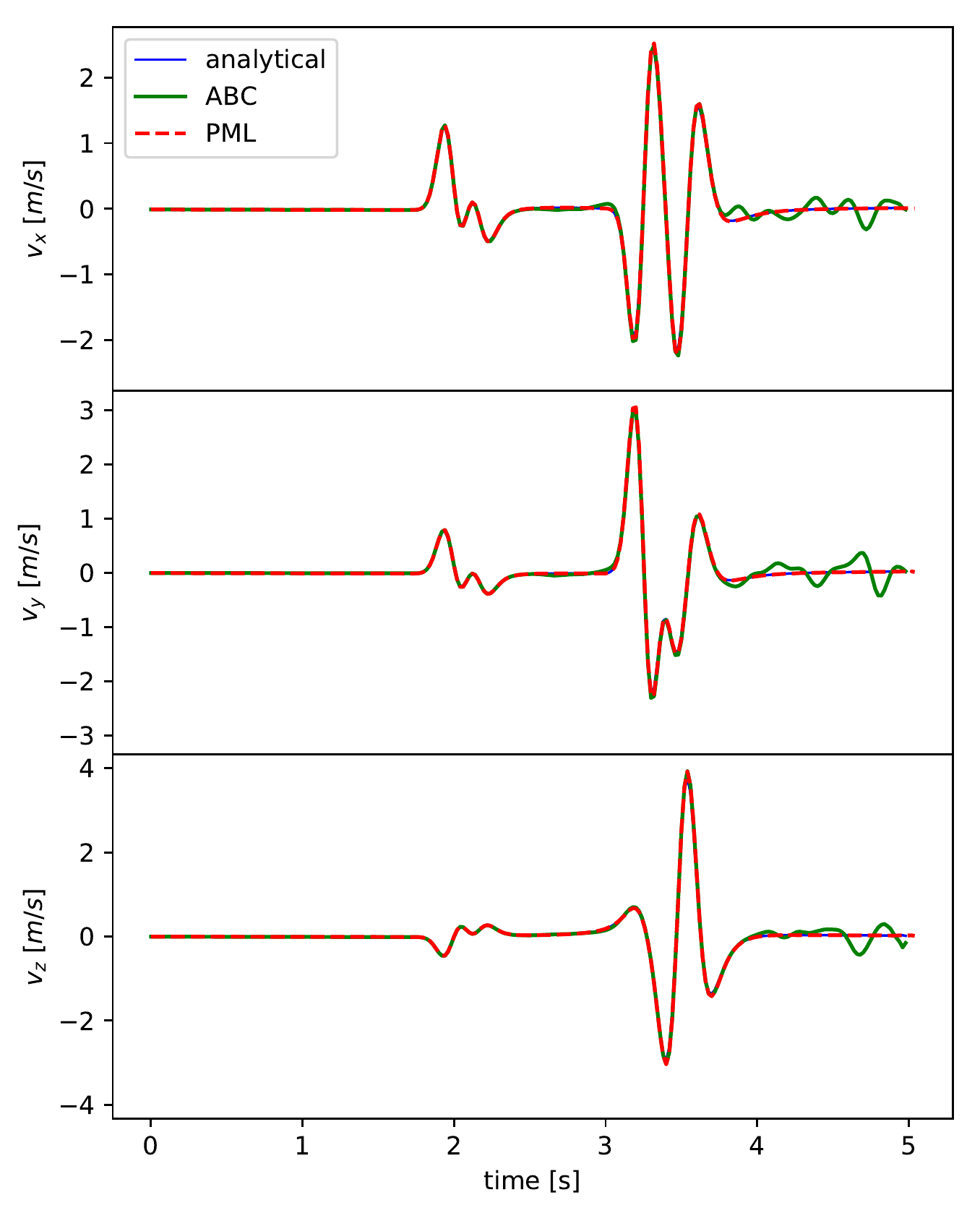}}{Receiver 9}%
     \end{subfigure}
    \caption{The homogeneous half space problem}
    \label{fig:halfspace}
\end{figure}

The solutions are displayed in Figure \ref{fig:halfspace}, for Receivers 4, 5, 6 and 9. The seismograms for the remaining Receivers are placed in Appendix \ref{sec:other_stations}.  Note that initially the ABC and PML solutions match the analytical solution very well. However, at later times the ABC solution  is polluted by numerical reflections arriving from the artificial boundaries. These artificial reflections can never diminish with $p-$ or $h-$refinement.
The PML solution remains accurate and matches the analytical solution excellently for the entire simulation duration. 

\subsubsection{Layer over a homogeneous half-space (LOH1) problem}
We will now consider the LOH1 benchmark problem, which has an analytical solution, see [29,30]. 
The LOH1 setup is  similar to the HHS problem. The only differences are the position of the 
source, discontinuous medium parameters and the duration of the simulation. The LOH1 problem 
consists of a planar free surface and an internal planar interface between a thin homogeneous 
soft layer and hard half-space. The material properties in the medium are given by 
%
\begin{align}\label{eq:hhs_data}
  x < 1\textrm{~km: }\rho = 2600~\ \text{kg/m}^3, \quad c_p = 4000~\ \text{m/s}, \quad c_s = 2000~\ \text{m/s}, \\
  x \ge 1\textrm{~km: } \rho = 2700~\ \text{kg/m}^3, \quad c_p = 6000~\ \text{m/s}, \quad c_s = 3464~\ \text{m/s}.
\end{align}
We place a double couple moment tensor point source at depth, $x = 2~$km below the free surface, 
and the source time function is  given by \eqref{eq:loh1:pointsource}. 
Note that the domain is unbounded at depth and in the tangential direction, and that the thin 
soft layer acts as a waveguide. Thus, in addition to body waves, surface waves and interface 
waves, guided waves are also supported.
%

As above, to perform numerical simulations we consider the bounded computational cube $(x,y,z) = [0, 16.333~\text{km}]\times[-2.287~\text{km}, 14.046~\text{km}]\times[-2.287~\text{km}, 14.046~\text{km}]$, and the source is located at $(x_0, y_0, z_0) = (2~\text{km}, 0, 0)$.  The truncated boundaries of the domain were surrounding by the PML to absorb outgoing waves, see  Figure \ref{fig:LOH1_PML_Setup}. 
Note 
that the free-surface boundary, at $x =0$, and the material discontinuity, at $x =1~$km, extend into the PML. We consider degrees $N =3, 5$ polynomial approximations, and run the simulation until the final time $t = 9~$s. 

\begin{figure}[h!]
\begin{subfigure}
    \centering
\stackunder[5pt]{\includegraphics[width=0.245\textwidth]{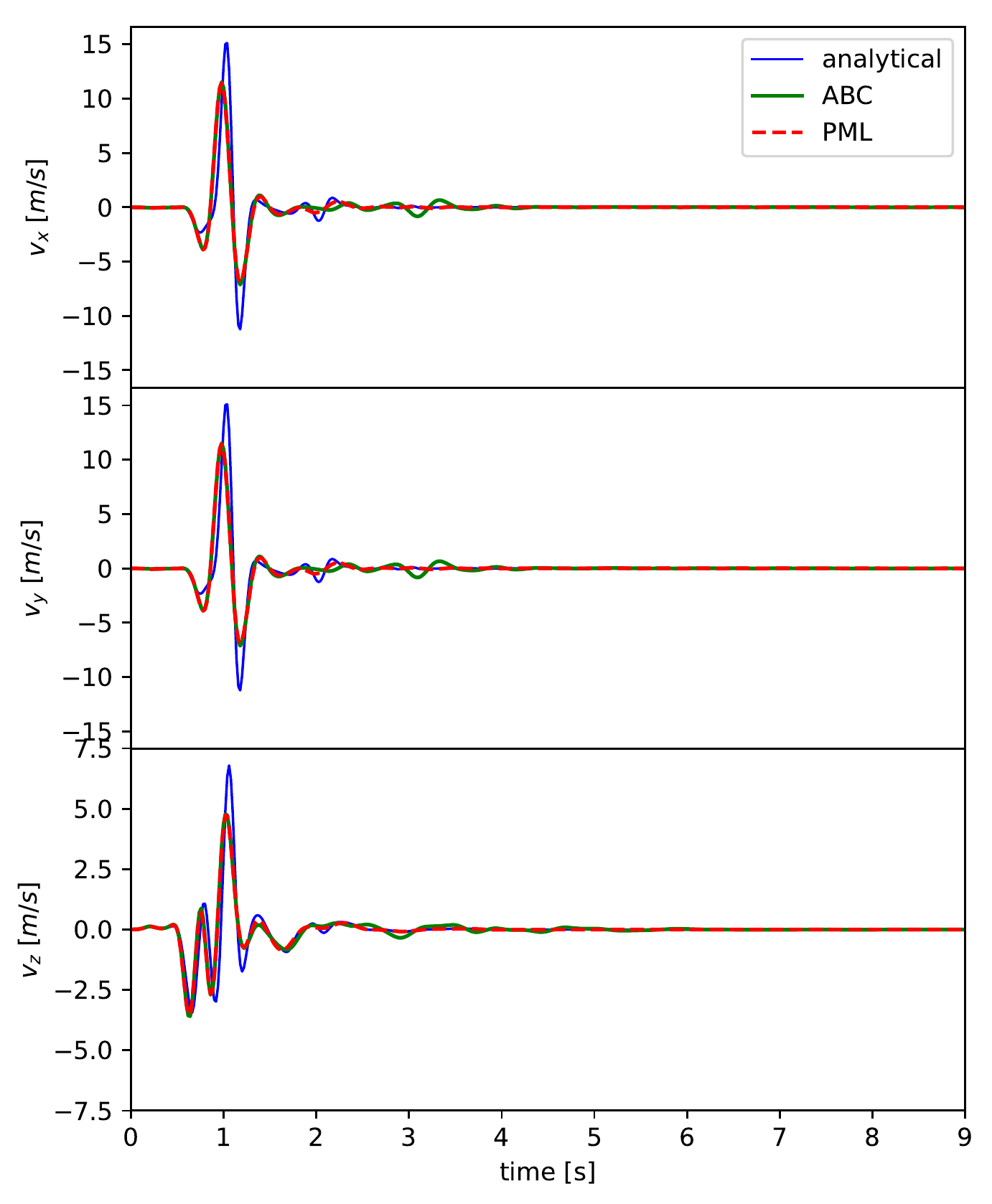}}{Receiver 4}%
\hspace{0.0cm}%
\stackunder[5pt]{\includegraphics[width=0.245\textwidth]{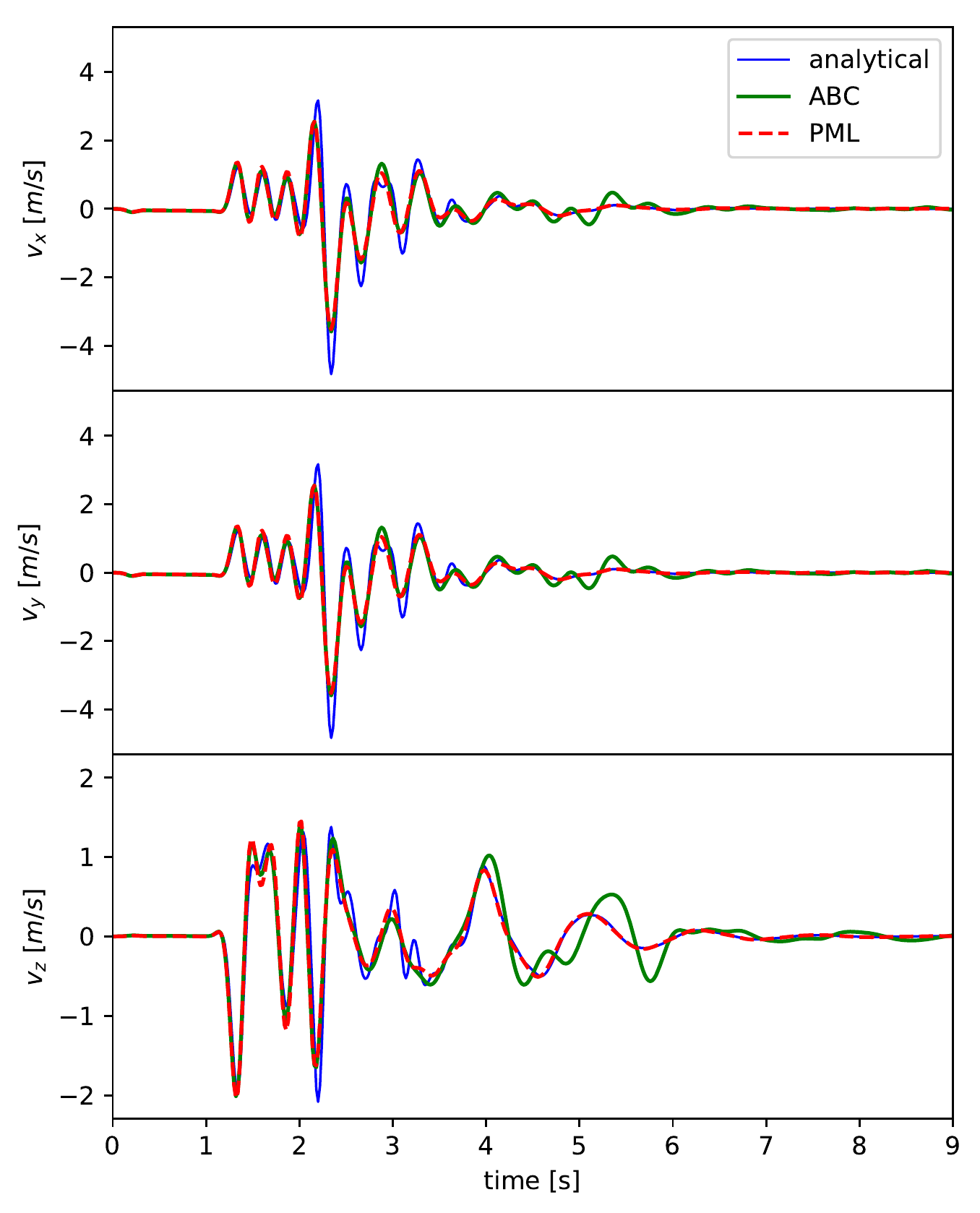}}{Receiver 5}%
\hspace{0.0cm}%
\stackunder[5pt]{\includegraphics[width=0.245\textwidth]{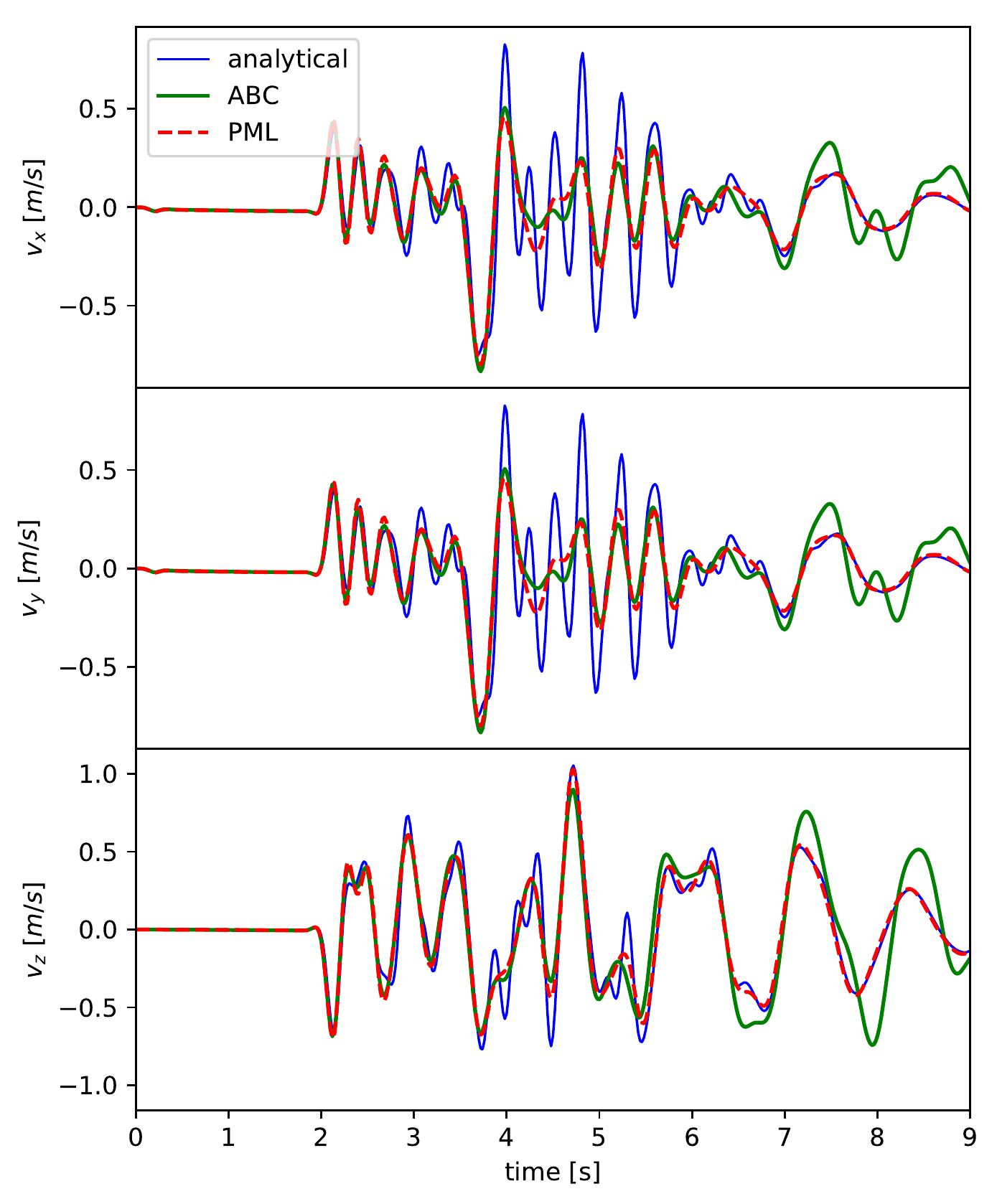}}{Receiver 6}%
\hspace{0.0cm}%
\stackunder[5pt]{\includegraphics[width=0.245\textwidth]{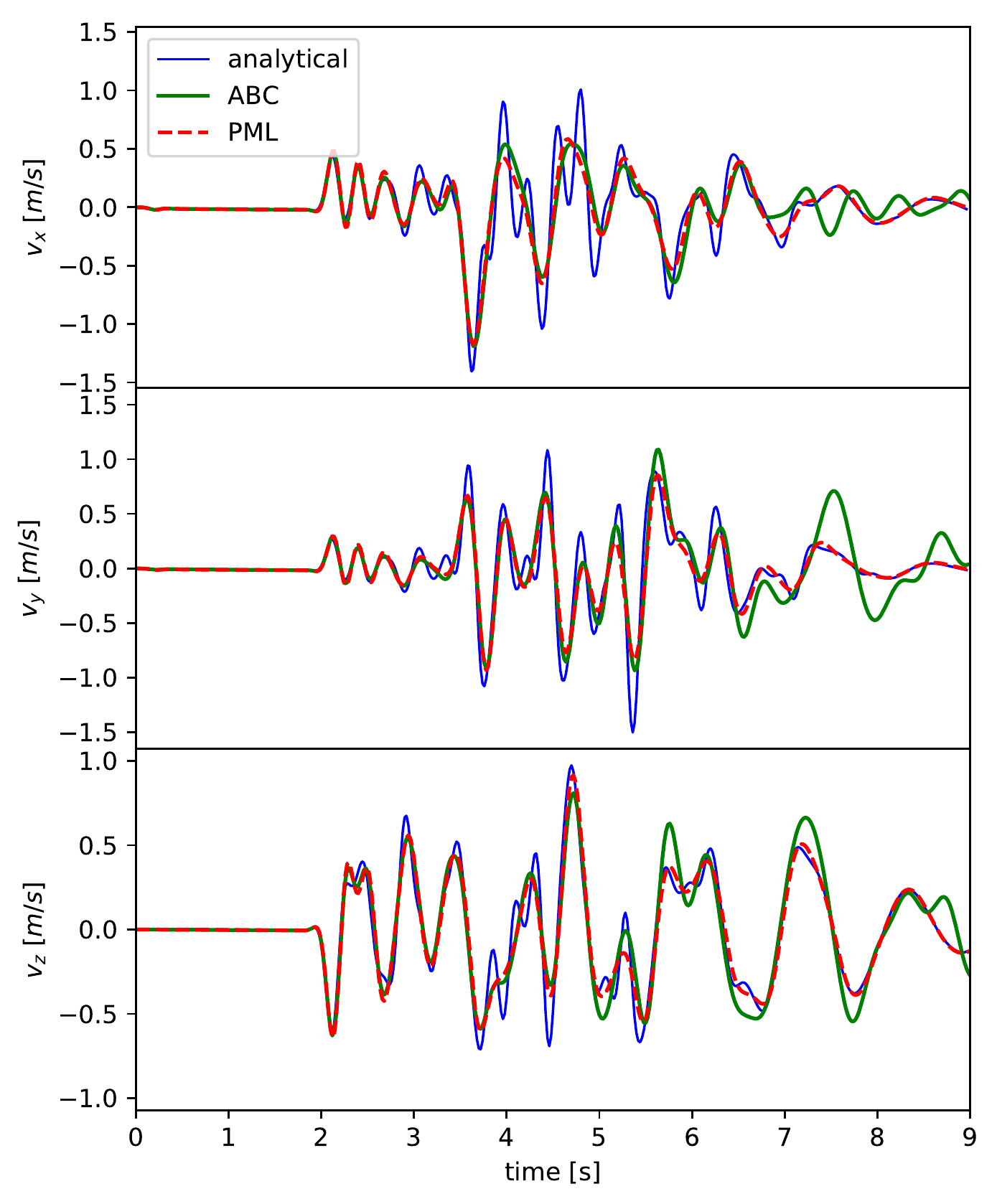}}{Receiver 9}%
     \end{subfigure}
    \caption{The LOH1 benchmark problem with degree $N = 3$ polynomial approximation}
    \label{fig:loh1_o3}
\end{figure}

\begin{figure}[h!]
\begin{subfigure}
    \centering
\stackunder[5pt]{\includegraphics[width=0.245\textwidth]{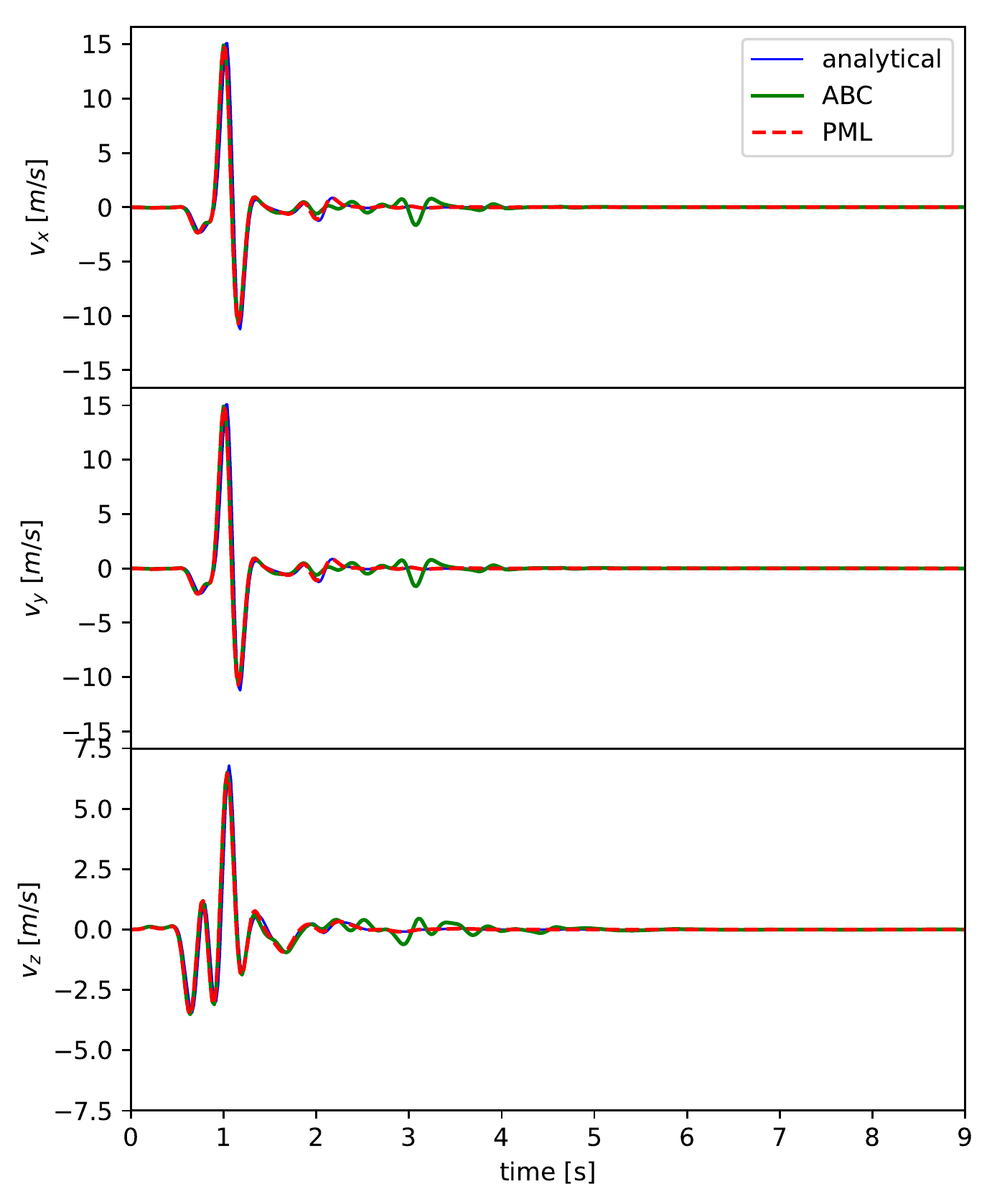}}{Receiver 4}%
\hspace{0.0cm}%
\stackunder[5pt]{\includegraphics[width=0.245\textwidth]{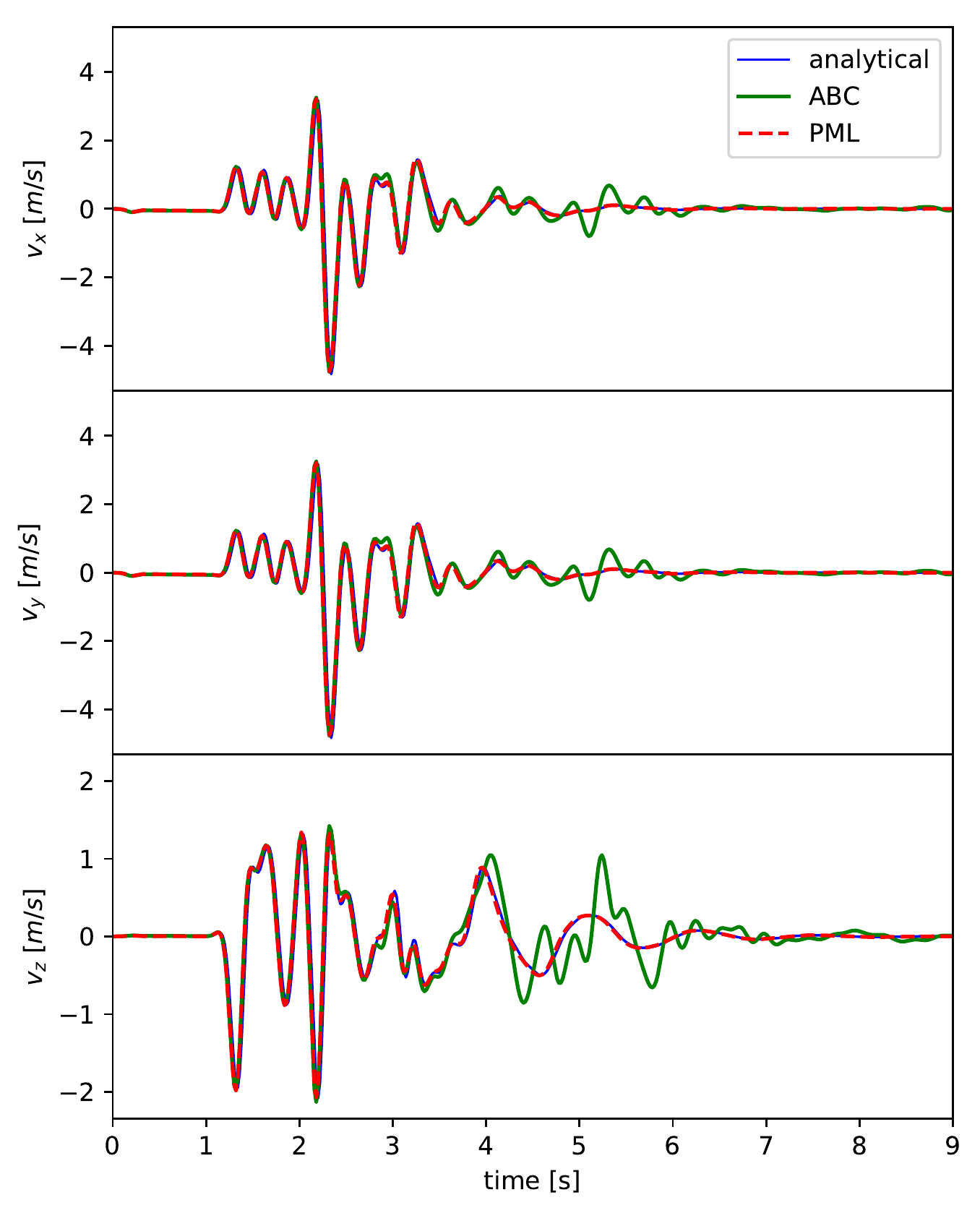}}{Receiver 5}%
\hspace{0.0cm}%
\stackunder[5pt]{\includegraphics[width=0.245\textwidth]{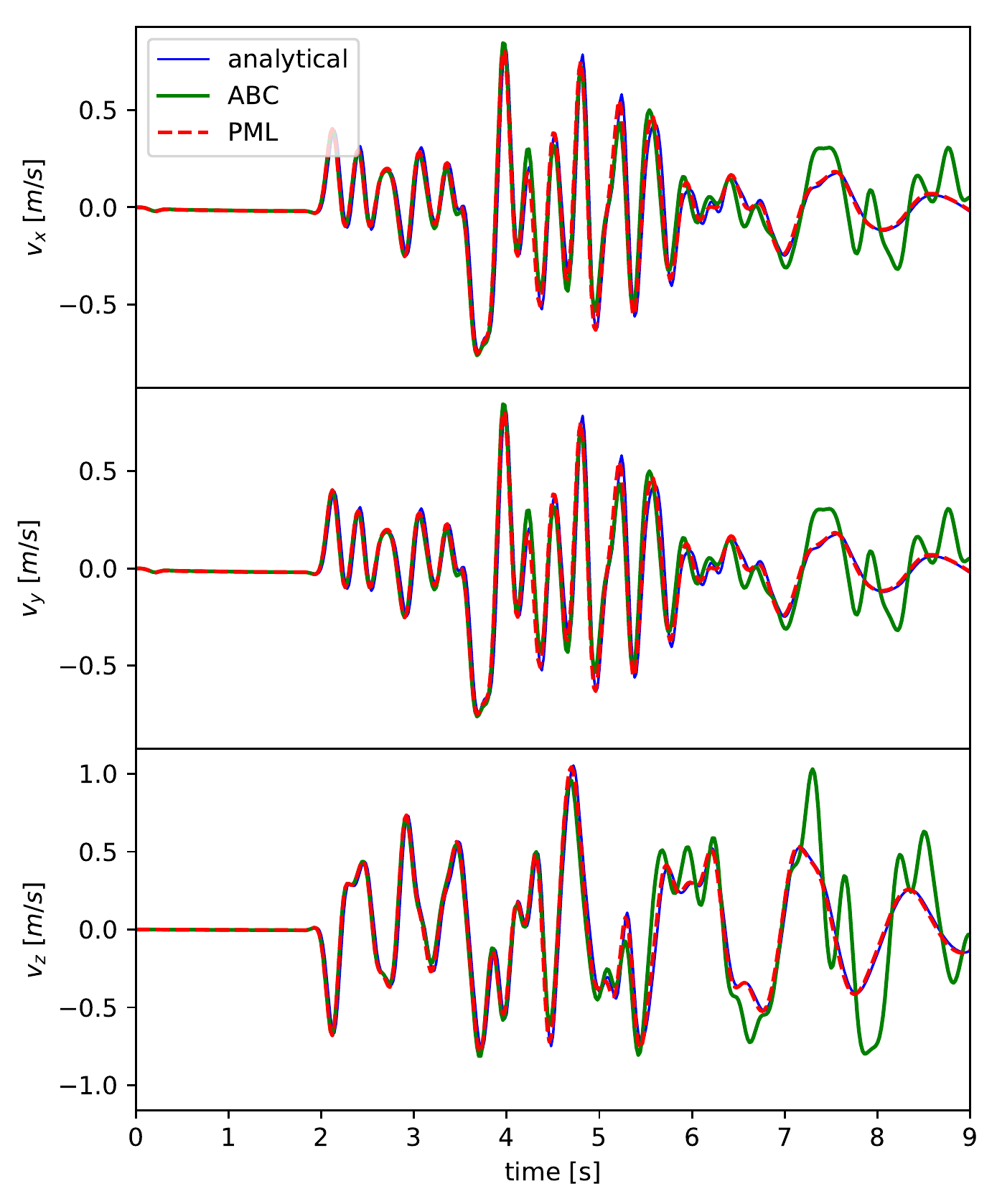}}{Receiver 6}%
\hspace{0.0cm}%
\stackunder[5pt]{\includegraphics[width=0.245\textwidth]{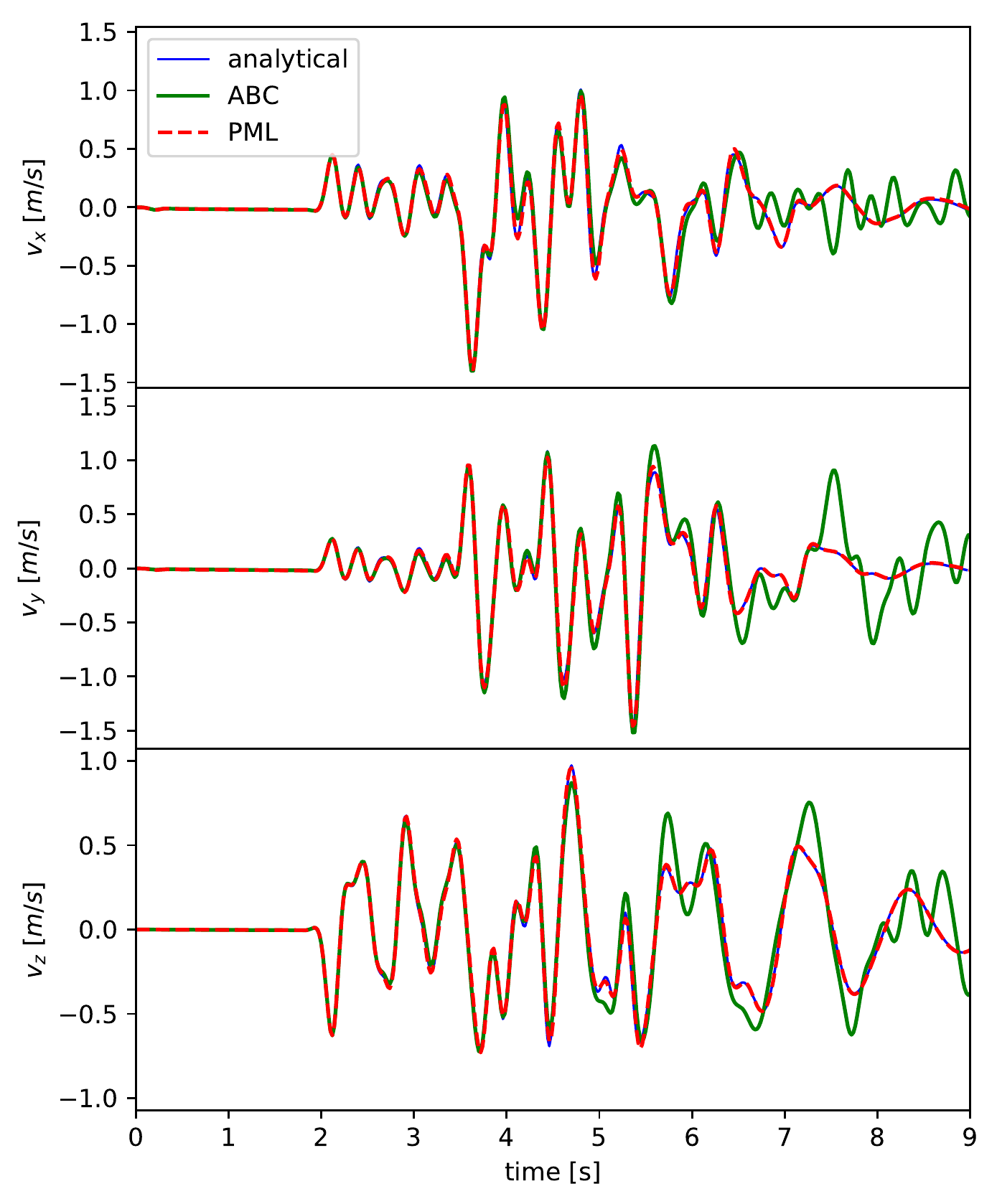}}{Receiver 9}%
     \end{subfigure}
    \caption{The LOH1 benchmark problem with degree $N = 5$ polynomial approximation. The PML solution converges spectrally to the exact solution.}
    \label{fig:loh1_o5}
\end{figure}
The solutions are displayed in Figure \ref{fig:loh1_o3} for $N =3$, and  in Figure \ref{fig:loh1_o5} for $N =5$, for Receivers 4, 5, 6 and 9. The solutions for the remaining Receivers are placed in Appendix \ref{sec:other_stations}. Note that initially the ABC and PML solutions match the analytical solution very well. However, at later times the ABC solution  is polluted by numerical reflections arriving from the artificial boundaries. 
Note in particular, for $ N = 3$, there are numerical errors which are due to numerical resolution, and can be eliminated by $p-$ or $h-$refinement. As expected, for $N = 5$ these errors diminish spectrally, the PML solutions match the analytical solution excellently, and remains accurate for the entire simulation duration. For the ABC, the dominant errors are the errors introduced by artificial reflections, these can never diminish with $p-$ or $h-$refinement.
\section{Concluding remarks}
We have developed a stable  DG method for PMLs truncating  3D and 2D linear elastic wave equation.
In a series of  papers  \cite{Duruthesis2012,DuKrSIAM,KDuru2016,DuruKozdonKreiss2016} we established theoretically the stability of the PML subject to linear well-posed boundary conditions, and developed provably stable finite difference schemes for the PML IBVP. In this paper, we extend some of these results to the DG approximation of the elastic wave equation, in first order form, and develop new theories.

We began by constructing continuous energy  estimates, in the time domain for the elastic wave equation, and in the Laplace space for the PML, by   assuming  variations only in one space dimension,  in 2D for a PML edge problem, and in 3 space dimension for a PML corner problem.
We developed a DG method for the linear wave equation  using physically motivated numerical fluxes and penalty parameters, which are compatible with all well-posed, internal and external, boundary conditions. When the PML damping vanishes, by construction, our choice of penalty parameters yield an upwind scheme and a discrete energy estimate analogous to the continuous energy estimate.  

To ensure numerical stability of the discretization when PML damping is present, it is necessary to extend the  numerical DG fluxes, and the numerical  inter-element and boundary  procedures, to the PML auxiliary differential equations. This is crucial  for deriving discrete energy estimates analogous to the continuous energy estimates.   
Numerical solutions are evolved in time using the high order arbitrary  derivative (ADER)  time stepping  scheme of the same order of accuracy with the spatial discretization.  By combining the DG spatial approximation with the high order ADER  time stepping  scheme and the accuracy of the PML we obtain a spectrally  accurate wave propagation solver in the time domain.  Numerical experiments are presented verifying numerical stability and high order accuracy. More importantly, the numerical experiments  show that the stabilizing PML flux fluctuations are necessary for numerical stability.   

Further, we present numerical experiments in 3D verifying the analysis, accuracy and demonstrating the effectiveness of the PML in seismolgical applications.
The 2D and 3D production code, for elastic waves simulation, have been implemented in ExaHyPE (www.exahype.eu), a simulation engine for hyperbolic PDEs, on adaptive Cartesian meshes, for exa-scale supercomputers. This software, ExaHyPE, is open source and publicly available.

%
%
\begin{acknowledgements}
The work presented in this paper was enabled by funding from the European Union's Horizon 2020 research and innovation program under grant agreement No 671698 (ExaHyPE). 
\newline
{}\hfill{\includegraphics[angle=90, width=0.15\textwidth]{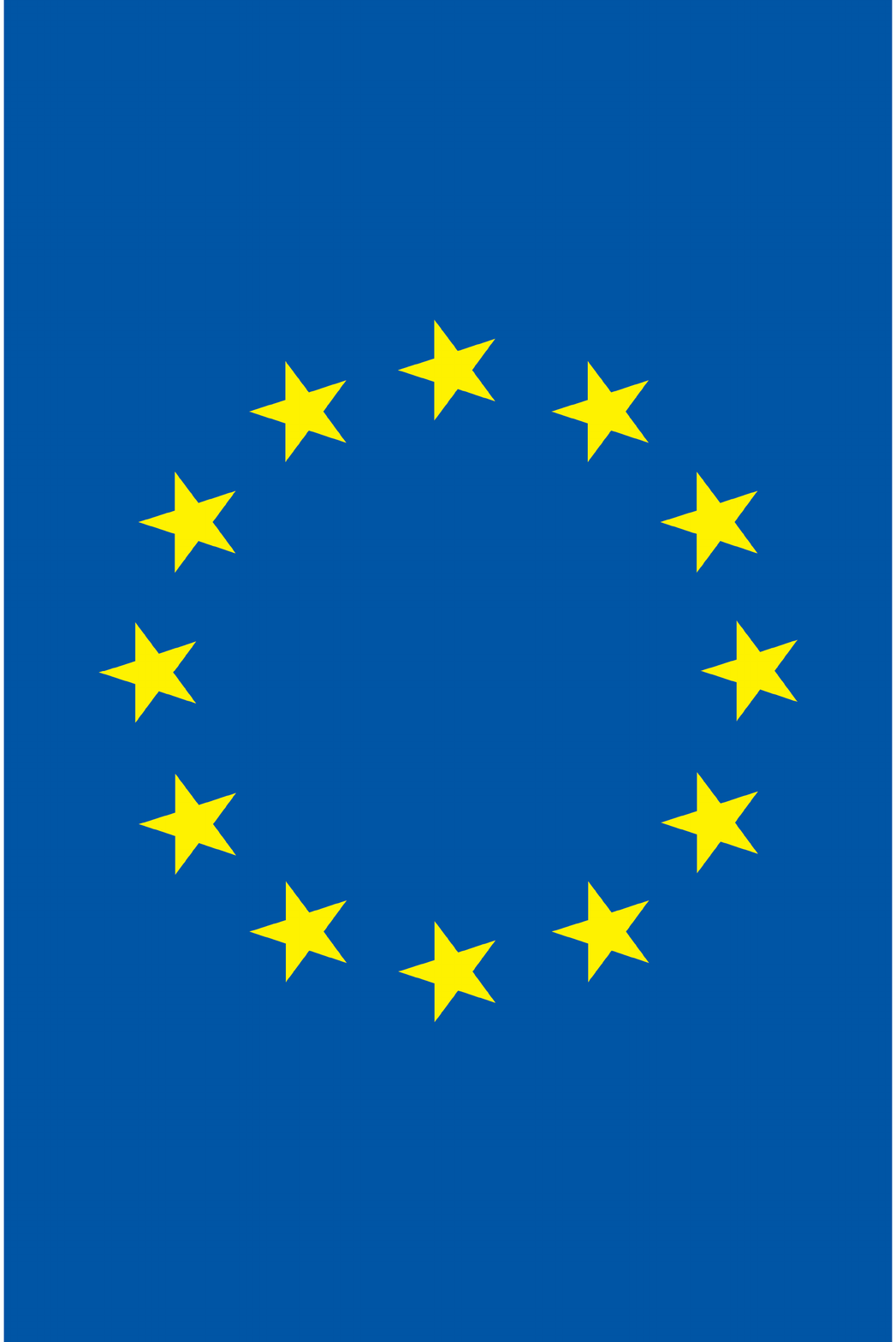}}

A.-A.G.\ acknowledges additional support by the German Research Foundation (DFG) (projects no.~KA 2281/4-1, GA 2465/2-1, GA 2465/3-1), by BaCaTec (project no.~A4) and BayLat, by KONWIHR -- the Bavarian Competence Network for Technical and Scientific High Performance Computing (project NewWave), by KAUST-CRG (GAST, grant no.~ORS-2016-CRG5-3027 and FRAGEN, grant no.~ORS-2017-CRG6 3389.02), by the European Union's Horizon 2020 research and innovation program (ChEESE, grant no.~823844 and TEAR, grant no.~852992).
\newline
{}\hfill{\includegraphics[angle=0, width=0.2\textwidth]{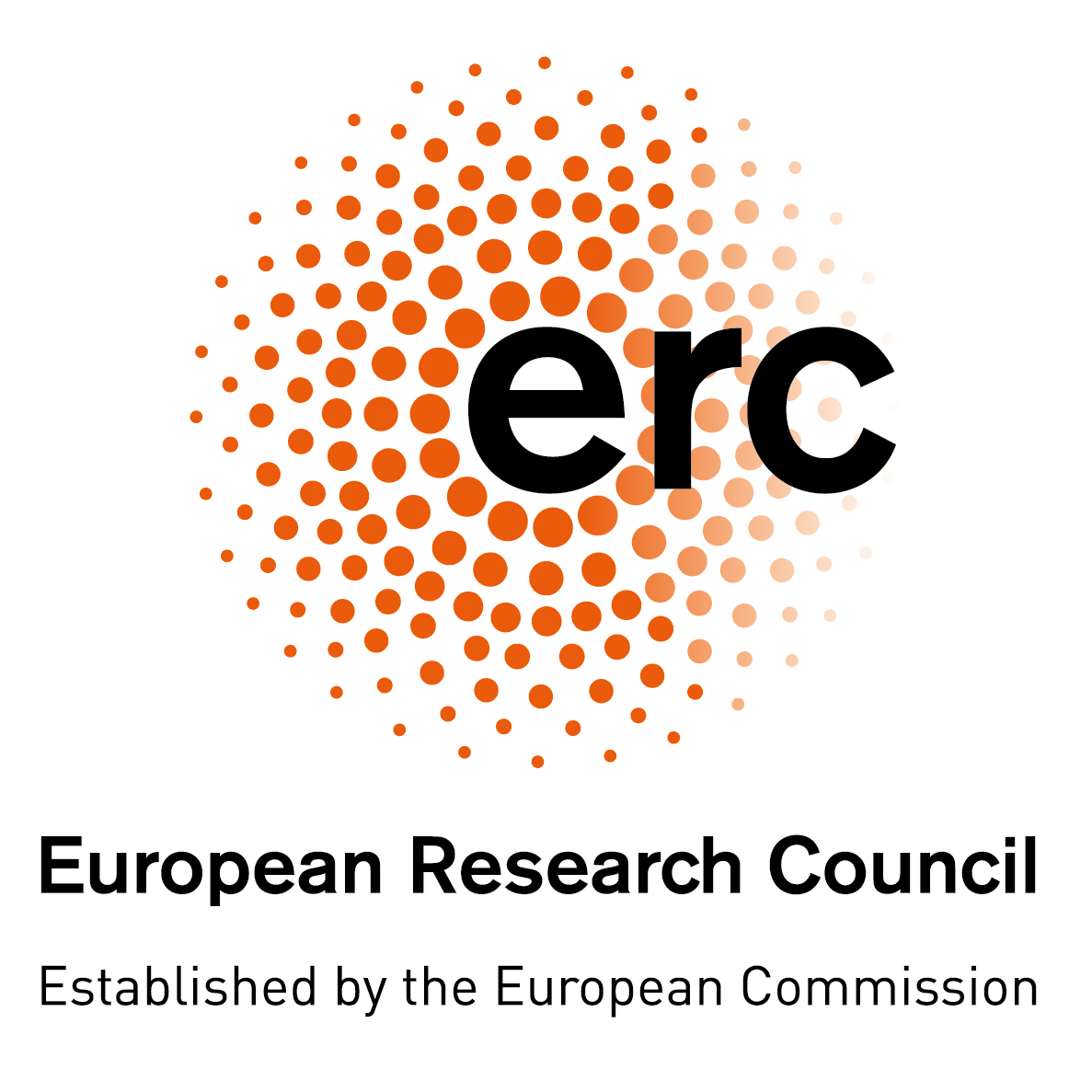}}
\end{acknowledgements}


%
%

\appendix
\section{Other Receivers}\label{sec:other_stations}

\begin{figure}[h!]
\begin{subfigure}
    \centering
\stackunder[5pt]{\includegraphics[width=0.2\textwidth]{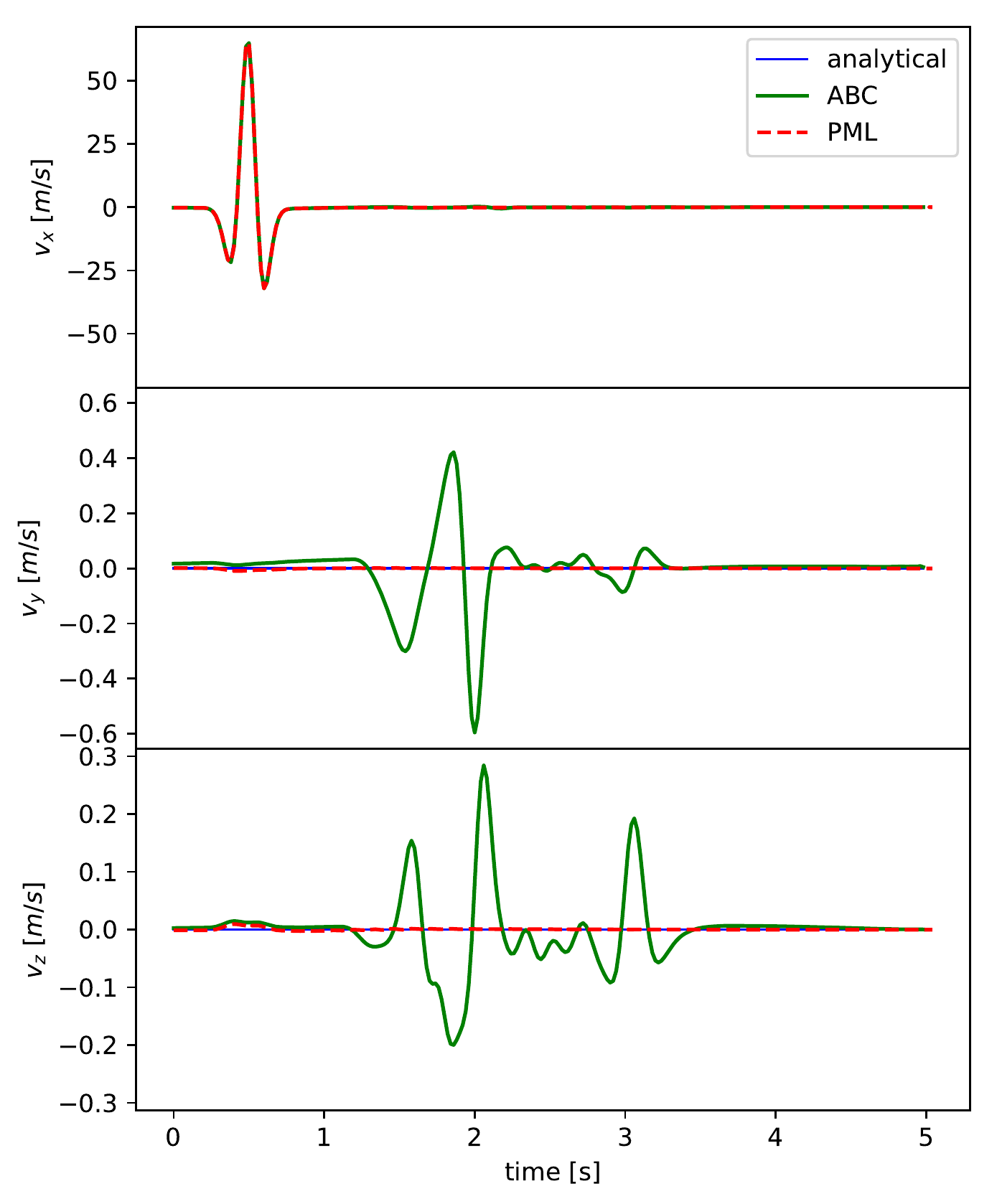}}{Receiver 1}%
\hspace{0.0cm}%
\stackunder[5pt]{\includegraphics[width=0.2\textwidth]{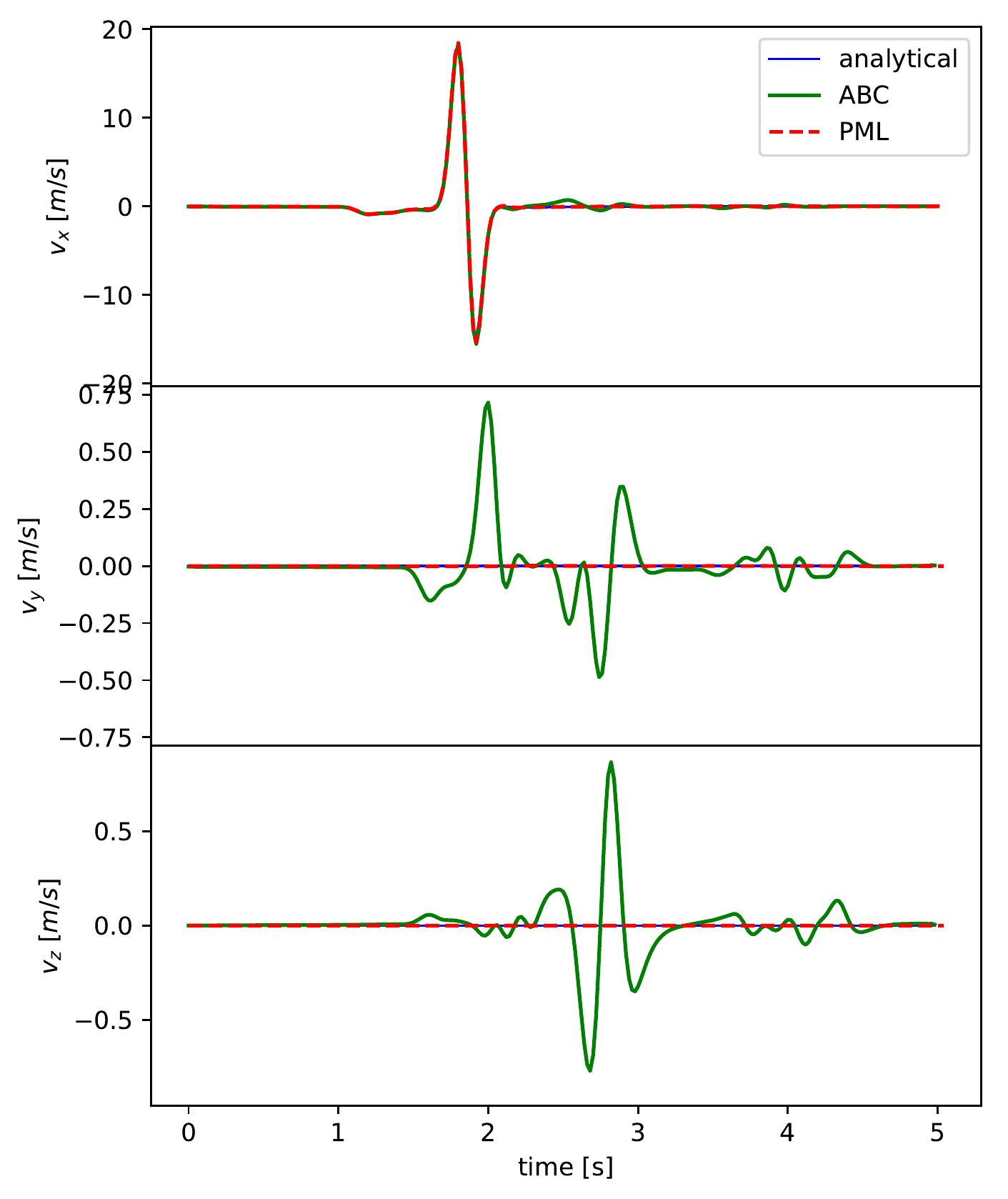}}{Receiver 2}%
\hspace{0.0cm}%
\stackunder[5pt]{\includegraphics[width=0.2\textwidth]{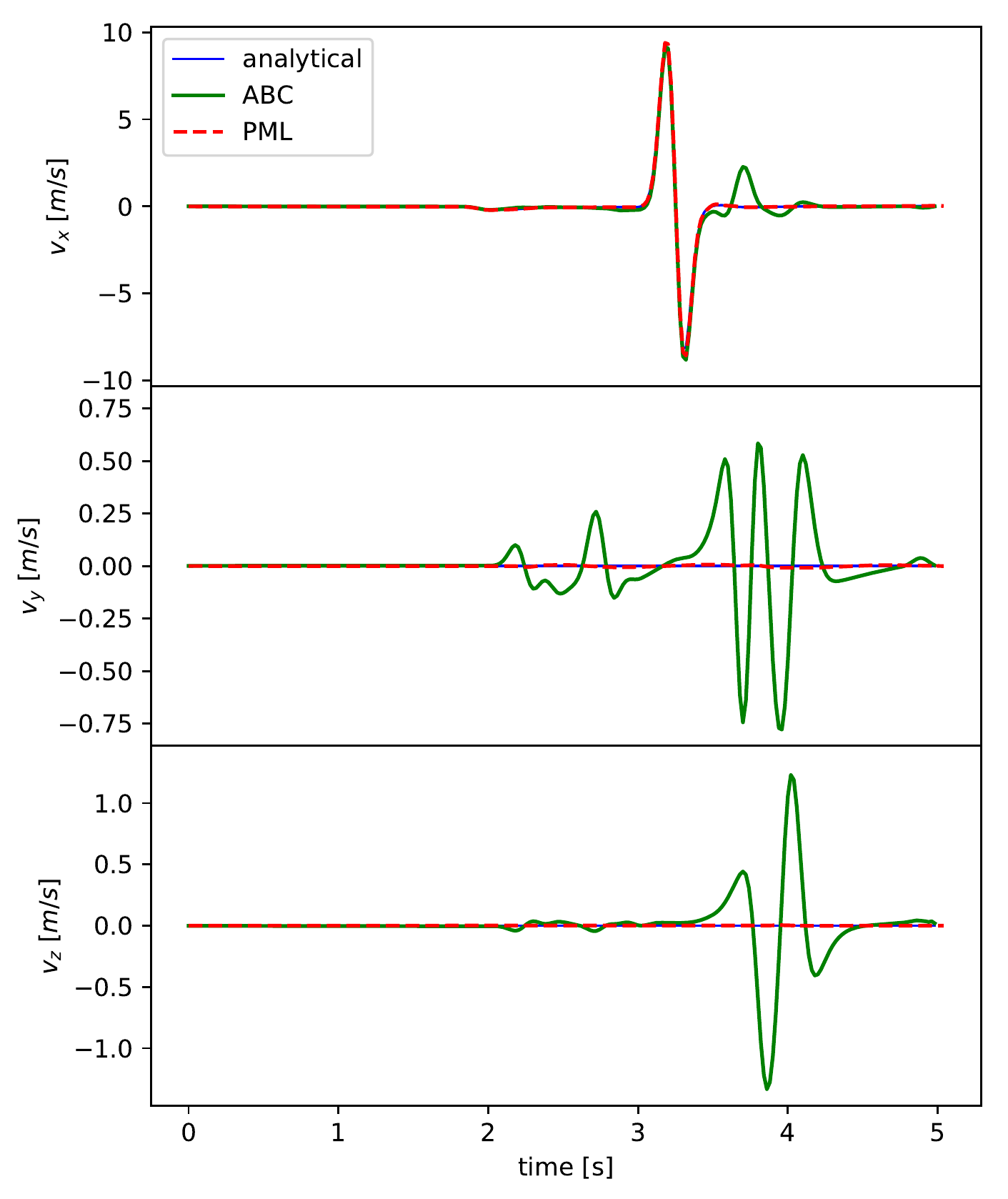}}{Receiver 3}%
\hspace{0.0cm}%
\stackunder[5pt]{\includegraphics[width=0.2\textwidth]{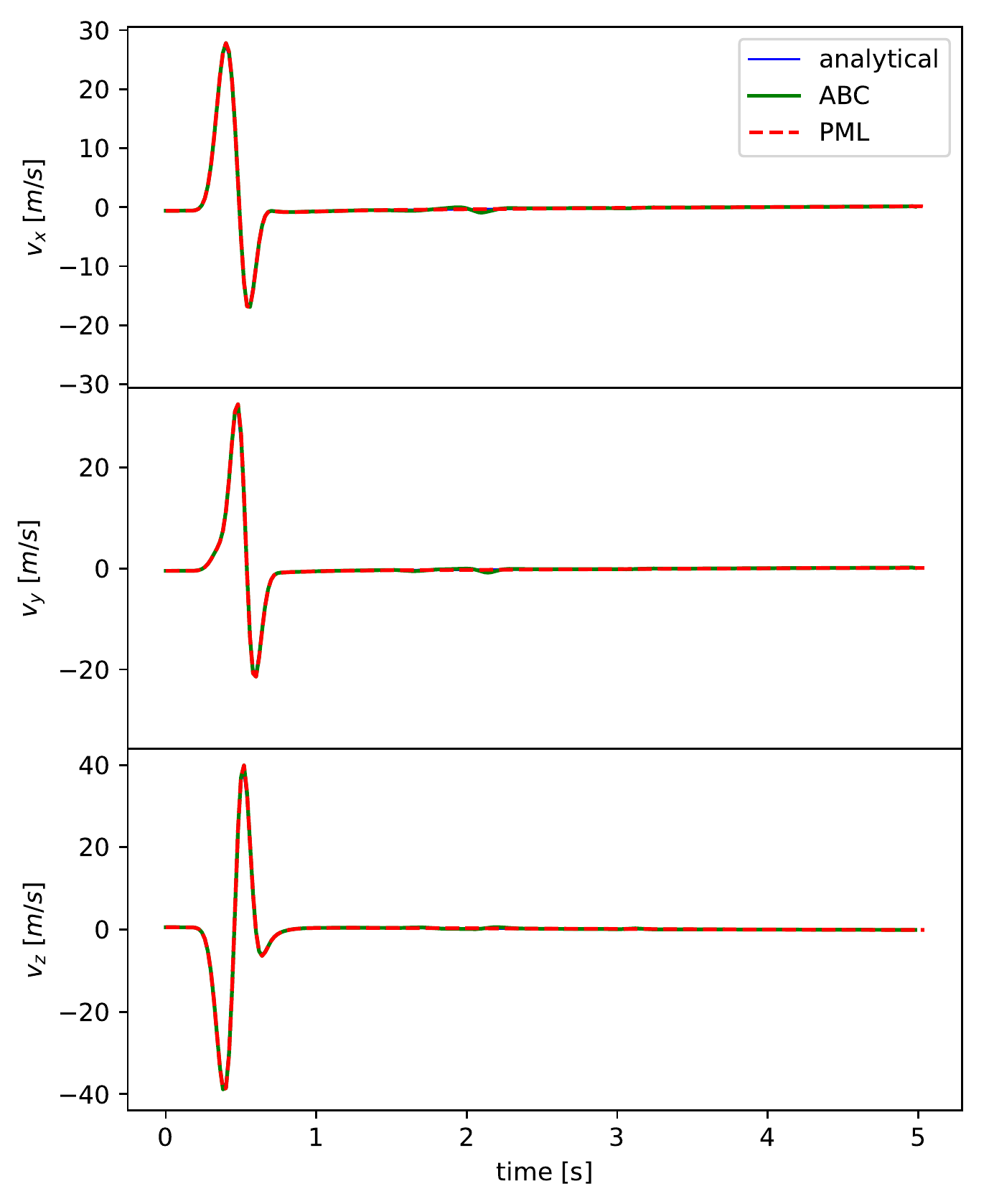}}{Receiver 7}%
\hspace{0.0cm}%
\stackunder[5pt]{\includegraphics[width=0.2\textwidth]{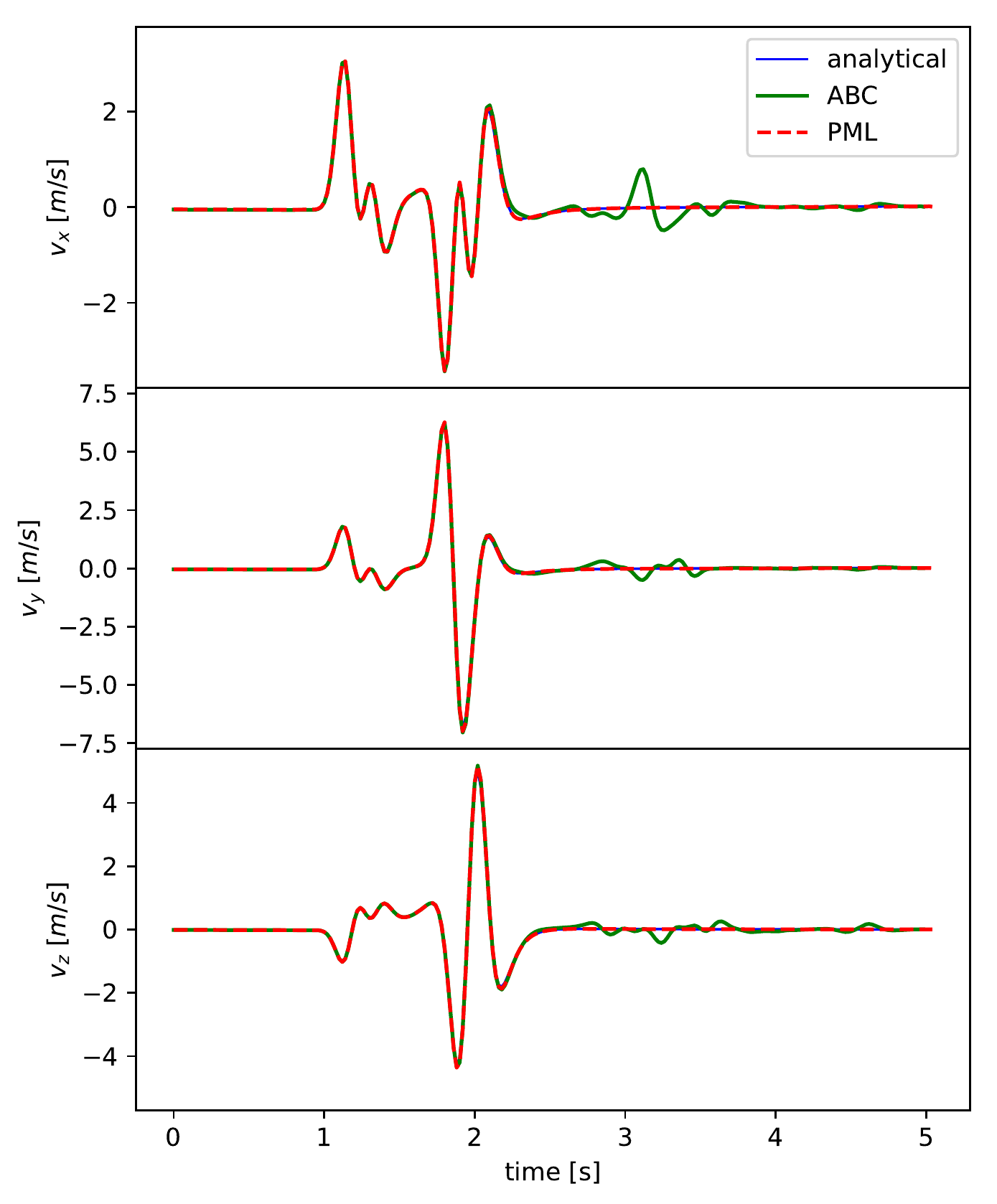}}{Receiver 8}%
     \end{subfigure}
    \caption{The homogeneous half space problem}
    \label{fig:halfspace_other_Receivers}
\end{figure}

\begin{figure}[h!]
\begin{subfigure}
    \centering
\stackunder[5pt]{\includegraphics[width=0.2\textwidth]{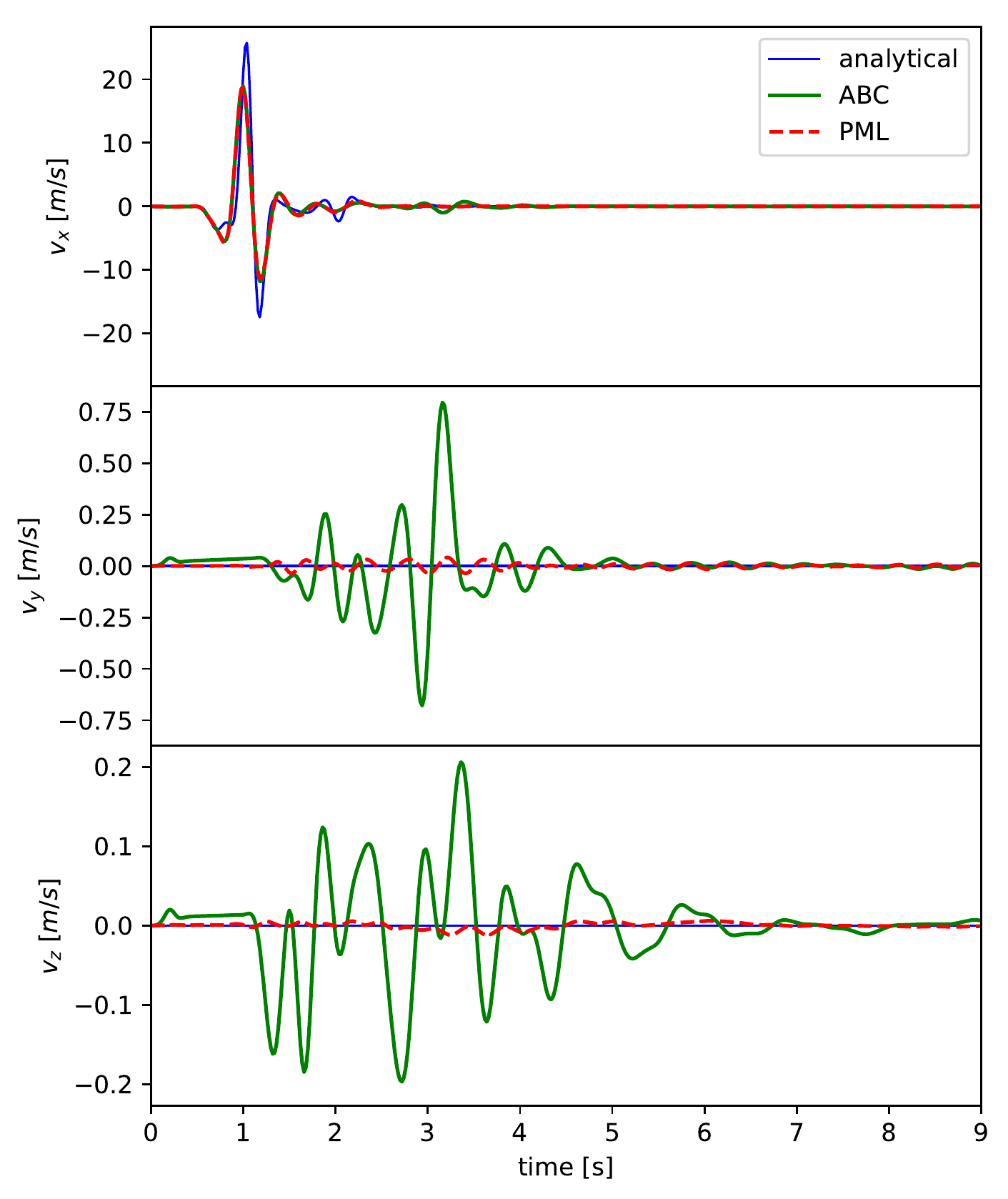}}{Receiver 1}%
\hspace{0.0cm}%
\stackunder[5pt]{\includegraphics[width=0.2\textwidth]{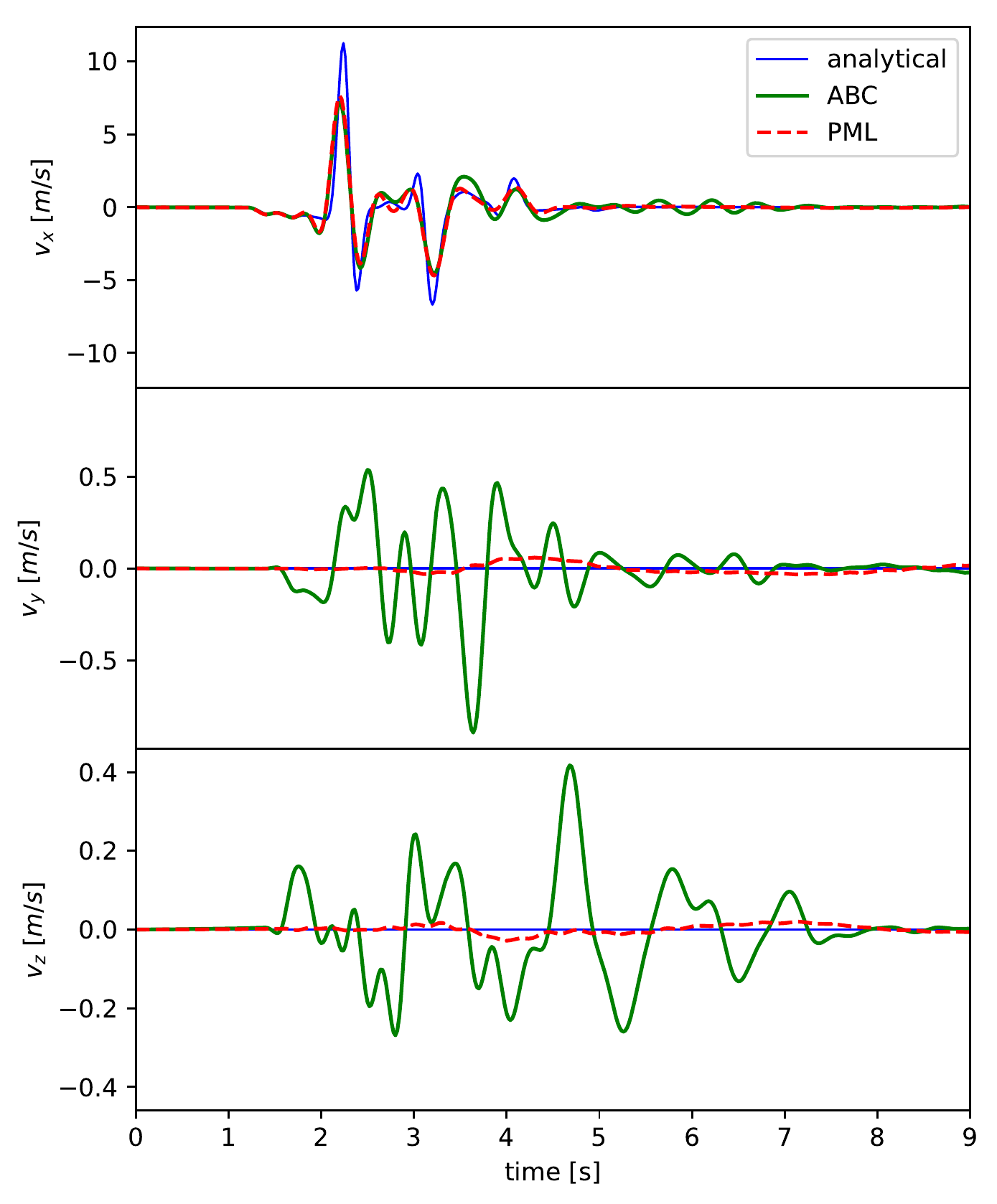}}{Receiver 2}%
\hspace{0.0cm}%
\stackunder[5pt]{\includegraphics[width=0.2\textwidth]{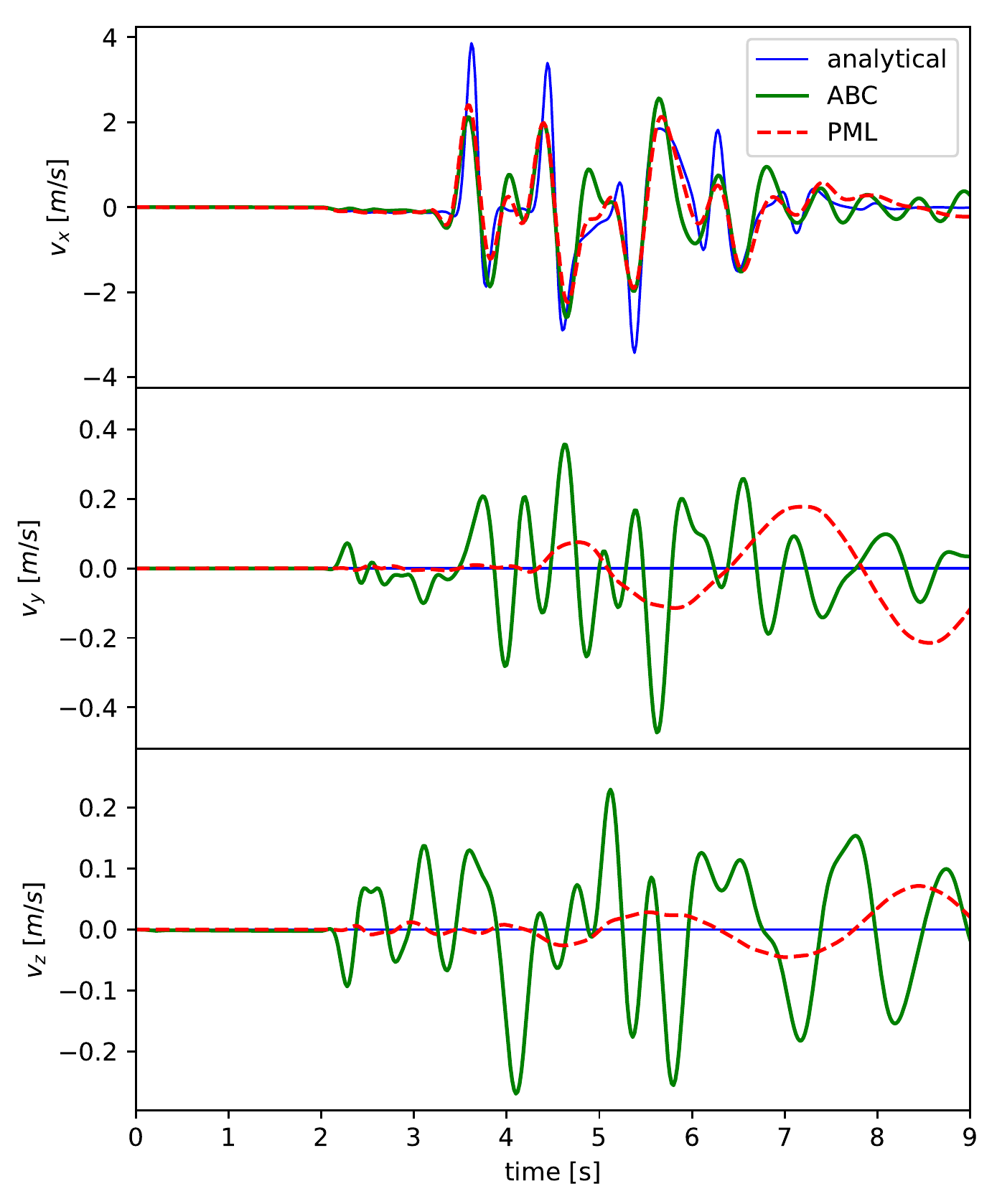}}{Receiver 3}%
\hspace{0.0cm}%
\stackunder[5pt]{\includegraphics[width=0.2\textwidth]{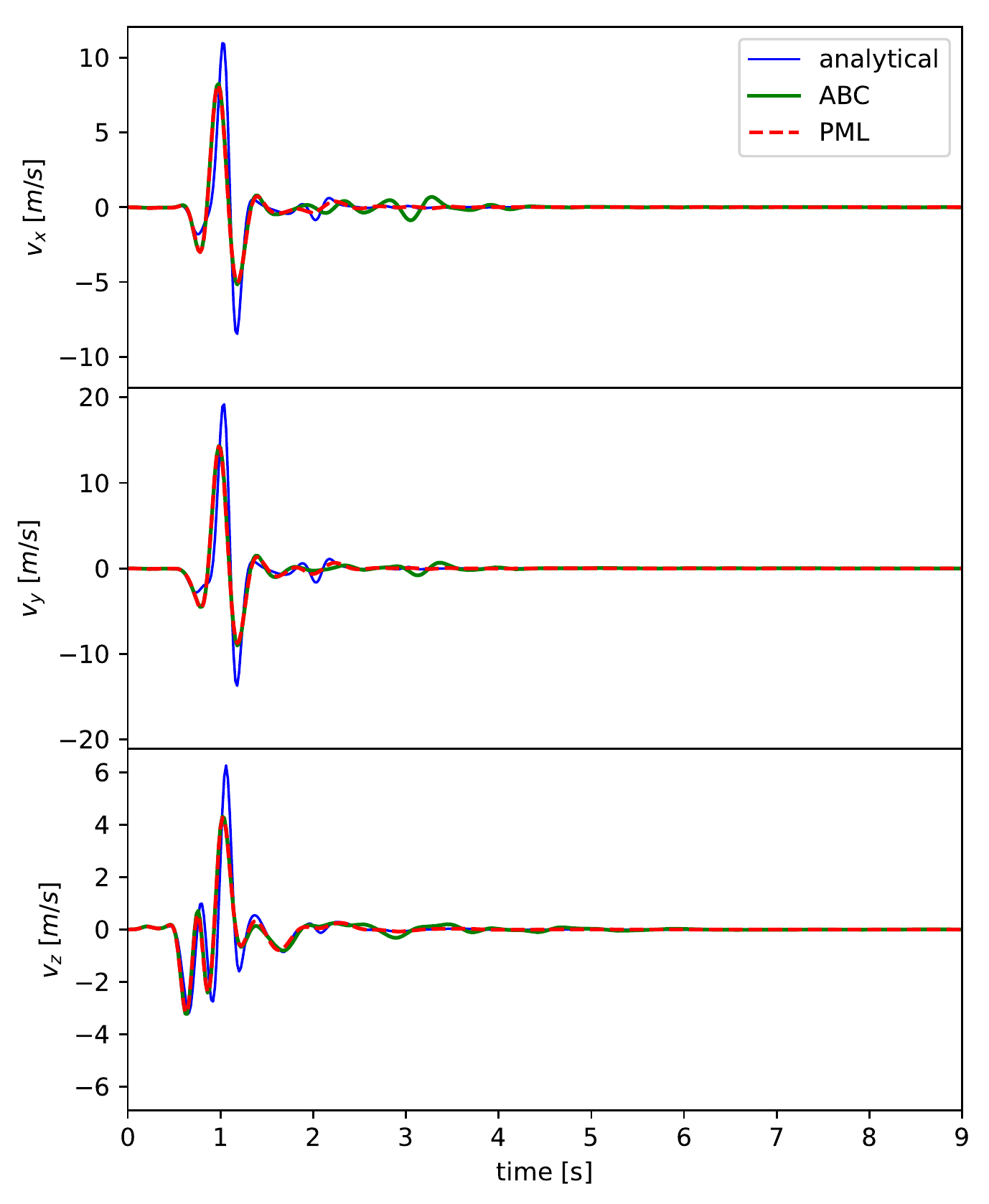}}{Receiver 7}%
\hspace{0.0cm}%
\stackunder[5pt]{\includegraphics[width=0.2\textwidth]{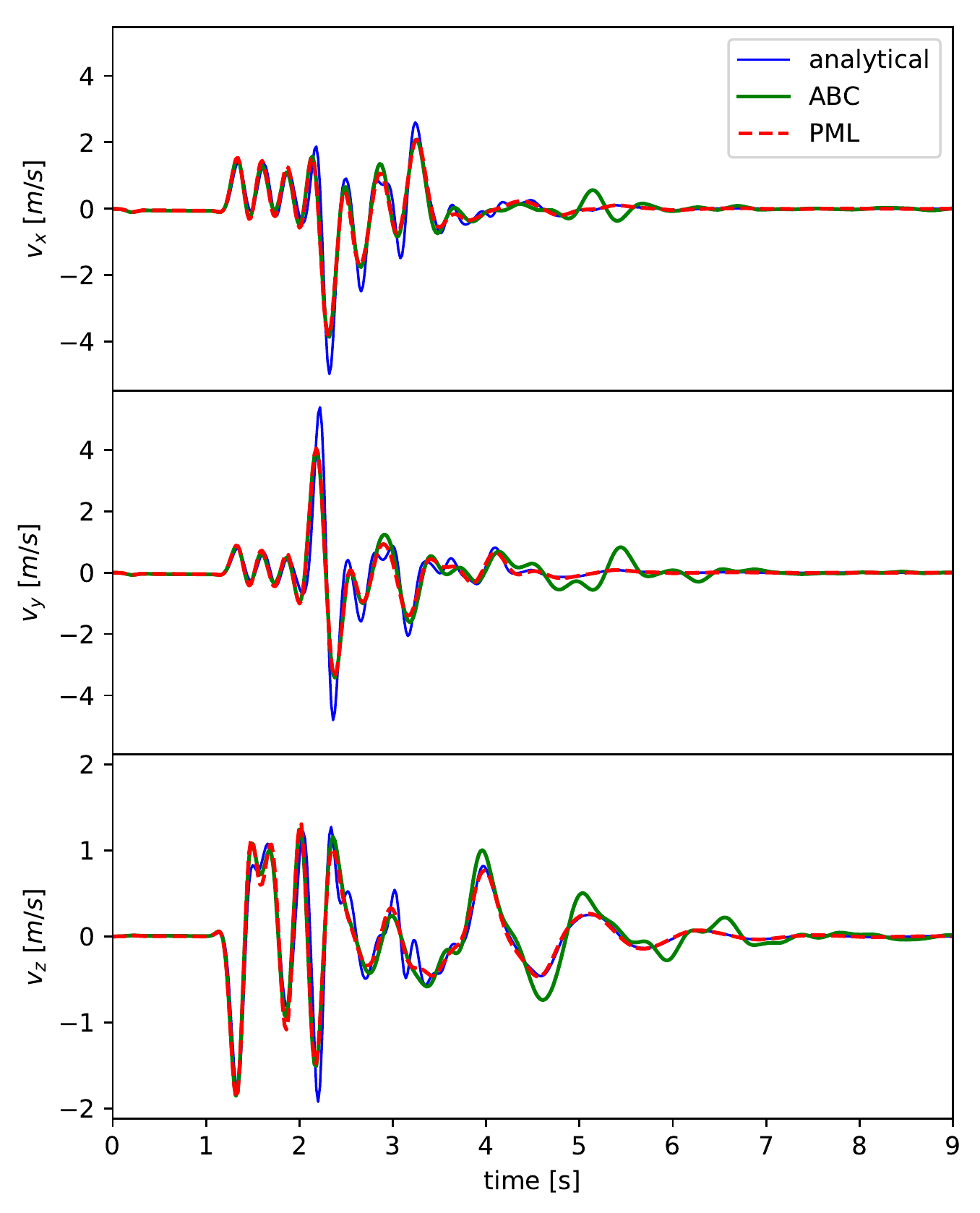}}{Receiver 8}%
     \end{subfigure}
    \caption{The LOH1 benchmark problem with degree $N = 3$ polynomial approximation}
    \label{fig:loh1_o3_other_Receivers}
\end{figure}

\begin{figure}[h!]
\begin{subfigure}
    \centering
\stackunder[5pt]{\includegraphics[width=0.2\textwidth]{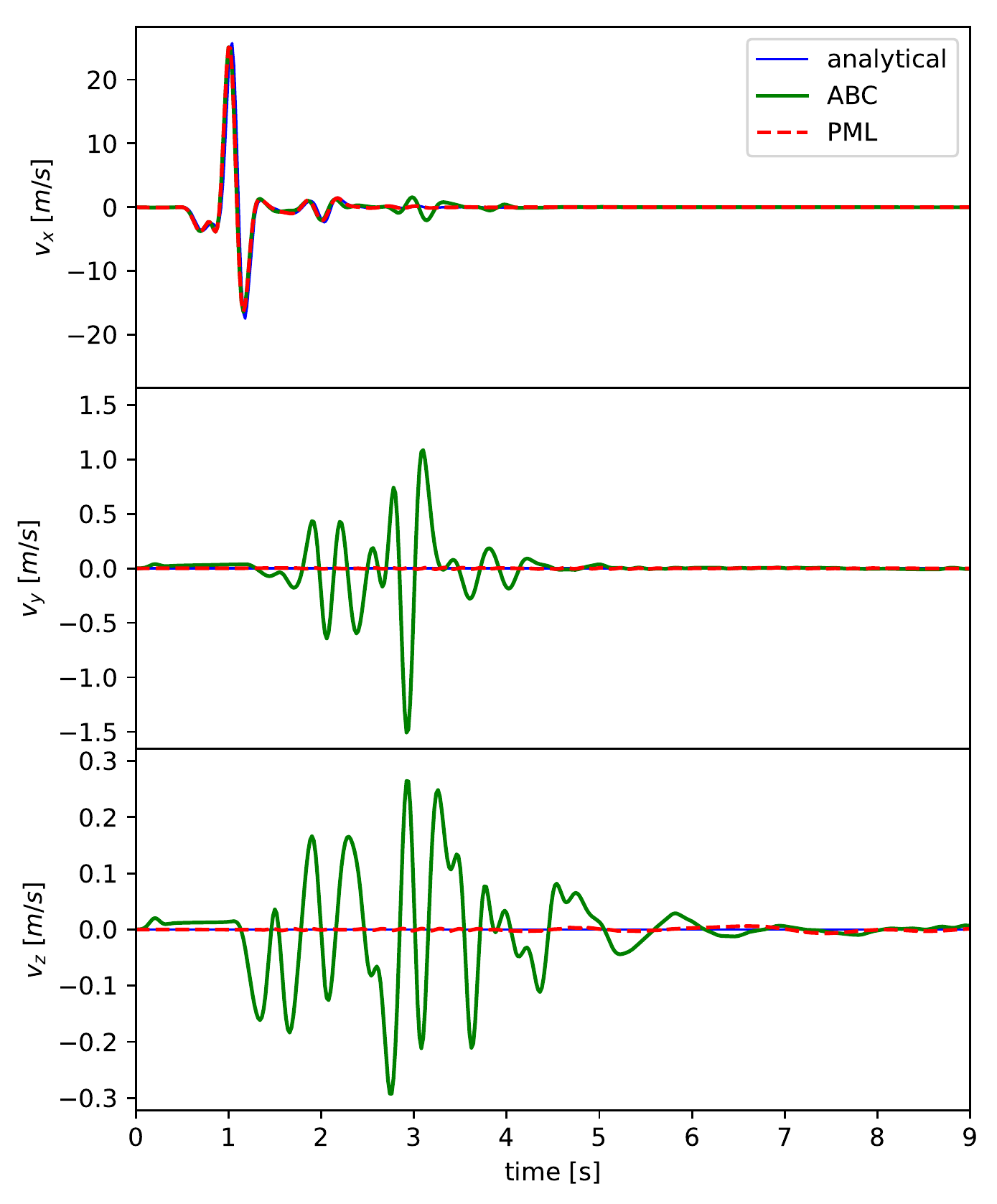}}{Receiver 1}%
\hspace{0.0cm}%
\stackunder[5pt]{\includegraphics[width=0.2\textwidth]{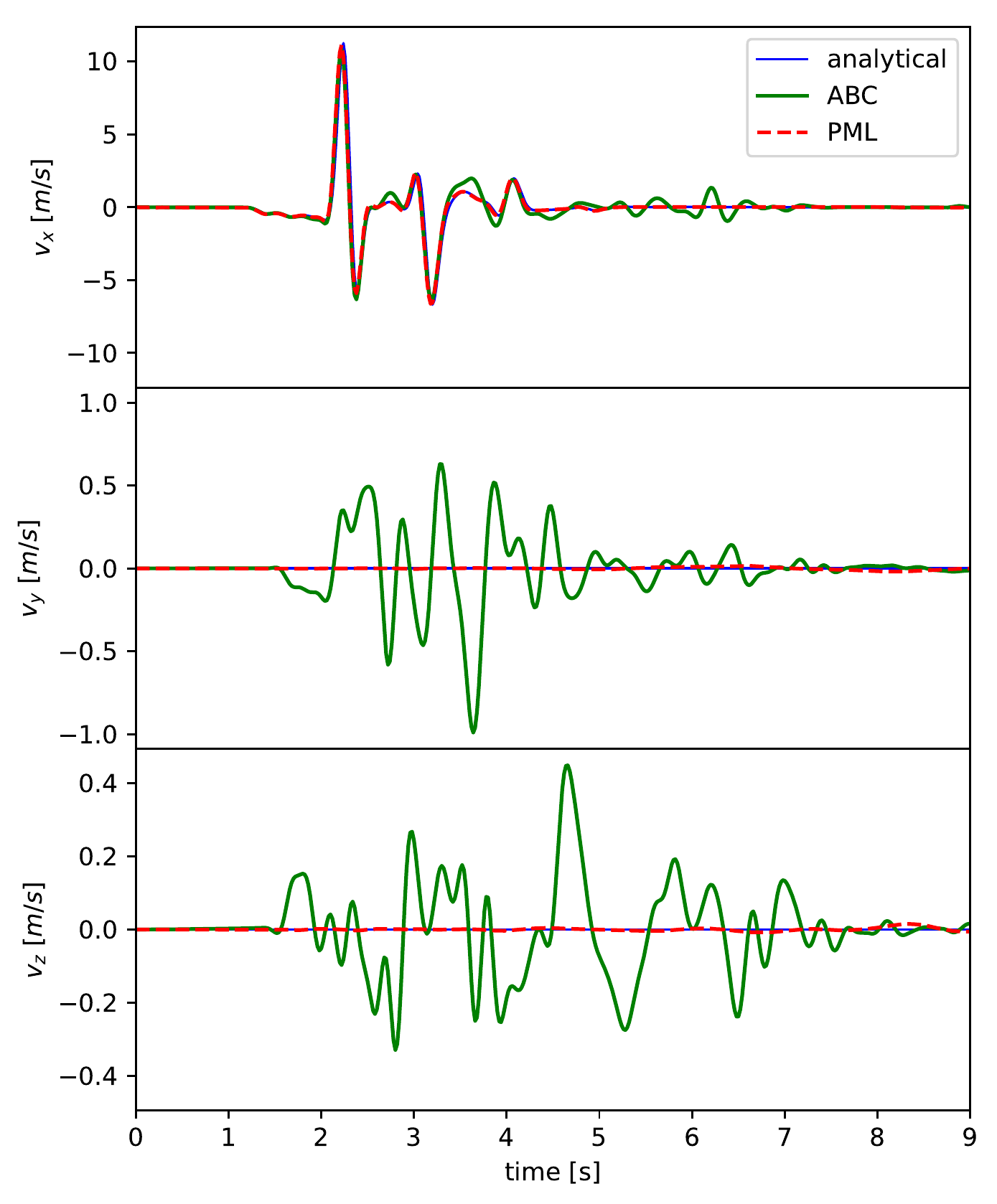}}{Receiver 2}%
\hspace{0.0cm}%
\stackunder[5pt]{\includegraphics[width=0.2\textwidth]{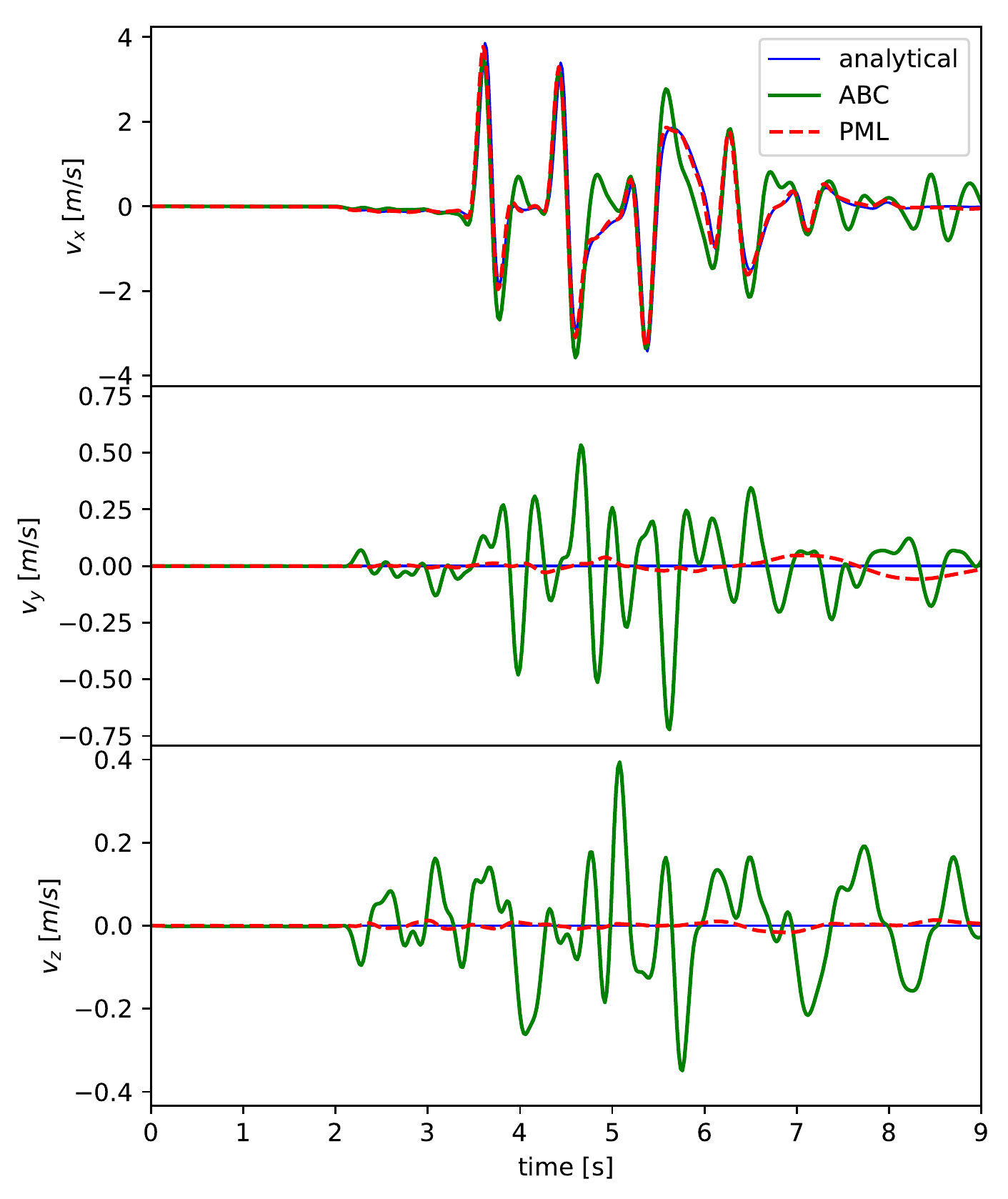}}{Receiver 3}%
\hspace{0.0cm}%
\stackunder[5pt]{\includegraphics[width=0.2\textwidth]{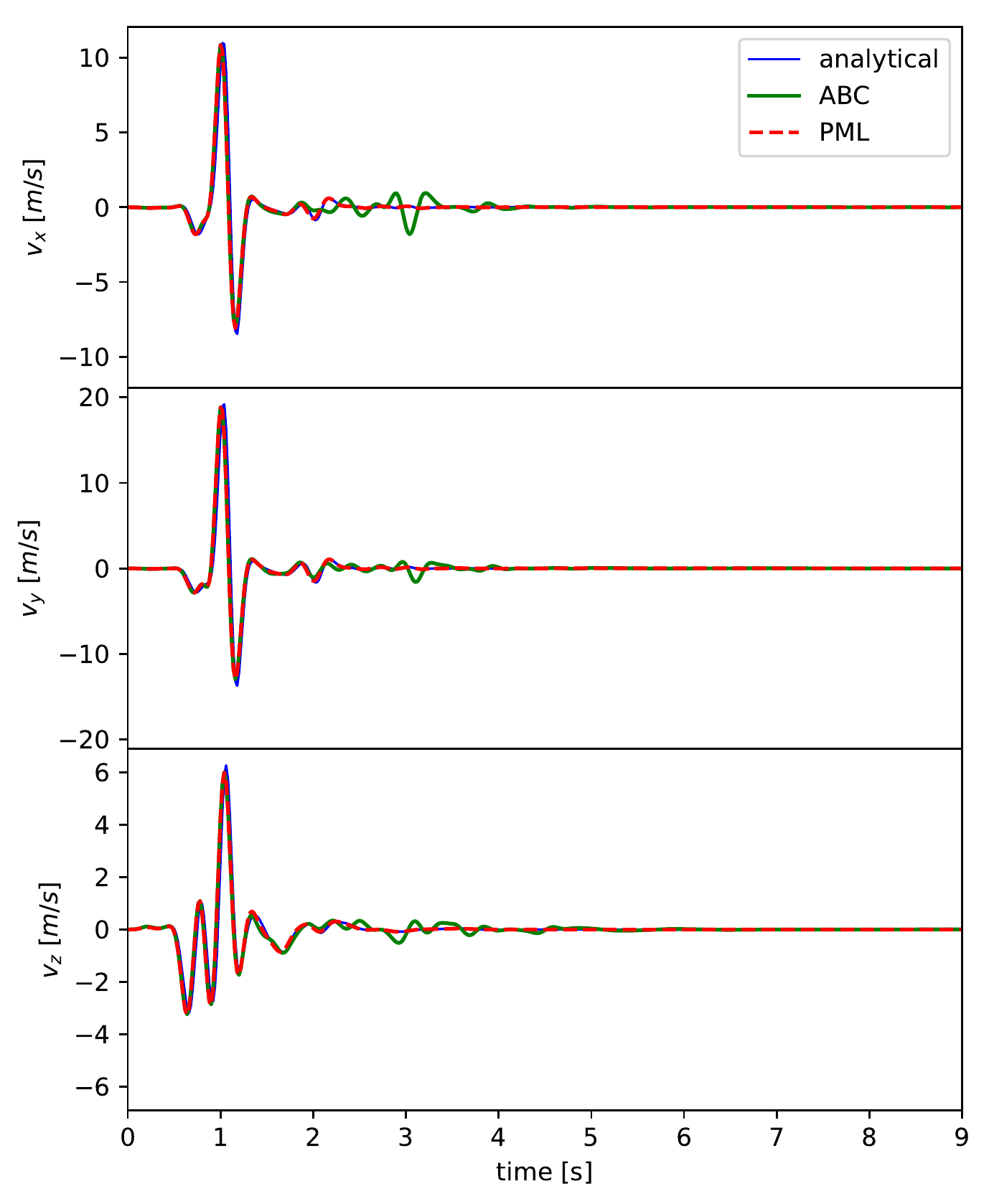}}{Receiver 7}%
\hspace{0.0cm}%
\stackunder[5pt]{\includegraphics[width=0.2\textwidth]{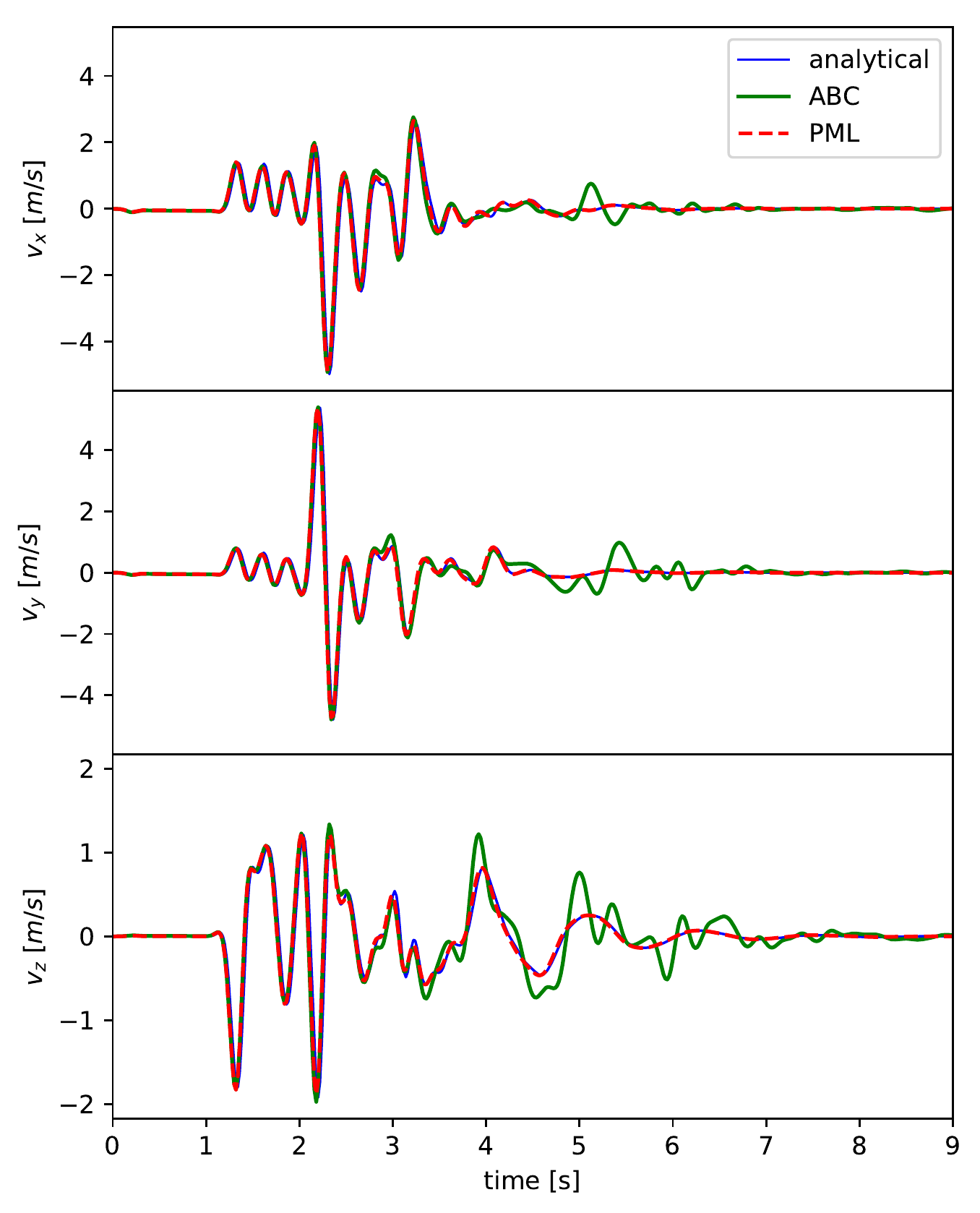}}{Receiver 8}%
     \end{subfigure}
    \caption{The LOH1 benchmark problem with degree $N = 5$ polynomial approximation}
    \label{fig:loh1_o5_other_Receivers}
\end{figure}

\section{ Source terms}\label{sec:source_terms}
We will derive a representation of the modified source terms in the physical space.
Let $t\ge 0$ denote time, and $f(t)$ and $g(t)$  are real functions of exponential order. We denote their corresponding Laplace transforms $\widetilde{f}(s) = \mathcal{L}\left(f(t)\right)$, $\widetilde{g}(s) = \mathcal{L}\left(g(t)\right)$.
We also define the convolution operator $*$, 
\[
g(t)*f(t) = \int_{0}^{t} g(t-\tau)f(\tau) d\tau.
\]
Note that $\mathcal{L}\left(g(t)*f(t)\right) = \widetilde{f}(s)\widetilde{g}(s)$.
To succeed we will use the Dirac  delta distribution $\delta(t)$
\[
\int_{0}^{+\infty} \delta(t) dt = 1,
\]
with
\[
\delta(t)*f(t) = f(t), \quad \delta^{\prime}(t)*f(t) = f^{\prime}(t) + \delta(t)f(0).
\]
We also recall 
\[
\mathcal{L}\left(\delta^{n}(t)\right) = s^n, \quad n = 0, 1, 2, \cdots  ,
\]
where  $\delta^{n}(t)$ is the $n$-th derivative of $\delta(t)$.

We have
\[
\mathcal{L}\left(\delta^{\prime}(t)*f(t) \right) = s\widetilde{f}(s).
\]
Consider the modified source terms in the Laplace domain
\begin{align}
\widetilde{\mathbf{F}} =     \mathbf{P}\left(S_x \widetilde{\mathbf{F}}_{Q}  - \sum_{\xi = x,y, z} \frac{d_{\xi}S_x }{s + \alpha_{\xi} + d_{\xi}} \widetilde{\mathbf{F}}_{w_\xi}\right).
\end{align}
If $d_{\xi} = d_x = d \ge 0$ and $\alpha_{\xi} = \alpha_{x} = \alpha \ge 0$, then 
\begin{align}
S_x \widetilde{\mathbf{F}}_{Q}=    \left(1 + \frac{d}{s + \alpha} \right)\widetilde{\mathbf{F}}_{Q}, \quad   \frac{S_x }{s + \alpha_{\xi} + d_{\xi}} \widetilde{\mathbf{F}}_{w_\xi} =  \frac{1 }{s + \alpha} \widetilde{\mathbf{F}}_{w_\xi},
\end{align}
and
{
\small
\begin{align}\label{eq:physical_data}
\mathcal{L}^{-1}\left(S_x \widetilde{\mathbf{F}}_{Q}\right) =    \left(\delta(t) + {d} e^{-\alpha t} \right) * {\mathbf{F}}_{Q}\left(x,y,z, t\right), 
\quad
  \mathcal{L}^{-1}\left( \frac{1 }{s + \alpha} \widetilde{\mathbf{F}}_{w_\xi}\right) =    e^{-\alpha t} * {\mathbf{F}}_{w_\xi} \left(x,y,z, t\right), \quad \Re{s} > - \alpha.
\end{align}
}
\section{Proof of Theorem \ref{theo:PML_2D_Edge_Laplace}}\label{sec:proof_theorem2}
\begin{proof}
  As before, multiply  \eqref{eq:weak_Laplace_2D} with $\widetilde{\mathbf{Q}}^{\dagger}$ from the left and integrate over the whole spatial domain.
   Integration-by-parts gives
  {
  \begin{equation}\label{eq:weak_Laplace_2D_1}
\begin{split}
\int_{\Omega}\left( \left(sS_x\right)\widetilde{\mathbf{Q}}^{\dagger} \mathbf{P}^{-1} \widetilde{\mathbf{Q}} \right)dxdy dz &= \sum_{\xi = x, y}\frac{1}{2}\int_{\Omega}\left(\left[\widetilde{\mathbf{Q}}^\dagger\left(\mathbf{A}_{\xi}\frac{\partial{\widetilde{\mathbf{Q}}}}{\partial \xi}\right)-\left(\mathbf{A}_{\xi}\frac{\partial{\widetilde{\mathbf{Q}}}}{\partial \xi}\right)^\dagger \mathbf{Q}\right] \right)dxdydz \\
&+  \sum_{\xi = x, y}\int_{\widetilde{\Gamma}}\left(\left[\widetilde{\mathbf{Q}}^{\dagger}{A}_\xi \widetilde{\mathbf{Q}}\right] \Big|_{-1}^{1}\right)\frac{dxdydz}{d\xi} 
+ \int_{\Omega}\left(\widetilde{\mathbf{Q}}^{\dagger} \mathbf{P}^{-1}\widetilde{\mathbf{F}} \right)dxdydz.
  \end{split}
  \end{equation}
  }
  Adding  \eqref{eq:weak_Laplace_2D_1} to its  complex conjugate, the spatial derivative terms vanish, yielding
  {
  \begin{equation}\label{eq:weak_Laplace_2D_2}
\begin{split}
\|\sqrt{\Re(s S_x)}\widetilde{\mathbf{Q}} (\cdot ,\cdot ,\cdot , s)\|_{P}^2 = \frac{1}{2}\left[\left(\widetilde{\mathbf{Q}},\widetilde{\mathbf{F}} \right)_{{P}}  + \left(\widetilde{\mathbf{F}},\widetilde{\mathbf{Q}} \right)_{{P}}\right] +  \sum_{\xi = x, y}\int_{\widetilde{\Gamma}}\left(\left[\widetilde{\mathbf{Q}}^{\dagger}{A}_x \widetilde{\mathbf{Q}}\right] \Big|_{-1}^{1}\right)\frac{dxdydz}{d\xi} .
  \end{split}
  \end{equation}
  }
  Using Cauchy-Schwarz inequality and the fact \eqref{eq:BT_Negative} in the right hand side of \eqref{eq:weak_Laplace_2D_2} completes the proof.
 \end{proof}
   
\section{Proof of Theorem \ref{theo:PML_3D_Corner_Laplace}}\label{sec:proof_theorem3}
\begin{proof}
  As above, multiply  \eqref{eq:weak_Laplace_Corner} with $\widetilde{\mathbf{Q}}^{\dagger}$ from the left and integrate over the whole spatial domain.
   Integration-by-parts gives
  {
  \small
  \begin{equation}\label{eq:weak_Laplace_3D_1}
\begin{split}
\int_{\Omega}\left( \left(sS_x\right)\widetilde{\mathbf{Q}}^{\dagger} \mathbf{P}^{-1} \widetilde{\mathbf{Q}} \right)dxdy dz &= \sum_{\xi = x, y,z}\frac{1}{2}\int_{\Omega}\left(\left[\widetilde{\mathbf{Q}}^\dagger\left(\mathbf{A}_{\xi}\frac{\partial{\widetilde{\mathbf{Q}}}}{\partial \xi}\right)-\left(\mathbf{A}_{\xi}\frac{\partial{\widetilde{\mathbf{Q}}}}{\partial \xi}\right)^\dagger \mathbf{Q}\right] \right)dxdydz \\
&+  \sum_{\xi = x, y,z}\int_{\widetilde{\Gamma}}\left(\left[\widetilde{\mathbf{Q}}^{\dagger}{A}_\xi \widetilde{\mathbf{Q}}\right] \Big|_{-1}^{1}\right)\frac{dxdydz}{d\xi} 
+ \int_{\Omega}\left(\widetilde{\mathbf{Q}}^{\dagger} \mathbf{P}^{-1}\widetilde{\mathbf{F}} \right)dxdydz.
  \end{split}
  \end{equation}
  }
  Adding the complex conjugate of the product, the spatial derivative terms vanish, yielding
  {\small
  \begin{equation}\label{eq:weak_Laplace_3D_2}
\begin{split}
\|\sqrt{\Re(s S_x)}\widetilde{\mathbf{Q}} (\cdot ,\cdot ,\cdot , s)\|_{P}^2 = \frac{1}{2}\left[\left(\widetilde{\mathbf{Q}},\widetilde{\mathbf{F}} \right)_{{P}}  + \left(\widetilde{\mathbf{F}},\widetilde{\mathbf{Q}} \right)_{{P}}\right] +  \sum_{\xi = x, y, z}\int_{\widetilde{\Gamma}}\left(\left[\widetilde{\mathbf{Q}}^{\dagger}{A}_\xi \widetilde{\mathbf{Q}}\right] \Big|_{-1}^{1}\right)\frac{dxdydz}{d\xi} , \quad \Re{s} > 0.
  \end{split}
  \end{equation}
  }
  Using Cauchy-Schwarz inequality and the fact \eqref{eq:BT_Negative} in the right hand side of \eqref{eq:weak_Laplace_3D_2} completes the proof.
 \end{proof}

\section{Proof of Theorem \ref{theo:PML_2D_Edge_Laplace_Disc}}\label{sec:proof_theorem8} 
We introduce the boundary terms
 \begin{align*}
 \mathcal{BT}_s\left(\widehat{\widetilde{v}}_{\eta}^{-}, \widehat{\widetilde{T}}_{\eta}^{-}\right)  = &  \frac{\Delta{x}}{2}  \frac{\Delta{z}}{2} \sum_{i = 1}^{P+1}\sum_{k = 1}^{P+1}\sum_{\eta = x,y,z}\left(\left( \widehat{\widetilde{T}}_\eta^{-*} \widehat{\widetilde{v}}_\eta^{-}  \right)\Big|_{r = 1} - \left( \widehat{\widetilde{T}}_\eta^{-*} \widehat{\widetilde{v}}_\eta^{-}  \right)\Big|_{r = -1}\right)_{i k} h_{i} h_{k} \\
&- \frac{\Delta{y}}{2}  \frac{\Delta{z}}{2}\sum_{i = 1}^{P+1}\sum_{k = 1}^{P+1}\sum_{\eta = x,y,z}\left(\left( \widehat{\widetilde{T}}_\eta^{-*} \widehat{\widetilde{v}}_\eta^{-}   \right)\Big|_{q = -1}\right)_{i k} h_{i} h_{k} \le 0,
 \end{align*}
 \begin{align*}
\mathcal{BT}_s\left(\widehat{\widetilde{v}}_{\eta}^{+}, \widehat{\widetilde{T}}_{\eta}^{+}\right)  = &  \frac{\Delta{x}}{2}  \frac{\Delta{z}}{2} \sum_{i = 1}^{P+1}\sum_{k = 1}^{P+1}\sum_{\eta = x,y,z}\left(\left( \widehat{\widetilde{T}}_\eta^{+*} \widehat{\widetilde{v}}_\eta^{+}  \right)\Big|_{r = 1} - \left( \widehat{\widetilde{T}}_\eta^{+*} \widehat{\widetilde{v}}_\eta^{+}  \right)\Big|_{r = -1}\right)_{i k} h_{i} h_{k} \\
&+ \frac{\Delta{y}}{2}  \frac{\Delta{z}}{2}\sum_{i = 1}^{P+1}\sum_{k = 1}^{P+1}\sum_{\eta = x,y,z}\left(\left( \widehat{\widetilde{T}}_\eta^{+*} \widehat{\widetilde{v}}_\eta^{+}   \right)\Big|_{q = 1}\right)_{i k} h_{i} h_{k} \le 0,
 \end{align*}
  the interface term
  \begin{align*}
  \mathcal{IT}_s\left(\widehat{\widetilde{v}}^{\pm}, \widehat{\widetilde{T}}^{\pm} \right) = - \frac{\Delta{y}}{2}  \frac{\Delta{z}}{2}\sum_{i = 1}^{P+1}\sum_{k = 1}^{P+1}\sum_{\eta = x,y,z}\left({\widehat{\widetilde{T}}}_\eta  \lJump{{\widehat{\widetilde{v}}}_\eta \rJump}\right)_{i k} h_{i} h_{k} \equiv 0,
\end{align*}
   and the fluctuation term
    \begin{align*}
  \mathcal{F}_{luc}\left(\widetilde{G},  Z\right) = - \sum_{\xi = x, y}\frac{\Delta{x}}{2} \frac{\Delta{y}}{2}  \frac{\Delta{z}}{2} \frac{2}{\Delta{\xi}}\sum_{\eta = x, y, z}\sum_{i = 1}^{P+1}\sum_{k = 1}^{P+1}\left(\left(\frac{1}{Z_{\eta}}|\widetilde{G}_\eta |^2\right)\Big|_{ -1} + \left(\frac{1}{Z_{\eta}}|\widetilde{G}_\eta |^2\right)\Big|_{ 1} \right)_{i, k} \le 0.
  \end{align*}
\begin{proof}
From the left,  multiply \eqref{eq:Laplace_disc_elastic_pml_1A_2D} with $\widetilde{\mathbf{Q}}^\dagger {\mathbf{H}}$, we have
  {
 \small
 \begin{equation}\label{eq:Laplace_disc_elastic_pml_1A_2D_proof_1}
\begin{split}
\left(sS_x\right)\widetilde{\mathbf{Q}}^\dagger {\mathbf{H}}\mathbf{P}^{-1}  \widetilde{\mathbf{Q}} = &   \sum_{\xi = x, y}\frac{1}{2}\widetilde{\mathbf{Q}}^\dagger \left(\mathbf{A}_{\xi}\left(\mathbf{H}\mathbf{D}_{\xi}\right) - \left(\mathbf{H}\mathbf{D}_{\xi}\right)^T\mathbf{A}_{\xi} \right)\widetilde{\mathbf{Q}}  + \widetilde{\mathbf{Q}}^\dagger {\mathbf{H}}\mathbf{P}^{-1}  \widetilde{\mathbf{F}} \\
+ & \widetilde{\mathbf{Q}}^\dagger\mathbf{H}\mathbf{H}_{\xi}^{-1}\left(\frac{1}{2}\mathbf{A}_{x}\left( \mathbf{B}_{\xi}\left(1,1\right) -  \mathbf{B}_{\xi}\left(-1,-1\right)\right)\widetilde{\mathbf{Q}} -\left({\mathbf{e}_{\xi}(-1)} \widetilde{\mathbf{FL}}_{\xi} +  {\mathbf{e}_{\xi}(1)} \widetilde{\mathbf{FR}}_{\xi}  \right) \right)
%
  \end{split}
  \end{equation}
  }
Using the identity \eqref{eq:identity_001} gives 
  {
 \small
 \begin{equation}\label{eq:Laplace_disc_elastic_pml_1A_2D_proof_2}
\begin{split}
&\left(sS_x\right)\widetilde{\mathbf{Q}}^\dagger {\mathbf{H}}\mathbf{P}^{-1}  \widetilde{\mathbf{Q}} =    \sum_{\xi = x, y}\frac{1}{2}\widetilde{\mathbf{Q}}^\dagger \left(\mathbf{A}_{\xi}\left(\mathbf{H}\mathbf{D}_{\xi}\right) - \left(\mathbf{H}\mathbf{D}_{\xi}\right)^T\mathbf{A}_{\xi} \right)\widetilde{\mathbf{Q}} +  \widetilde{\mathbf{Q}}^\dagger {\mathbf{H}}\mathbf{P}^{-1}  \widetilde{\mathbf{F}}  \\
- &\sum_{\xi = x, y}\frac{\Delta{x}}{2} \frac{\Delta{y}}{2}  \frac{\Delta{z}}{2} \frac{2}{\Delta{\xi}} \sum_{i=1}^{P+1}\sum_{k= 1}^{P+1}\left(\sum_{\eta = x,y,z} \left(\frac{1}{Z_\eta  }|\widetilde{G}_\eta |^2  - \widehat{\widetilde{T}}_\eta^* \widehat{\widetilde{v}}_\eta \right)\Big|_{ 1} +   \sum_{\eta = x,y,z} \left(\frac{1}{Z_\eta  }|\widetilde{G}_\eta |^2  + \widehat{\widetilde{T}}_\eta^* \widehat{\widetilde{v}}_\eta \right)\Big|_{-1}\right)_{ik} h_ih_k 
  \end{split}
  \end{equation}
  }
  Next we add \eqref{eq:Laplace_disc_elastic_pml_1A_2D_proof_2} to its complex conjugate, and the spatial derivative terms vanish, we have
   {
 \small
 \begin{equation}\label{eq:Laplace_disc_elastic_pml_1A_2D_proof_3}
\begin{split}
&\Re\left(sS_x\right)\widetilde{\mathbf{Q}}^\dagger {\mathbf{H}}\mathbf{P}^{-1}  \widetilde{\mathbf{Q}} = 
\frac{1}{2}\left(  \widetilde{\mathbf{Q}}^\dagger {\mathbf{H}}\mathbf{P}^{-1}  \widetilde{\mathbf{F}}  +  \widetilde{\mathbf{F}}^\dagger {\mathbf{H}}\mathbf{P}^{-1}  \widetilde{\mathbf{Q}} \right)\\
-& \sum_{\xi = x, y}\frac{\Delta{x}}{2} \frac{\Delta{y}}{2}  \frac{\Delta{z}}{2} \frac{2}{\Delta{\xi}} \sum_{i=1}^{P+1}\sum_{k= 1}^{P+1}\left(\sum_{\eta = x,y,z} \left(\frac{1}{Z_\eta  }|\widetilde{G}_\eta |^2  - \widehat{\widetilde{T}}_\eta^* \widehat{\widetilde{v}}_\eta \right)\Big|_{ 1} +   \sum_{\eta = x,y,z} \left(\frac{1}{Z_\eta  }|\widetilde{G}_\eta |^2  + \widehat{\widetilde{T}}_\eta^* \widehat{\widetilde{v}}_\eta \right)\Big|_{-1}\right)_{ik} h_ih_k 
%
  \end{split}
  \end{equation}
  }
  Using Cauchy-Schwarz inequality for the source term in \eqref{eq:Laplace_disc_elastic_pml_1A_2D_proof_3} and collecting contributions from both sides of the elements completes the proof.
\end{proof}

\section{Proof of Theorem \ref {theo:PML_3D_Corner_Laplace_Disc}}\label{sec:proof_theorem9} 
We introduce the boundary terms
 \begin{align*}
 \mathcal{BT}_s\left(\widehat{\widetilde{v}}_{\eta}^{-}, \widehat{\widetilde{T}}_{\eta}^{-}\right)  = &  \frac{\Delta{x}}{2}  \frac{\Delta{y}}{2} \sum_{i = 1}^{P+1}\sum_{k = 1}^{P+1}\sum_{\eta = x,y,z}\left(\left( \widehat{\widetilde{T}}_\eta^{-*} \widehat{\widetilde{v}}_\eta^{-}  \right)\Big|_{s = 1} - \left( \widehat{\widetilde{T}}_\eta^{-*} \widehat{\widetilde{v}}_\eta^{-}  \right)\Big|_{s = -1}\right)_{i k} h_{i} h_{k} \\
 +&  \frac{\Delta{x}}{2}  \frac{\Delta{z}}{2} \sum_{i = 1}^{P+1}\sum_{k = 1}^{P+1}\sum_{\eta = x,y,z}\left(\left( \widehat{\widetilde{T}}_\eta^{-*} \widehat{\widetilde{v}}_\eta^{-}  \right)\Big|_{r = 1} - \left( \widehat{\widetilde{T}}_\eta^{-*} \widehat{\widetilde{v}}_\eta^{-}  \right)\Big|_{r = -1}\right)_{i k} h_{i} h_{k}\\
&- \frac{\Delta{y}}{2}  \frac{\Delta{z}}{2}\sum_{i = 1}^{P+1}\sum_{k = 1}^{P+1}\sum_{\eta = x,y,z}\left(\left( \widehat{\widetilde{T}}_\eta^{-*} \widehat{\widetilde{v}}_\eta^{-}   \right)\Big|_{q = -1}\right)_{i k} h_{i} h_{k} \le 0,
 \end{align*}
 \begin{align*}
\mathcal{BT}_s\left(\widehat{\widetilde{v}}_{\eta}^{+}, \widehat{\widetilde{T}}_{\eta}^{+}\right)  = &  \frac{\Delta{x}}{2}  \frac{\Delta{y}}{2} \sum_{i = 1}^{P+1}\sum_{k = 1}^{P+1}\sum_{\eta = x,y,z}\left(\left( \widehat{\widetilde{T}}_\eta^{+*} \widehat{\widetilde{v}}_\eta^{+}  \right)\Big|_{s = 1} - \left( \widehat{\widetilde{T}}_\eta^{+*} \widehat{\widetilde{v}}_\eta^{+}  \right)\Big|_{s = -1}\right)_{i k} h_{i} h_{k} \\
+ &  \frac{\Delta{x}}{2}  \frac{\Delta{z}}{2} \sum_{i = 1}^{P+1}\sum_{k = 1}^{P+1}\sum_{\eta = x,y,z}\left(\left( \widehat{\widetilde{T}}_\eta^{+*} \widehat{\widetilde{v}}_\eta^{+}  \right)\Big|_{r = 1} - \left( \widehat{\widetilde{T}}_\eta^{+*} \widehat{\widetilde{v}}_\eta^{+}  \right)\Big|_{r = -1}\right)_{i k} h_{i} h_{k} \\
&+ \frac{\Delta{y}}{2}  \frac{\Delta{z}}{2}\sum_{i = 1}^{P+1}\sum_{k = 1}^{P+1}\sum_{\eta = x,y,z}\left(\left( \widehat{\widetilde{T}}_\eta^{+*} \widehat{\widetilde{v}}_\eta^{+}   \right)\Big|_{q = 1}\right)_{i k} h_{i} h_{k} \le 0,
 \end{align*}
  the interface term
  \begin{align*}
  \mathcal{IT}_s\left(\widehat{\widetilde{v}}^{\pm}, \widehat{\widetilde{T}}^{\pm} \right) = - \frac{\Delta{y}}{2}  \frac{\Delta{z}}{2}\sum_{i = 1}^{P+1}\sum_{k = 1}^{P+1}\sum_{\eta = x,y,z}\left({\widehat{\widetilde{T}}}_\eta  \lJump{{\widehat{\widetilde{v}}}_\eta \rJump}\right)_{i k} h_{i} h_{k} \equiv 0,
\end{align*}
   and the fluctuation term
    \begin{align*}
  \mathcal{F}_{luc}\left(\widetilde{G},  Z\right) = - \sum_{\xi = x, y, z}\frac{\Delta{x}}{2} \frac{\Delta{y}}{2}  \frac{\Delta{z}}{2} \frac{2}{\Delta{\xi}}\sum_{\eta = x, y, z}\sum_{i = 1}^{P+1}\sum_{k = 1}^{P+1}\left(\left(\frac{1}{Z_{\eta}}|\widetilde{G}_\eta |^2\right)\Big|_{ -1} + \left(\frac{1}{Z_{\eta}}|\widetilde{G}_\eta |^2\right)\Big|_{ 1} \right)_{i, k} \le 0.
  \end{align*}
\begin{proof}
From the left,  multiply \eqref{eq:Laplace_disc_elastic_pml_1A_3D} with $\widetilde{\mathbf{Q}}^\dagger {\mathbf{H}}$, we have
  {
 \small
 \begin{equation}\label{eq:Laplace_disc_elastic_pml_1A_3D_proof_1}
\begin{split}
\left(sS_x\right)\widetilde{\mathbf{Q}}^\dagger {\mathbf{H}}\mathbf{P}^{-1}  \widetilde{\mathbf{Q}} = &   \sum_{\xi = x, y, z}\frac{1}{2}\widetilde{\mathbf{Q}}^\dagger \left(\mathbf{A}_{\xi}\left(\mathbf{H}\mathbf{D}_{\xi}\right) - \left(\mathbf{H}\mathbf{D}_{\xi}\right)^T\mathbf{A}_{\xi} \right)\widetilde{\mathbf{Q}}  + \widetilde{\mathbf{Q}}^\dagger {\mathbf{H}}\mathbf{P}^{-1}  \widetilde{\mathbf{F}} \\
+ & \widetilde{\mathbf{Q}}^\dagger\mathbf{H}\mathbf{H}_{\xi}^{-1}\left(\frac{1}{2}\mathbf{A}_{x}\left( \mathbf{B}_{\xi}\left(1,1\right) -  \mathbf{B}_{\xi}\left(-1,-1\right)\right)\widetilde{\mathbf{Q}} -\left({\mathbf{e}_{\xi}(-1)} \widetilde{\mathbf{FL}}_{\xi} +  {\mathbf{e}_{\xi}(1)} \widetilde{\mathbf{FR}}_{\xi}  \right) \right)
  \end{split}
  \end{equation}
  }
Using the identity \eqref{eq:identity_001} gives 
  {
 \small
 \begin{equation}\label{eq:Laplace_disc_elastic_pml_1A_3D_proof_2}
\begin{split}
&\left(sS_x\right)\widetilde{\mathbf{Q}}^\dagger {\mathbf{H}}\mathbf{P}^{-1}  \widetilde{\mathbf{Q}} =    \sum_{\xi = x, y, z}\frac{1}{2}\widetilde{\mathbf{Q}}^\dagger \left(\mathbf{A}_{\xi}\left(\mathbf{H}\mathbf{D}_{\xi}\right) - \left(\mathbf{H}\mathbf{D}_{\xi}\right)^T\mathbf{A}_{\xi} \right)\widetilde{\mathbf{Q}} +  \widetilde{\mathbf{Q}}^\dagger {\mathbf{H}}\mathbf{P}^{-1}  \widetilde{\mathbf{F}}  \\
- &\sum_{\xi = x, y}\frac{\Delta{x}}{2} \frac{\Delta{y}}{2}  \frac{\Delta{z}}{2} \frac{2}{\Delta{\xi}} \sum_{i=1}^{P+1}\sum_{k= 1}^{P+1}\left(\sum_{\eta = x,y,z} \left(\frac{1}{Z_\eta  }|\widetilde{G}_\eta |^2  - \widehat{\widetilde{T}}_\eta^* \widehat{\widetilde{v}}_\eta \right)\Big|_{ 1} +   \sum_{\eta = x,y,z} \left(\frac{1}{Z_\eta  }|\widetilde{G}_\eta |^2  + \widehat{\widetilde{T}}_\eta^* \widehat{\widetilde{v}}_\eta \right)\Big|_{-1}\right)_{ik} h_ih_k 
  \end{split}
  \end{equation}
  }
  Next we add \eqref{eq:Laplace_disc_elastic_pml_1A_3D_proof_2} to its complex conjugate, and the spatial derivative terms vanish, we have
   {
 \small
 \begin{equation}\label{eq:Laplace_disc_elastic_pml_1A_3D_proof_3}
\begin{split}
&\Re\left(sS_x\right)\widetilde{\mathbf{Q}}^\dagger {\mathbf{H}}\mathbf{P}^{-1}  \widetilde{\mathbf{Q}} = 
\frac{1}{2}\left(  \widetilde{\mathbf{Q}}^\dagger {\mathbf{H}}\mathbf{P}^{-1}  \widetilde{\mathbf{F}}  +  \widetilde{\mathbf{F}}^\dagger {\mathbf{H}}\mathbf{P}^{-1}  \widetilde{\mathbf{Q}} \right)\\
-& \sum_{\xi = x, y}\frac{\Delta{x}}{2} \frac{\Delta{y}}{2}  \frac{\Delta{z}}{2} \frac{2}{\Delta{\xi}} \sum_{i=1}^{P+1}\sum_{k= 1}^{P+1}\left(\sum_{\eta = x,y,z} \left(\frac{1}{Z_\eta  }|\widetilde{G}_\eta |^2  - \widehat{\widetilde{T}}_\eta^* \widehat{\widetilde{v}}_\eta \right)\Big|_{ 1} +   \sum_{\eta = x,y,z} \left(\frac{1}{Z_\eta  }|\widetilde{G}_\eta |^2  + \widehat{\widetilde{T}}_\eta^* \widehat{\widetilde{v}}_\eta \right)\Big|_{-1}\right)_{ik} h_ih_k 
%
  \end{split}
  \end{equation}
  }
  Using Cauchy-Schwarz inequality for the source term in \eqref{eq:Laplace_disc_elastic_pml_1A_3D_proof_3} and collecting contributions from both sides of the elements completes the proof.
\end{proof}


\end{document}